\documentclass[12pt]{amsart}
\usepackage[english]{babel}
\usepackage{amsmath}
\usepackage{amsthm}
\usepackage{anysize}
\usepackage{mathtools}
\usepackage{mathrsfs}
\usepackage{amscd}
\usepackage{amsfonts}
\usepackage{amssymb}
\usepackage{amssymb}
\usepackage{xcolor}
\usepackage{xfrac}
\usepackage{esint}
\usepackage{graphicx}
\usepackage{float}
\usepackage{array}
\usepackage{enumitem}
\usepackage{tikz-cd}
\usepackage{centernot}
\usepackage{stmaryrd}
\usepackage[symbol]{footmisc}
\usepackage{comment}
\excludecomment{confidential}

\numberwithin{equation}{section}

\newtheorem{theorem}{Theorem}[section]
\newtheorem{corollary}[theorem]{Corollary}
\newtheorem{lemma}[theorem]{Lemma}
\newtheorem{proposition}[theorem]{Proposition}
\newtheorem{definition}[theorem]{Definition}

\DeclareMathOperator{\Alg}{Alg}
\DeclareMathOperator{\Ker}{Ker}

\DeclareMathOperator{\Spec}{Spec}

\begin{document}
	\title{Axiomatization of the degree of Fitzpatrick,  Pejsachowicz and  Rabier}
	\author{Juli\'an L\'opez-G\'omez, Juan Carlos Sampedro}
	\thanks{The authors have been supported by the Research Grant PGC2018-097104-B-I00 of the Spanish Ministry of Science, Technology and Universities and by the Institute of Interdisciplinar Mathematics of Complutense University. The second author has been also supported by PhD Grant PRE2019\_1\_0220 of the Basque Country Government.}
	
	\address{Institute of Interdisciplinary Mathematics (IMI) \\
Department of Analysis and Applied Mathematics \\
		Complutense University of Madrid \\
		28040-Madrid \\
		Spain.}
	\email{Lopez$\_$Gomez@mat.ucm.es, juancsam@ucm.es}

\begin{abstract}
In this paper we prove an analogue of the uniqueness theorems of F\"uhrer \cite{Fu} and Amann and Weiss \cite{AW} to cover the degree of Fredholm operators of index zero of Fitzpatrick,  Pejsachowicz and  Rabier \cite{FPR}, whose range of applicability is substantially wider than for the most classical degrees of
Brouwer \cite{Br} and Leray--Schauder \cite{LS}. A crucial step towards the axiomatization of
the Fitzpatrick--Pejsachowicz--Rabier degree is provided by the generalized algebraic multiplicity of Esquinas and L\'{o}pez-G\'{o}mez \cite{ELG,Es,LG01}, $\chi$, and the axiomatization theorem
of Mora-Corral \cite{MC,LGMC}. The latest result facilitates  the axiomatization of the parity of Fitzpatrick and Pejsachowicz \cite{FP2}, $\sigma(\cdot,[a,b])$, which provides  the key step for establishing
the uniqueness of the degree for Fredholm maps.
\end{abstract}
	
	\keywords{Degree for Fredholm maps, uniqueness, axiomatization, normalization, generalized
additivity, homotopy invariance, generalized algebraic multiplicity, parity, orientability. }
	\subjclass[2020]{ 47H11, 47A53, 55M25.}
	
		\maketitle

\section{Introduction}

\noindent The Leray--Schauder degree was introduced in \cite{LS} to get some rather pioneering existence results on Nonlinear Partial Differential Equations. It refines, very substantially, the finite-dimensional degree introduced by Brouwer \cite{Br} to prove his celebrated fixed point theorem.
The Leray--Schauder degree is a generalized topological counter of the number of zeros that a continuous map, $f$, can have on an open bounded subset, $\Omega$, of a real Banach space, $X$. To be  defined,  $f$ must be a compact perturbation of the identity map. Although this always occurs in finite-dimensional settings, it fails to be true in many important applications where the involved operators are not compact perturbations of the identity map but Fredholm operators of index zero between two Banach spaces $X$ and $Y$.  For Fredholm maps it is available the degree of Fredholm maps of Fitzpatrick,  Pejsachowicz and  Rabier \cite{FPR}, a refinement of the Elworthy and Tromba degree, based on the topological concepts of parity and orientation discussed by Fitzpatrick and Pejsachowicz in \cite{FP2}. Very recently, the authors of this article established in \cite{LSA} the hidden relationships between the degree for Fredholm maps of \cite{FPR} and the concept of generalized algebraic multiplicity of Esquinas and L\'opez-G\'omez in \cite{ELG,Es,LG01}, in a similar manner as the Schauder formula relates the Leray--Schauder degree to  the classic algebraic multiplicity. The main goal of this paper is axiomatizing the Fitzpatrick--Pejsachowicz--Rabier degree in the same vain as
the  Brouwer and Leray--Schauder degrees were axiomatized by F\"uhrer \cite{Fu} and Amann and Weiss \cite{AW}, respectively. In other words, we will give a  minimal set of properties that characterize the topological degree of Fitzpatrick, Pejsachowicz and Rabier.
\par
Throughout this paper, for any given pair of real Banach spaces $X, Y$ with $X\subset Y$, we denote by $\mathcal{L}_c(X,Y)$ the set of linear and continuous operators, $L \in \mathcal{L}(X,Y)$, that are a compact perturbation of the identity map, $L=I_X-K$. Then, the \emph{linear group}, $GL(X,Y)$ is defined
as the set of linear isomorphisms $L\in \mathcal{L}(X,Y)$. Similarly, the \emph{compact linear group},  $GL_c(X,Y)$, is defined  as $GL(X,Y)\cap \mathcal{L}_c(X,Y)$. For  any $L \in \mathcal{L}(X,Y)$, the sets $N[L]$ and $R[L]$ stand for the null space (kernel) and the range (image) of $L$, respectively.
An operator $L \in \mathcal{L}(X,Y)$ is said to be a  Fredholm operator if
$$
  \mathrm{dim\,}N[L]<\infty \quad \hbox{and}\quad \mathrm{codim\,}R[T]<\infty.
$$
In such case, $R[L]$ must be closed and the index of $L$ is defined by
$$
  \mathrm{ind\,} L := \mathrm{dim\,}N[L]-\mathrm{codim\,}R[L].
$$
In this paper, the set of Fredholm operators of index zero,  $L \in \mathcal{L}(X,Y)$, is denoted by $\Phi_0(X,Y)$. We also set $\Phi_0(X):=\Phi_0(X,X)$.
\par
In the context of the Leray--Schauder degree, for any pair of real Banach spaces $X, Y$ such that $X\subset Y$, any
open and bounded domain $\Omega\subset X$ and any map $f:\overline{\Omega}\subset X \to  Y$, it is said that $(f,\Omega)$ is an \emph{admissible pair} if:
	\begin{enumerate}
		\item[i)] $f\in \mathcal{C}(\overline{\Omega},Y)$;
		\item[ii)] $f$ is a compact perturbation of the identity map $I_X$;
		\item[iii)] $0\notin f(\partial\Omega)$.
	\end{enumerate}
	The class of admissible pairs will be denoted by $\mathscr{A}_{LS}$. Note that $(I_X,\Omega)\in \mathscr{A}_{LS}$ for every open and bounded subset $\Omega\subset X$ such that $0\notin\partial\Omega$. Actually, $(I_X,\Omega)\in \mathscr{A}_{LS,GL_c}$, where $\mathscr{A}_{LS,GL_c}$ stands for the
set of admissible pairs $(L,\Omega)\in \mathscr{A}_{LS}$ such that $L\in GL_c(X,Y)$. The next fundamental theorem establishes the existence and the uniqueness of the Leray--Schauder degree. The existence goes back to Leray and Schauder \cite{LS} and the uniqueness is attributable to Amann and Weiss \cite{AW},
though  F\"uhrer \cite{Fu} had already proven the uniqueness of the Brower degree when \cite{AW} was
published.
	
\begin{theorem}
\label{th1.1}
For any given pair of real Banach spaces, $X, Y$ such that $X\subset Y$, there exists a unique integer valued map, $\deg_{LS}:\mathscr{A}_{LS} \to \mathbb{Z}$, satisfying the following properties:
\begin{enumerate}
\item[{\rm (N)}]  \textbf{Normalization:} $\deg_{LS}(I_X,\Omega)=1$ if $0\in\Omega$.
			
\item[{\rm (A)}]  \textbf{Additivity:} For every $(f,\Omega)\in\mathscr{A}_{LS}$ and any pair of open disjoint subsets, $\Omega_{1}$ and $\Omega_{2}$, of $\Omega$ such that $0\notin f(\overline{\Omega}\backslash(\Omega_{1}\uplus \Omega_{2}))$,
\begin{equation}
\label{1.1}
			\deg_{LS}(f,\Omega)=\deg_{LS}(f,\Omega_{1})+\deg_{LS}(f,\Omega_{2}).
\end{equation}
			
\item[{\rm (H)}] \textbf{Homotopy Invariance:} For every homotopy $H\in \mathcal{C}([0,1]\times \overline{\Omega}, X)$ such that  $(H(t,\cdot),\Omega)\in\mathscr{A}_{LS}$ for each $t\in [0,1]$,
\begin{equation*}
			\deg_{LS}(H(0,\cdot),\Omega)=\deg_{LS}(H(1,\cdot),\Omega).
\end{equation*}
\end{enumerate}
Moreover, for every $(L,\Omega)\in\mathscr{A}_{LS,GL_c}$ with $0\in \Omega$,
\begin{equation}
\label{1.2}
		\deg_{LS}(L,\Omega)=(-1)^{\sum_{i=1}^{q}\mathfrak{m}_\mathrm{alg}[I_{X}-L;\mu_{i}]}
\end{equation}
where
\begin{equation*}
		\Spec(I_{X}-L)\cap (1,\infty)=\{\mu_{1},\mu_{2},...,\mu_{q}\}\,\qquad \mu_i\neq \mu_j\quad\hbox{if}\;\; i\neq j.
\end{equation*}
\end{theorem}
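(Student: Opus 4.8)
The plan is to prove the three assertions separately: existence by the classical finite-dimensional reduction of Leray and Schauder, the closed formula \eqref{1.2} from the axioms together with the construction, and uniqueness by reducing an arbitrary admissible pair to a finite-dimensional Brouwer problem, where F\"uhrer's theorem \cite{Fu} applies.

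\textbf{Existence.} Given $(f,\Omega)\in\mathscr{A}_{LS}$, write $f=I_X-K$ with $K$ compact and set $\rho:=\dist(0,f(\partial\Omega))>0$. Choosing a finite-rank $K_n$ with $\sup_{\overline{\Omega}}\|K_n-K\|<\rho$ and a finite-dimensional subspace $F\supseteq R[K_n]$, the zeros of $f_n:=I_X-K_n$ in $\overline{\Omega}$ all lie in $\Omega\cap F$, and one sets $\deg_{LS}(f,\Omega):=\deg_{\mathrm{B}}(f_n|_{\overline{\Omega\cap F}},\Omega\cap F)$. Independence of this value from the choices of $K_n$ and $F$ follows from the excision and reduction properties of the Brouwer degree $\deg_{\mathrm{B}}$, and (N), (A), (H) are then inherited from the corresponding properties of $\deg_{\mathrm{B}}$; this part is classical and I would merely recall it, referring to \cite{LS} and standard texts.

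\textbf{The formula \eqref{1.2}.} Let $(L,\Omega)\in\mathscr{A}_{LS,GL_c}$ with $0\in\Omega$ and $L=I_X-K$, $K$ compact. Since $L$ is an isomorphism, $1\notin\Spec(K)$, and by compactness $\Spec(K)\cap(1,\infty)=\{\mu_1,\dots,\mu_q\}$ consists of finitely many eigenvalues of finite algebraic multiplicities $m_i:=\mathfrak{m}_\mathrm{alg}[I_X-L;\mu_i]$. Put $F:=\bigoplus_{i=1}^{q}N[(K-\mu_iI_X)^{\nu_i}]$, with $\nu_i$ the ascent of $\mu_i$; then $F$ is a finite-dimensional $K$-invariant subspace with closed $K$-invariant complement $E$, so that $X=F\oplus E$, $\Spec(K|_E)\cap[1,\infty)=\emptyset$, and $L|_F=I_F-K|_F$ is invertible. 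Let $P_F,P_E$ be the associated projections. By excision, $\deg_{LS}(L,\Omega)=\deg_{LS}(L,B_r(0))$ for $r>0$ small, and
\begin{equation*}
H(t,x):=x-K(P_Fx)-(1-t)\,K(P_Ex),\qquad t\in[0,1],
\end{equation*}
is an admissible homotopy on $B_r(0)$, because $I_E-sK|_E$ is invertible for every $s\in[0,1]$; hence $\deg_{LS}(L,B_r(0))=\deg_{LS}(I_X-K(P_F\,\cdot\,),B_r(0))$. As $K(P_F\,\cdot\,)$ is finite-rank with range in $F$, the construction of $\deg_{LS}$ gives $\deg_{LS}(I_X-K(P_F\,\cdot\,),B_r(0))=\deg_{\mathrm{B}}(I_F-K|_F,B_r(0)\cap F)=\sign\det(I_F-K|_F)$, and since $\det(I_F-K|_F)=\prod_{i=1}^{q}(1-\mu_i)^{m_i}$ has each factor negative, its sign is $(-1)^{\sum_i m_i}$, which is \eqref{1.2}.

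\textbf{Uniqueness.} Let $d:\mathscr{A}_{LS}\to\Z$ satisfy (N), (A), (H). Taking $\Omega_1=\Omega_2=\emptyset$ in (A) gives $d(f,\emptyset)=0$; hence (A) yields the excision property $d(f,\Omega)=d(f,\Omega_0)$ whenever $\Omega_0\subseteq\Omega$ is open with $0\notin f(\overline{\Omega}\setminus\Omega_0)$, and in particular $d(f,\Omega)\neq0$ forces $f^{-1}(0)\cap\Omega\neq\emptyset$. Given $(f,\Omega)$ with $f=I_X-K$, pick a finite-rank $K_n$ with $\sup_{\overline{\Omega}}\|K_n-K\|<\dist(0,f(\partial\Omega))$; the affine homotopy $I_X-[(1-t)K+tK_n]$ is admissible, so $d(f,\Omega)=d(I_X-K_n,\Omega)$, and the homotopy $x\mapsto x-K_n\bigl(x-s(I_X-P_F)x\bigr)$, whose zero set is independent of $s$, further reduces this to computing $d$ for the map $x\mapsto (I_X-P_F)x+g(P_Fx)$ on $\Omega$, where $g:=I_F-K_n|_F:F\to F$. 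For admissible pairs of this form the zero set lies in $F$ and equals $g^{-1}(0)$, and one checks that $d$ induces, by restriction to $\Omega\cap F$, an integer-valued degree on finite-dimensional domains satisfying F\"uhrer's normalization, additivity and homotopy invariance; by \cite{Fu} it therefore coincides with $\deg_{\mathrm{B}}$, whence $d(f,\Omega)=\deg_{\mathrm{B}}(g,\Omega\cap F)=\deg_{LS}(f,\Omega)$.

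\textbf{Main obstacle.} The crux is the finite-dimensional reduction in the uniqueness step: passing from the value $d$ assigns to a pair living on the infinite-dimensional space $X$ to a genuine Brouwer degree on $\Omega\cap F$, using nothing beyond (N), (A), (H). This requires trapping and isolating the zero set inside $\Omega\cap F$ by excision, verifying that the induced finite-dimensional object is well defined and independent of the ambient subspace (a consequence of additivity together with the excision/reduction arguments above), and checking that every homotopy and deformation performed inside $F$ extends to an admissible homotopy on $\overline{\Omega}$ — which is precisely where the hypothesis $X\subseteq Y$ and the compact-perturbation normal form $I_X-K$ are used. Once this reduction is in place, the only remaining input is the already classical uniqueness of the Brouwer degree.
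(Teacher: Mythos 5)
Your proposal is correct and is precisely the classical argument: the paper itself offers no proof of Theorem \ref{th1.1}, stating it as known and attributing existence to Leray--Schauder \cite{LS} and uniqueness to Amann--Weiss \cite{AW} (building on F\"uhrer \cite{Fu}), which is exactly the route you follow. Your derivation of \eqref{1.2} via the Riesz decomposition $X=F\oplus E$ and the block-diagonal homotopy is sound, and you have correctly identified the genuine content of the uniqueness step --- showing that the axioms force the value on finite-rank perturbations to be an induced Brouwer degree on $\Omega\cap F$, well defined and independent of the ambient reduction --- which is where the work of \cite{AW} lies.
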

	
The map $\deg_{LS}$  is refereed to as the \emph{Leray--Schauder degree}. In \eqref{1.2},
setting $K:= I_X-L$, for any eigenvalue $\mu\in \Spec(K)$, we have denoted by $\mathfrak{m}_\mathrm{alg}[K;\mu]$ the classical algebraic multiplicity of $\mu$, i.e.,
\[
	\mathfrak{m}_{\mathrm{alg}}[K;\mu]= \mathrm{dim\,}\mathrm{Ker}[(\mu I_{X}-K)^{\nu(\mu)}],
\]
where $\nu(\mu)$ is the \emph{algebraic ascent} of $\mu$, i.e. the minimal integer, $\nu\geq 1$, such that
\[
	\mathrm{Ker}[(\mu-K)^{\nu}]=\mathrm{Ker}[(\mu I_{X}-K)^{\nu+1}].
\]
In Theorem \ref{th1.1}, the axiom (N) is called the \emph{normalization property} because, for every $n\in\mathbb{Z}$, the map $n\, \mathrm{deg}_{LS}$ also satisfies the axioms (A) and (H), though not (N). Thus, the axiom (N) normalizes the degree so that, for the identity map, it provides us with its  exact number of zeroes.  The axiom (A) packages three basic properties of the Leray--Schauder degree.  Indeed, by choosing $\Omega=\Omega_1=\Omega_2=\emptyset$, it becomes apparent that
\begin{equation}
	\label{1.3}
	\mathrm{deg}_{LS}(f,\emptyset)=0,
\end{equation}
so establishing that no continuous map can admit a zero in the empty set. Moreover, in the special case when $\Omega = \Omega_{1}\uplus \Omega_{2}$, \eqref{1.2} establishes the \emph{additivity property} of
the degree. Finally, in the special case when $\Omega_2=\emptyset$, it follows from \eqref{1.2}
and \eqref{1.3} that
\[
	\mathrm{deg}_{LS}(f,\Omega)= \mathrm{deg}_{LS}(f,\Omega_1),
\]
which is usually refereed to as the \emph{excision property} of the degree. If, in addition,
also $\Omega_1=\emptyset$, then
\[
	\mathrm{deg}_{LS}(f,\Omega)= 0 \quad \hbox{if}\;\; f^{-1}(0)\cap \overline{\Omega} =\emptyset.
\]
Therefore, for every $(f,\Omega)\in\mathscr{A}_{LS}$ such that $\deg_{LS}(f,\Omega)\neq 0$, the
equation $f(x)=0$ admits, at least, a solution in $\Omega$. This key property is
refereed to as the \emph{fundamental, or solution, property} of the degree.
\par
The axiom (H) establishes the \emph{invariance by homotopy} of the  degree.
It allows to calculate the degree in the practical situations of interest from the point of view of the applications. Not surprisingly, when dealing with analytic maps, $f$,  in $\mathbb{C}$, it provides us with the exact number of zeroes of $f$, counting orders, in $\Omega$ (see, e.g., Chapter 11 of \cite{LGCV}).
\par
From a geometrical point of view, the construction of the Leray--Schauder degree relies on the concept of orientation for $X=Y$. Let $H\in \mathcal{C}([0,1]\times\overline{\Omega},X)$ be a homotopy with $(H(t,\cdot),\Omega)\in\mathscr{A}_{LS,GL_c}$ for each $t\in [0,1]$. Since $H$ can be regarded as the continuous path  $\mathfrak{L}\in \mathcal{C}([0,1], GL_{c}(X))$ defined by
$\mathfrak{L}(t):=H(\cdot,t)$, $t\in [0,1]$, by the axiom (H), the integer $\deg_{LS}(\mathfrak{L}(t),\Omega)$ is constant for all $t\in[0,1]$. This introduces an equivalence relation between the operators of $GL_c(X)$. Indeed, for every pair of operators $L_0$, $L_1\in GL_c(X)$, it is said that $L_0 \sim L_1$ if $L_0$ and $L_1$ are homotopic in $\mathscr{A}_{LS,GL_c}$ in the sense that $L_0=\mathfrak{L}(0)$ and $L_1= \mathfrak{L}(1)$ for some curve
$\mathfrak{L}\in \mathcal{C}([0,1], GL_{c}(X))$. This equivalence relation divides $GL_{c}(X)$ into two path connected components, $GL^{+}_{c}(X)$ and $GL^{-}_{c}(X)$, separated away by $\mathcal{S}(X)\cap GL_c(X)$, where
\[
\mathcal{S}(X):=\mathcal{L}(X)\setminus GL(X).
\]
Conversely, if $GL^{+}_{c}(X)$  stands for the path connected component of $GL_c(X)$ containing $I_X$, the fact that a given operator $L\in GL_c(X)$ belongs to one component, or another, defines an \emph{orientation} on $L$. This allows us to define a map,
\begin{equation}
\label{b.4}
\deg_{LS}(L,\Omega):=\left\{\begin{array}{ll}
1 & \text{ if } L\in GL^{+}_{c}(X)\;\;\hbox{and}\;\; 0\in \Omega, \\
-1 & \text{ if } L\in GL^{-}_{c}(X)\;\;\hbox{and}\;\; 0\in \Omega,\\
0 & \text{ if } L\in GL_{c}(X)\;\;\hbox{and}\;\; 0\notin \Omega,
\end{array}\right.
\end{equation}
verifying the three axioms of the Leray--Schauder degree in the class $\mathscr{A}_{LS,GL_c}$ and, in particular, the homotopy invariance. Once defined the degree in $\mathscr{A}_{LS,GL_c}$, one can extend this restricted concept of degree to the regular pairs $(f,\Omega)\in\mathscr{R}_{LS}$ through the identity
$$
\deg_{LS}(f,\Omega)=\sum_{x\in f^{-1}(0)\cap\Omega}\deg_{LS}(Df(x),\Omega).
$$
Finally, according to the Sard--Smale theorem and the homotopy invariance property, it can be extended to be defined for general admisible pairs, $(f,\Omega)\in\mathscr{A}_{LS}$. A crucial feature that
facilitates this construction of the degree is the fact that the space $GL_{c}(X)$ consists of two path-connected components. Thus, it admits an orientation. This fails to be true in the general context of Fredholm operators of index zero, which makes the mathematical analysis of this paper much more sophisticated technically.
\par
The main goal of this paper is establishing an analogous of Theorem \ref{th1.1} for Fredholm Operators of index zero within the context of the degree for Fredholm maps of Fitzpatrick,  Pejsachowicz and  Rabier \cite{FPR}. Let $\Omega$ be an open and bounded subset of a real Banach space $X$. Then, an operator $f:\overline{\Omega}\subset X \to Y$ is said to be $\mathcal{C}^{1}$-\emph{Fredholm of index zero} if
\begin{equation*}
f\in \mathcal{C}^{1}(\overline{\Omega},Y)\quad \hbox{and}\quad
Df\in \mathcal{C}(\overline{\Omega},\Phi_{0}(X,Y)).
\end{equation*}
In  this paper, the set of all these operators is denoted by $\mathscr{F}^{1}_{0}(\Omega,Y)$.
A given operator $f\in\mathscr{F}^{1}_{0}(\Omega,Y)$ is said to be \emph{orientable} when the image  $Df(\Omega)$ is an orientable subset of $\Phi_{0}(X,Y)$ (see Section 3 for the concept of orientability).  Moreover, for any open and bounded subset, $\Omega$, of $X$ and any map $f:\overline{\Omega}\subset X \to Y$ satisfying 	
\begin{enumerate}
	\item $f\in \mathscr{F}^{1}_{0}(\Omega,Y)$ is \emph{orientable} with orientation $\varepsilon$,
	\item $f$ is \emph{proper} in $\overline{\Omega}$, i.e., $f^{-1}(K)$ is compact for every compact subset $K\subset Y$,
	\item $0\notin f(\partial \Omega)$,
\end{enumerate}
it will be said that $(f,\Omega,\varepsilon)$ is a \emph{Fredholm admissible tern}. The set of all
Fredholm admissible terns in the context of Fitzpatrick,  Pejsachowicz and  Rabier \cite{FPR}
is denoted by $\mathscr{A}$. Given $(f,\Omega,\varepsilon)\in \mathscr{A}$,
it is said that $(f,\Omega,\varepsilon)$ is a regular tern if $0$ is a regular value of $f$, i.e.,
$Df(x)\in GL(X,Y)$ for all $x \in f^{-1}(0)$. The set of regular terns is denoted by $\mathscr{R}$.
Lastly, a map $H\in \mathcal{C}^{1}([0,1]\times \overline{\Omega},Y)$ is said to be a    $\mathcal{C}^{1}$-\emph{Fredholm homotopy} if $D_{x}H(t,\cdot)\in\Phi_{0}(X,Y)$ for each $t\in[0,1]$, and it is called \textit{orientable} if $D_{x}H([0,1]\times \Omega)$ is an orientable subset of $\Phi_{0}(X,Y)$. The main theorem of this paper reads  as follows.

\begin{theorem}
\label{th1.2}
There exists a unique integer valued map
$$
   \deg: \mathscr{A} \to \mathbb{Z}
$$
satisfying the next properties:
\begin{enumerate}
\item[{\rm (N)}] \textbf{Normalization:} $\deg(L,\Omega,\varepsilon)=\varepsilon(L)$ for all
		$L\in GL(X,Y)$ if $0\in \Omega$.
		
\item[{\rm (A)}] \textbf{Additivity:} For every $(f,\Omega,\varepsilon)\in\mathscr{A}$  and any
		pair of disjoint open subsets $\Omega_{1}$ and $\Omega_{2}$ of $\Omega$ with $0\notin f(\Omega\backslash (\Omega_{1}\uplus \Omega_{2}))$,
\begin{equation*}
 \deg(f,\Omega,\varepsilon)=\deg(f,\Omega_{1},\varepsilon)+\deg(f,\Omega_{2},\varepsilon).
\end{equation*}
		
\item[{\rm (H)}] \textbf{Homotopy Invariance:} For each proper $\mathcal{C}^{1}$-Fredholm homotopy
		$H\in \mathcal{C}^{1}([0,1]\times \overline{\Omega}, Y)$ with orientation $\varepsilon$ such that $(H(t,\cdot),\Omega,\varepsilon_{t})\in\mathscr{A}$ for each $t\in[0,1]$
\begin{equation*}
		\deg(H(0,\cdot),\Omega,\varepsilon_{0})=\deg(H(1,\cdot),\Omega,\varepsilon_{1}).
		\end{equation*}
\end{enumerate}

Moreover, for every $(f,\Omega,\varepsilon)\in\mathscr{R}$ satisfying $Df(\Omega)\cap GL(X,Y)\neq\emptyset$ and $L\in Df(\Omega)\cap GL(X,Y)$
\begin{equation}
\label{1.4}
\deg(f,\Omega,\varepsilon)=\varepsilon(L)\cdot \sum_{x\in f^{-1}(0)\cap \Omega} (-1)^{\sum_{\lambda_{x}\in\Sigma(\mathfrak{L}_{\omega,x})}\chi[\mathfrak{L}_{\omega,x};\lambda_{x}]}
\end{equation}
where $\mathfrak{L}_{\omega,x}\in \mathscr{C}^{\omega}([a,b],\Phi_{0}(X,Y))$ is an analytical path $\mathcal{A}$-homotopic to some path $\mathfrak{L}_{x}\in\mathcal{C}([a,b],Df(\Omega))$ joining $Df(x)$ and $L$ (see Section 3 for the precise meaning), and $\chi$ is the generalized algebraic multiplicity introduced by Esquinas and L\'{o}pez-G\'{o}mez in \cite{ELG,Es,LG01}
(see Section 2 for its definition and main properties).
\end{theorem}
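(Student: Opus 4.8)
The plan is to replay, in the Fredholm category, the Amann--Weiss proof of Theorem~\ref{th1.1}, replacing the two path-components of $GL_c(X)$ — on which the construction and the uniqueness of $\deg_{LS}$ ultimately rest — by the parity $\sigma(\cdot,[a,b])$ and its expression through the generalized algebraic multiplicity $\chi$. Existence requires no new work: the degree $\deg_{\mathrm{FPR}}$ of \cite{FPR} is defined on $\mathscr{A}$, satisfies the normalization (N) in the present form $\deg(L,\Omega,\varepsilon)=\varepsilon(L)$, and satisfies (A) and (H), the relationship between $\deg_{\mathrm{FPR}}$ and $\chi$ having already been analyzed by the authors in \cite{LSA}; one only records these facts. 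The substance of the theorem is uniqueness, and, as will be seen, \eqref{1.4} drops out of the very same argument, so that it holds simultaneously for \emph{every} degree satisfying (N), (A), (H).

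So let $\deg\colon\mathscr{A}\to\mathbb{Z}$ obey (N), (A), (H). Exactly as in the discussion following Theorem~\ref{th1.1}, axiom (A) yields $\deg(f,\emptyset,\varepsilon)=0$, the excision property and the fundamental property, whence $\deg(f,\Omega,\varepsilon)=0$ whenever $f^{-1}(0)\cap\overline{\Omega}=\emptyset$. Next, given $(f,\Omega,\varepsilon)\in\mathscr{A}$, since $f$ is proper it is a closed map, so $\dist(0,f(\partial\Omega))>0$; by the Sard--Smale theorem we may choose a regular value $y$ of $f$ with $\|y\|$ below that distance, and then $(f-y,\Omega,\varepsilon)\in\mathscr{R}$ while $H(t,\cdot):=f-ty$ is a proper orientable $\mathcal{C}^1$-Fredholm homotopy with $(H(t,\cdot),\Omega,\varepsilon)\in\mathscr{A}$ for all $t$, so by (H) we may assume from the start that $(f,\Omega,\varepsilon)\in\mathscr{R}$. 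Assume moreover $f^{-1}(0)\cap\Omega\neq\emptyset$ (otherwise both sides of \eqref{1.4} vanish). By properness and regularity $f^{-1}(0)\cap\Omega=\{x_1,\dots,x_k\}$ is finite with $Df(x_j)\in GL(X,Y)$, so $Df(\Omega)\cap GL(X,Y)\neq\emptyset$; fix $L\in Df(\Omega)\cap GL(X,Y)$. Choosing pairwise disjoint balls $B_j:=B_\rho(x_j)\subset\Omega$ with $f^{-1}(0)\cap\overline{B_j}=\{x_j\}$, (A) and excision give $\deg(f,\Omega,\varepsilon)=\sum_{j=1}^{k}\deg(f,B_j,\varepsilon)$.

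For each $j$ and $\rho$ small, the affine homotopy $H(t,y):=(1-t)f(y)+t\,Df(x_j)(y-x_j)$ on $\overline{B_j}$ is a proper (a standard bi-Lipschitz estimate around $x_j$), admissible, orientable $\mathcal{C}^1$-Fredholm homotopy whose only zero is $x_j$ for all $t$, since $D_yH(t,\cdot)$ stays inside $GL(X,Y)$ near $x_j$ and $0$ stays off $\partial B_j$; hence (H) gives $\deg(f,B_j,\varepsilon)=\deg\big(Df(x_j)(\cdot-x_j),B_j,\varepsilon\big)$. Performing the further homotopy $y\mapsto Df(x_j)\big(y-(1-s)x_j\big)$ inside a large ball $B_R(0)\supset\overline{B_j}$ and invoking excision and (N), this equals $\varepsilon(Df(x_j))$; in particular translation invariance of the degree is itself forced by (N), (A), (H), as in the Leray--Schauder case. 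Finally, $Df(\Omega)$ being orientable with orientation $\varepsilon$, the signs $\varepsilon(Df(x_j))$ and $\varepsilon(L)$ are linked by the parity of any path $\mathfrak{L}_{x_j}\in\mathcal{C}([a,b],Df(\Omega))$ joining $Df(x_j)$ to $L$, namely $\varepsilon(Df(x_j))=\varepsilon(L)\,\sigma(\mathfrak{L}_{x_j},[a,b])$ (Section~3); replacing $\mathfrak{L}_{x_j}$ by an $\mathcal{A}$-homotopic analytic path $\mathfrak{L}_{\omega,x_j}$ and using the computation of the parity in terms of $\chi$ (Section~3, built on the Mora-Corral characterization of $\chi$), $\sigma(\mathfrak{L}_{\omega,x_j},[a,b])=(-1)^{\sum_{\lambda_{x_j}\in\Sigma(\mathfrak{L}_{\omega,x_j})}\chi[\mathfrak{L}_{\omega,x_j};\lambda_{x_j}]}$, and summing over $j$ produces \eqref{1.4}. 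Since the right-hand side of \eqref{1.4} does not involve $\deg$, any two degrees satisfying (N), (A), (H) coincide on $\mathscr{R}$, hence on all of $\mathscr{A}$ by the reduction above; together with the existence of $\deg_{\mathrm{FPR}}$ this proves the theorem.

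I expect the real difficulty to lie not in the $\deg_{LS}$-style reductions above — which are routine, the Sard--Smale step being especially mild here because translating by a small regular value preserves properness, orientability and admissibility — but in the orientation/parity dictionary of Section~3 and, above all, in the identity $\sigma(\mathfrak{L}_{\omega},[a,b])=(-1)^{\sum_{\lambda}\chi[\mathfrak{L}_{\omega};\lambda]}$, which is precisely what renders the otherwise non-orientable space $\Phi_0(X,Y)$ amenable to an axiomatic treatment and whose proof consumes the full strength of the axiomatization of $\chi$ of Mora-Corral. Thus the crux of the enterprise sits in those preparatory sections, and the proof of Theorem~\ref{th1.2} itself is the assembly just described.
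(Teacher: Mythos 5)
Your proposal is correct and follows essentially the same route as the paper's proof: reduction to the regular case by translating with a small regular value, localization at the finitely many zeros via (A), two affine homotopies reducing each local degree to $\varepsilon(Df(x_j))$ via (N), and the parity--multiplicity identity of Theorem~\ref{th3.3} to obtain \eqref{1.4}. The only differences are organizational and in the level of detail (the paper treats the regular case first and establishes properness and admissibility of the homotopies carefully via Smale's local properness theorem and the Quinn--Sard theorem, where you invoke a bi-Lipschitz estimate and Sard--Smale).
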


As in the context of the Leray--Schauder degree, the axiom (A) packages three fundamental properties of the degree. Namely, the additivity and excision properties, as well as the existence property, that is, whenever $(f,\Omega,\varepsilon)\in\mathscr{A}$ satisfies $\deg(f,\Omega,\varepsilon)\neq 0$, there exists $x\in \Omega$ such that $f(x)=0$.
\par
The existence of the map $\deg$ was established by Fitzpatrick, Pejsachowicz and Rabier in \cite{FPR} based on the concept of orientability introduced by Fitzpatrick and Pejsachowicz \cite{FP2}. The identity \eqref{1.4} is a substantial sharpening of the classical Leray--Schauder formula in the context of the degree for Fredholm maps; it was  proven by the authors in \cite{LSA}.  Thus, the main novelty of Theorem \ref{th1.2} is establishing the uniqueness of $\deg$ as a direct consequence of \eqref{1.4}; so, establishing an analogue of Theorem \ref{th1.1}. Obviously, in the context of Leray-Schauder degree, if we restrict the admissible pairs $(f,\Omega)$ to those
 of the  form $f:\overline{\Omega}\subset X\to Y$ with $f$ of class $\mathcal{C}^{1}$  (in the sequel we will denote this class by $\mathscr{A}_{LS}^{1}$), then Theorem \ref{th1.2} generalizes, very substantially, 	Theorem \ref{th1.1}.
\par
It is convenient to remark that the orientability introduced by Fitzpatrick and Pejsachowicz in \cite{FP2} deals with continuous maps $h:\Lambda\to\Phi_{0}(X,Y)$ where $\Lambda$ is a given topological space. As in this article we will adopt a more geometrical perspective, we are orienting subspaces $\Omega\subset \Phi_{0}(X,Y)$, instead of maps. However, given a continuous map $h:\Lambda\to\Phi_{0}(X,Y)$, if $h(\Lambda)\subset \Phi_{0}(X,Y)$ is orientable in our sense, necessarily $h:\Lambda\to\Phi_{0}(X,Y)$ is orientable as discussed  in \cite{FP2}. Therefore, our concept of  orientability implies the original one of \cite{FPR}. Thus, all the considerations therein apply in our setting.
\par
Benevieri and Furi \cite{BF3} have established the uniqueness of another formulation of the topological degree for Fredholm operators \cite{BF1,BF2}. In particular, by using different techniques, they introduced another concept of orientability for continuous maps $h:\Lambda\to \Phi_{0}(X,Y)$ on a  topological space $\Lambda$. When  $h$ has a regular point, i.e.,
$$
R_{h}:=\{p\in\Lambda: h(p)\in GL(X,Y)\}\neq \emptyset,
$$	
the two notions coincide in the sense that $h:\Lambda\to \Phi_{0}(X,Y)$ is orientable in the Benivieri-Furi sense (BF-orientable for short) if and only if it is Fitzpatrick-Pejsachowicz orientable (FP-orientable for short). However, when $R_{h}=\emptyset$, these two concepts are different. Although the singular maps with $R_{h}=\emptyset$ are orientable adopting the FP-orientation, there are examples of singular $h$'s that are BF-orientable, while others are not (see \cite[Section 5]{BF2} for further details). Due to this fact, the degree constructed by Benevieri and Furi does not coincide with the degree of Fitzpatrick, Pejsachowicz and Rabier, because there are admissible terns $(f,\Omega,\varepsilon)$ such that $Df:\overline{\Omega}\to \Phi_{0}(X,Y)$ is not BF-orientable. Thus, although Benevieri and Furi proved in \cite{BF3} an uniqueness
result for their degree, our Theorem \ref{th1.2} here is independent of their main uniqueness result. 
Actually, both uniqueness results are independent in the sense that no one implies the other, though 
in some important applications both degrees coincide.  However, since the algebraic multiplicity $\chi$ is defined for Fredholm operator curves $\mathfrak{L}:[a,b]\to\Phi_{0}(X,Y)$ and the orientability notion of Fitzpatrick and Pejsachowicz is defined through the use of this type of curves by means of  their notion of parity, we see far more natural the degree of Fitzpatrick, Pejsachowicz and Rabier for delivering an analogue of the uniqueness theorem of Amann and Weiss through \eqref{1.4}, within the same vain as in the classical context of the Leray--Schauder degree.
\par
The distribution of this paper is the following. Sections 2 contains some necessary preliminaries
on the Leray--Schauder degree and the generalized algebraic multiplicity, $\chi$, used in the
generalized Leray--Schauder formula \eqref{1.4}. Section 3 introduces the concepts of parity and
orientation of Fitzpatrick and  Pejsachowicz \cite{FP2} and collects some of the findings of the
authors in \cite{LSA}, where the Fitzpatrick--Pejsachowicz parity, $\sigma$,  was calculated through the generalized algebraic multiplicity $\chi$. These results are needed for axiomatizing the parity $\sigma$
in Section 4. The main result of Section 4 is Theorem \ref{th4.2}, which characterizes $\sigma$ through
a normalization property, a product formula and its invariance by homotopy, by means of the
algebraic multiplicity $\chi$. This result is reminiscent of the uniqueness theorem of Mora-Corral \cite{MC} for the multiplicity $\chi$ (see also Chapter 6 of \cite{LGMC}). Finally, based on these results, the proof of Theorem \ref{th1.2} is delivered in Section 5 after revisiting, very shortly, the main concepts of the  Fitzpatrick--Pejsachowicz--Rabier degree.

\section{Generalized Algebraic Multiplicity}

\noindent As the generalized algebraic multiplicity introduced by Esquinas and L\'{o}pez-G\'{o}mez in \cite{ELG,Es,LG01} is a pivotal technical device in the proof of Theorem \ref{th1.2} through the formula \eqref{1.4}, we will collect some of its most  fundamental properties, among them the uniqueness theorem of Mora-Corral \cite{MC,LGMC}.
\par
Given two Banach spaces, $X$ and $Y$, by a  \emph{Fredholm path, or curve,} it is meant any map $\mathfrak{L}\in \mathcal{C}([a,b],\Phi_{0}(X,Y))$.
Given a Fredholm path, $\mathfrak{L}$, it is said that $\lambda\in[a,b]$ is an \emph{eigenvalue} of $\mathfrak{L}$ if $\mathfrak{L}(\lambda)\notin GL(X,Y)$. Then, the \emph{spectrum} of $\mathfrak{L}$, $\Sigma(\mathfrak{L})$,  consists of the set of all these eigenvalues, i.e.,  	
\begin{equation*}
\Sigma(\mathfrak{L}):=\{\lambda\in[a,b]: \mathfrak{L}(\lambda)\notin GL(X,Y)\}.
\end{equation*}
According to Lemma 6.1.1 of \cite{LG01}, $\Sigma(\mathfrak{L})$ is a compact subset of $[a,b]$, though,
in general, one cannot say anything more about it, because for any given compact subset of $[a,b]$, $J$,
there exists a continuous function $\mathfrak{L} :[a,b]\to \mathbb{R}$ such that $J=\mathfrak{L}^{-1}(0)$. Next, we will deliver a concept  introduced in \cite{LG01} to characterize whether, or not, the algebraic multiplicity of  Esquinas and L\'{o}pez-G\'{o}mez \cite{ELG,Es,LG01} is well defined. Let $\mathfrak{L}\in \mathcal{C}([a,b], \Phi_{0}(X,Y))$ and $k\in\mathbb{N}$. An eigenvalue  $\lambda_{0}\in\Sigma(\mathfrak{L})$ is said to be a  \emph{$k$-algebraic eigenvalue} if there exits $\varepsilon>0$ such that
\begin{enumerate}
	\item[{\rm (a)}] $\mathfrak{L}(\lambda)\in GL(X,Y)$ if $0<|\lambda-\lambda_0|<\varepsilon$;
	\item[{\rm (b)}] There exits $C>0$ such that
	\begin{equation}
	\label{b.5}
	\|\mathfrak{L}^{-1}(\lambda)\|<\frac{C}{|\lambda-\lambda_{0}|^{k}}\quad\hbox{if}\;\;
	0<|\lambda-\lambda_0|<\varepsilon;
	\end{equation}
	\item[{\rm (c)}] $k$ is the least positive integer for which \eqref{b.5} holds.
\end{enumerate}
The set of algebraic eigenvalues of $\mathfrak{L}$ or order $k$ will be denoted by $\Alg_k(\mathfrak{L})$. Thus, the set of \emph{algebraic eigenvalues} can be defined by
\[
\Alg(\mathfrak{L}):=\biguplus_{k\in\mathbb{N}}\Alg_k(\mathfrak{L}).
\]
According to Theorems 4.4.1 and 4.4.4 of \cite{LG01}, when $\mathfrak{L}(\lambda)$ is real analytic in $[a,b]$, i.e., $\mathfrak{L}\in\mathcal{C}^{\omega}([a,b], \Phi_{0}(X,Y))$, then either $\Sigma(\mathfrak{L})=[a,b]$, or $\Sigma(\mathfrak{L})$ is finite and $\Sigma(\mathfrak{L})\subset \Alg(\mathfrak{L})$.
\par
According to \cite[Ch. 7]{LGMC}, $\lambda_0\in \Alg(\mathfrak{L})$ if, and only if, the lengths of all Jordan chains of $\mathfrak{L}$ at 	$\lambda_0$ are uniformly bounded above, which allows to characterize whether, or not, $\mathfrak{L}(\lambda)$ admits a local Smith form at $\lambda_0$ (see \cite{LGMC}). The next concept allows to introduce a generalized algebraic multiplicity, $\chi[\mathfrak{L},\lambda_0]$, in a rather natural manner.
It goes back to \cite{ELG}. Subsequently, we will denote
$$
\mathfrak{L}_{j}:=\frac{1}{j!}\mathfrak{L}^{(j)}(\lambda_{0}), \quad 1\leq j\leq r,
$$
if these derivatives exist. Given a path $\mathfrak{L}\in \mathcal{C}^{r}([a,b],\Phi_{0}(X,Y))$ and an integer  $1\leq k \leq r$, a given eigenvalue $\lambda_{0}\in \Sigma(\mathfrak{L})$ is said to be a \emph{$k$-transversal eigenvalue} of $\mathfrak{L}$ if
\begin{equation*}
\bigoplus_{j=1}^{k}\mathfrak{L}_{j}\left(\bigcap_{i=0}^{j-1}\Ker(\mathfrak{L}_{i})\right)
\oplus R(\mathfrak{L}_{0})=Y\;\; \hbox{with}\;\; \mathfrak{L}_{k}\left(\bigcap_{i=0}^{k-1}\Ker(\mathfrak{L}_{i})\right)\neq \{0\}.
\end{equation*}
For these eigenvalues, the \emph{algebraic multiplicity of $\mathfrak{L}$ at $\lambda_{0}$}, $\chi[\mathfrak{L},\lambda_0]$,  is defined through
\begin{equation}
\label{b.6}
\chi[\mathfrak{L}; \lambda_{0}] :=\sum_{j=1}^{k}j\cdot \dim \mathfrak{L}_{j}\left(\bigcap_{i=0}^{j-1}\Ker(\mathfrak{L}_{i})\right).
\end{equation}
By Theorems 4.3.2 and 5.3.3 of \cite{LG01}, for every $\mathfrak{L}\in \mathcal{C}^{r}([a,b], \Phi_{0}(X,Y))$, $k\in\{1,2,...,r\}$ and $\lambda_{0}\in \Alg_{k}(\mathfrak{L})$, there exists a polynomial $\Phi: \mathbb{R}\to \mathcal{L}(X)$ with $\Phi(\lambda_{0})=I_{X}$ such that $\lambda_{0}$ is a $k$-transversal eigenvalue of the path
\begin{equation}
\label{b.7}
\mathfrak{L}^{\Phi}:=\mathfrak{L}\circ\Phi\in \mathcal{C}^{r}([a,b], \Phi_{0}(X,Y)).
\end{equation}
Moreover, $\chi[\mathfrak{L}^{\Phi};\lambda_{0}]$ is independent of the curve of \emph{trasversalizing local isomorphisms} $\Phi$ chosen to transversalize $\mathfrak{L}$ at $\lambda_0$ through \eqref{b.7}, regardless $\Phi$ is a polynomial or not. Therefore, the next generalized concept of algebraic multiplicity is consistent
\[
\chi[\mathfrak{L};\lambda_0]:= \chi[\mathfrak{L}^\Phi;\lambda_0].
\]
This concept of algebraic multiplicity can be easily extended by setting
\[
\chi[\mathfrak{L};\lambda_0] =0 \quad \hbox{if}\;\; \lambda_0\notin\Sigma(\mathfrak{L})
\]
and
\[
\chi[\mathfrak{L};\lambda_0] =+\infty \quad \hbox{if}\;\; \lambda_0\in \Sigma(\mathfrak{L})
\setminus \Alg(\mathfrak{L}) \;\; \hbox{and}\;\; r=+\infty.
\]
Thus, $\chi[\mathfrak{L};\lambda]$ is well defined for all  $\lambda\in [a,b]$ of any smooth path $\mathfrak{L}\in \mathcal{C}^{\infty}([a,b],\Phi_{0}(X,Y))$ and, in particular, for any analytical curve  $\mathfrak{L}\in\mathcal{C}^{\omega}([a,b],\Phi_{0}(X,Y))$. The next uniqueness result goes back to
Mora-Corral \cite{MC} and \cite[Ch. 6]{LGMC}.

\begin{theorem}
	\label{th2.1}
	For every $\varepsilon>0$, the algebraic multiplicity $\chi$ is the unique map 	
	\begin{equation*}
	\chi[\cdot; \lambda_{0}]: \mathcal{C}^{\infty}((\lambda_{0}-\varepsilon,\lambda_{0}+\varepsilon), \Phi_{0}(X))\longrightarrow [0,\infty]
	\end{equation*}
	satisfying the next two axioms:
	\begin{enumerate}
		\item[{\rm (P)}] For every pair $\mathfrak{L}, \mathfrak{M} \in \mathcal{C}^{\infty}((\lambda_{0}-\varepsilon,\lambda_{0}+\varepsilon), \Phi_{0}(X))$,
		\begin{equation*}
		\chi[\mathfrak{L}\circ\mathfrak{M};\lambda_{0}]
		=\chi[\mathfrak{L};\lambda_{0}]+\chi[\mathfrak{M};\lambda_{0}].
		\end{equation*}
		\item[{\rm (N)}] There exits a rank one projection $P_{0}\in L(X)$ such that
		\begin{equation*}
		\chi[(\lambda-\lambda_{0})P_{0}+I_{X}-P_{0};\lambda_{0}]=1.
		\end{equation*}
	\end{enumerate}
\end{theorem}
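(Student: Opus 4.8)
The first observation is that the existence part is already known: the algebraic multiplicity $\chi$ defined via \eqref{b.6} satisfies the product formula (P) and the normalization (N), both of which are classical properties recorded in \cite{LG01,LGMC}. So the content of the theorem is \emph{uniqueness}: any map $\nu$ satisfying (P) and (N) must coincide with $\chi$. The plan is to reduce an arbitrary smooth path to a normal form on which (P) and (N) force the value, using the two tools available from Section 2: the transversalization procedure \eqref{b.7} (which replaces $\mathfrak{L}$ by $\mathfrak{L}^\Phi = \mathfrak{L}\circ\Phi$ with $\Phi(\lambda_0) = I_X$ and $\lambda_0$ a $k$-transversal eigenvalue) and the fact that $\chi[\mathfrak{L}^\Phi;\lambda_0]$ is independent of $\Phi$.

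\textbf{Step 1: Composition with $GL(X)$-valued paths is invisible.} First I would show that if $\Phi \in \mathcal{C}^\infty((\lambda_0-\varepsilon,\lambda_0+\varepsilon), GL(X))$, then $\nu[\mathfrak{L}\circ\Phi;\lambda_0] = \nu[\mathfrak{L};\lambda_0]$ for any $\nu$ satisfying (P). Indeed, by (P), $\nu[\mathfrak{L}\circ\Phi;\lambda_0] = \nu[\mathfrak{L};\lambda_0] + \nu[\Phi;\lambda_0]$, and applying (P) again to $\Phi$ and $\Phi^{-1}$ gives $\nu[\Phi;\lambda_0] + \nu[\Phi^{-1};\lambda_0] = \nu[I_X;\lambda_0] = 2\nu[I_X;\lambda_0] - \nu[I_X;\lambda_0]$; since $\nu[I_X;\lambda_0] = 0$ (take $\mathfrak{L} = \mathfrak{M} = I_X$ in (P)) and $\nu$ takes values in $[0,\infty]$, both terms vanish. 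Hence $\nu$ is invariant under pre- and post-composition with paths through invertibles. In particular $\nu[\mathfrak{L};\lambda_0] = \nu[\mathfrak{L}^\Phi;\lambda_0]$ for the transversalizing $\Phi$ of \eqref{b.7}, so it suffices to compute $\nu$ on $k$-transversal eigenvalues, and there to match it with the formula \eqref{b.6}.

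\textbf{Step 2: Reduction to a direct sum of one-dimensional blocks.} For a path $\mathfrak{L}$ having $\lambda_0$ as a $k$-transversal eigenvalue, I would invoke the local Smith form / canonical factorization from \cite[Ch. 7]{LGMC}: after composing on both sides with smooth $GL(X)$-valued paths (which Step 1 shows do not change $\nu$), $\mathfrak{L}$ becomes a block operator that is the identity on a complemented subspace and, on the finite-dimensional complement, a diagonal path $\mathrm{diag}\big((\lambda-\lambda_0)^{d_1},\dots,(\lambda-\lambda_0)^{d_m}\big)$ for suitable partial multiplicities $d_1,\dots,d_m$ with $\sum d_i = \chi[\mathfrak{L};\lambda_0]$. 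Writing this diagonal path as a composition (in the sense of (P), i.e.\ pointwise composition of operators acting on the $\mathbb{R}^m$ factor) of $m$ commuting factors, each of which is $(\lambda-\lambda_0)$ times a rank-one projection plus the complementary identity raised to the appropriate power, the product axiom (P) reduces the computation of $\nu[\mathfrak{L};\lambda_0]$ to summing the values $\nu$ assigns to paths of the form $(\lambda-\lambda_0)P + (I_X - P)$ with $P$ a rank-one projection, each counted with multiplicity $d_i$.

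\textbf{Step 3: Pinning down the value on a single rank-one block.} By axiom (N), there is \emph{one} rank-one projection $P_0$ with $\nu[(\lambda-\lambda_0)P_0 + I_X - P_0;\lambda_0] = 1$. I would promote this to \emph{all} rank-one projections by conjugation: given any rank-one projection $P$, there is $T \in GL(X)$ with $T P_0 T^{-1} = P$, and then $(\lambda-\lambda_0)P + I_X - P = T\big((\lambda-\lambda_0)P_0 + I_X - P_0\big)T^{-1}$; viewing $T$ as a constant (hence smooth, $GL(X)$-valued) path and using Step 1, $\nu$ of the two sides agree, so $\nu[(\lambda-\lambda_0)P + I_X - P;\lambda_0] = 1$ for every rank-one projection. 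Combining with Step 2 yields $\nu[\mathfrak{L};\lambda_0] = \sum_i d_i = \chi[\mathfrak{L};\lambda_0]$ on transversal eigenvalues, and then Step 1 (the transversalization \eqref{b.7}) extends the equality to all smooth paths; the cases $\lambda_0 \notin \Sigma(\mathfrak{L})$ and $\lambda_0 \in \Sigma(\mathfrak{L}) \setminus \Alg(\mathfrak{L})$ follow because a path can be written as $I_X$ near $\lambda_0$ (resp.\ has unbounded Jordan chains, forcing $\nu = +\infty$ via (P) applied to arbitrarily long finite-order truncations).

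\textbf{Main obstacle.} The delicate point is Step 2: obtaining, \emph{smoothly in $\lambda$ near $\lambda_0$}, the factorization of $\mathfrak{L}$ into elementary rank-one blocks. Pointwise (for fixed $\lambda$) this is just linear algebra, but the product axiom (P) operates on \emph{paths}, so one needs the factoring isomorphisms to depend $\mathcal{C}^\infty$-ly on $\lambda$ on a neighbourhood of $\lambda_0$ — precisely the content of the local Smith form of Gohberg–Sigal type established in \cite[Ch. 7]{LGMC}. I would lean on that result as a black box. A secondary subtlety is checking that the diagonal path $\mathrm{diag}((\lambda-\lambda_0)^{d_1},\dots)$ genuinely decomposes as a (P)-composition of the single-power blocks with the correct exponents — this is an elementary but slightly fussy bookkeeping of commuting projections, and care is needed that the complementary identity pieces are arranged so each factor lies in $\Phi_0(X)$ and the composition telescopes to the right diagonal form.
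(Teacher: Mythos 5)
Your outline is sound, but note that the paper itself does not prove Theorem \ref{th2.1}: it is quoted from Mora-Corral \cite{MC} and \cite[Ch.~6]{LGMC}, so there is no in-paper proof to compare against. Your three steps are exactly the standard argument, and they coincide with the machinery the paper \emph{does} deploy in its proof of the analogous Theorem \ref{th4.1} for the parity: absorption of $GL(X)$-valued factors via (P), the factorization of Corollary 5.3.2(b) of \cite{LGMC} into a globally invertible path composed with elementary paths $(\lambda-\lambda_{0})P+I_{X}-P$ whose total rank count equals $\chi[\mathfrak{L};\lambda_{0}]$, and conjugation of rank-one projections to propagate the normalization (N); so your Step 2 can lean on that corollary directly rather than on a separately assembled Smith-form bookkeeping.
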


The axiom (P) is the  \emph{product formula} and the axiom (N) is a \emph{normalization property}
for establishing the uniqueness of the algebraic multiplicity. From these axioms one can derive all the remaining properties of the generalized algebraic multiplicity $\chi$. Among them, that it equals the classical algebraic multiplicity when
\[
\mathfrak{L}(\lambda)= \lambda I_{X} - K
\]
for some compact operator $K$. Indeed, according to \cite{LGMC}, for every smoth path $\mathfrak{L}\in \mathcal{C}^{\infty}((\lambda_{0}-\varepsilon,\lambda_{0}+\varepsilon),\Phi_{0}(X))$, the following properties hold:
\begin{itemize}
	\item $\chi[\mathfrak{L};\lambda_{0}]\in\mathbb{N}\uplus\{+\infty\}$;
	\item $\chi[\mathfrak{L};\lambda_{0}]=0$ if, and only if, $\mathfrak{L}(\lambda_0)
	\in GL(X)$;
	\item $\chi[\mathfrak{L};\lambda_{0}]<\infty$ if, and only if, $\lambda_0 \in\Alg(\mathfrak{L})$.
	\item If $X =\mathbb{R}^N$, then, in any basis,
	\begin{equation*}
	\chi[\mathfrak{L};\lambda_{0}]= \mathrm{ord}_{\lambda_{0}}\det \mathfrak{L}(\lambda).
	\end{equation*}
	\item Let $L\in \mathcal{L}(X)$ be such that $\lambda I_X-L\in \Phi_0(X)$. Then, for every $\lambda_0\in \Spec(L)$, there exists $k\geq 1$ such that
	\begin{equation}
	\label{b.8}
	\begin{split}
	\chi [\lambda I_X-L;\lambda_{0}] & =\underset{n\in\mathbb{N}}{\sup} \dim \Ker [(\lambda_{0}I_{X}-L)^{n}] \\ & = \dim \Ker [(\lambda_{0}I_{X}-L)^{k}]=\mathfrak{m}_\mathrm{alg}[L;\lambda_0].
	\end{split}
	\end{equation}
\end{itemize}
Therefore, $\chi$ extends, very substantially, the classical concept of algebraic multiplicity.
	
\section{Parity and Orientability}
\noindent This section collects some very recent findings of the authors in \cite{LSA}
in connection with the concepts of \emph{parity} and \emph{orientability} introduced by  Fitzpatrick and
Pejsachowicz in \cite{FP2}. We begin by recalling some important features concerning the structure of the  space of linear Fredholm operators of index zero, $\Phi_{0}(X,Y)$, which is an open path connected subset of $\mathcal{L}(X,Y)$;  in general, $\Phi_{0}(X,Y)$ is not linear. Subsequently, for every $n\in\mathbb{N}$, we denote by $\mathcal{S}_{n}(X,Y)$ the set of \textit{singular operators of order} $n$
\begin{equation*}
	\mathcal{S}_{n}(X,Y):=\{L\in\Phi_{0}(X,Y):\;\; \dim N[L] =n\}.
\end{equation*}
Thus, the set of \textit{singular operators} is given through
\begin{equation*}
	\mathcal{S}(X,Y):=\Phi_{0}(X,Y)\backslash GL(X,Y)=\biguplus_{n\in\mathbb{N}}\mathcal{S}_{n}(X,Y).
\end{equation*}
According to \cite{FP1}, for every $n\in\mathbb{N}$, $\mathcal{S}_{n}(X,Y)$ is a Banach submanifold of $\Phi_{0}(X,Y)$ of codimension $n^{2}$. This feature allows us to view  $\mathcal{S}(X,Y)$ as an
hypersurface of $\Phi_{0}(X,Y)$. By Theorem 1 of Kuiper \cite{K}, the space of isomorphisms,
$GL({H})$, of any separable infinite dimensional Hilbert space, ${H}$,  is path connected. Thus, it is
not possible to introduce an orientation in $GL(X,Y)$ for general Banach spaces $X, Y$, since in general, $GL(X,Y)$ is path connected. This fact reveals a fundamental difference between finite and infinite dimensional vector normed spaces, because, for every $N\in\mathbb{N}$, the space
$ GL(\mathbb{R}^{N})
$
is divided into two path connected components, $GL^\pm(\mathbb{R}^N)$.
\par
A key technical tool to overcome this difficulty in order to define a degree in $\Phi_0(X,Y)$  is provided by the concept of \emph{parity} introduced by  Fitzpatrick and Pejsachowicz \cite{FP2}.  The parity is a generalized local detector of the change of orientability of a given \emph{admissible path}. Subsequently, a Fredholm path $\mathfrak{L}\in \mathcal{C}([a,b],\Phi_{0}(X,Y))$ is said to be \emph{admissible} if $\mathfrak{L}(a), \mathfrak{L}(b)\in GL(X,Y)$, and we denote by $\mathscr{C}([a,b],\Phi_{0}(X,Y))$
the set of admissible paths. Moreover, for every $r\in\mathbb{N}\uplus\{+\infty,\omega\}$,
 we set
\[
	\mathscr{C}^r([a,b],\Phi_{0}(X,Y)) := \mathcal{C}^{r}([a,b],\Phi_{0}(X,Y))\cap \mathscr{C}([a,b],\Phi_{0}(X,Y)).
\]
The fastest way to introduce the notion of parity consists in defining it for $\mathscr{C}$-transversal  paths and then for general admissible curves  through the density of $\mathscr{C}$-transversal paths in $\mathscr{C}([a,b],\Phi_{0}(X,Y))$, already established in \cite{FP1}. A  Fredholm path, $\mathfrak{L}\in \mathcal{C}([a,b],\Phi_{0}(X,Y))$, is said to be  \emph{$\mathscr{C}$-transversal} if
	\begin{enumerate}
		\item[{\rm i)}] $\mathfrak{L}\in \mathscr{C}^{1}([a,b],\Phi_{0}(X,Y))$;
		\item[{\rm ii)}] $\mathfrak{L}([a,b])\cap \mathcal{S}(X,Y)\subset \mathcal{S}_{1}(X,Y)$ and it is finite;
		\item[{\rm iii)}] $\mathfrak{L}$ is transversal to $\mathcal{S}_{1}(X,Y)$ at each point of $\mathfrak{L}([a,b])\cap \mathcal{S}(X,Y)$.
	\end{enumerate}
The path $\mathfrak{L}\in\mathcal{C}^{1}([a,b],\Phi_{0}(X,Y))$ is said to be transversal to $\mathcal{S}_{1}(X,Y)$ at $\lambda_{0}$ if
	\begin{equation*}
	\mathfrak{L}'(\lambda_{0})+T_{\mathfrak{L}(\lambda_{0})}\mathcal{S}_{1}(X,Y)=\mathcal{L}(X,Y),
	\end{equation*}
	where $T_{\mathfrak{L}(\lambda_{0})}\mathcal{S}_{1}(X,Y)$ stands for the tangent space to the manifold $\mathcal{S}_{1}(X,Y)$ at $\mathfrak{L}(\lambda_{0})$.
	\par
	When $\mathfrak{L}$ is $\mathscr{C}$-transversal, the \emph{parity} of $\mathfrak{L}$ in $[a,b]$ is defined by
	\begin{equation*}
	\sigma(\mathfrak{L},[a,b]):=(-1)^{k},
	\end{equation*}
	where $k\in\mathbb{N}$ equals the cardinal of $\mathfrak{L}([a,b])\cap \mathcal{S}(X,Y)$.
	Thus, the parity of a $\mathscr{C}$-transversal path, $\mathfrak{L}(\lambda)$, is the number of times, mod 2, that $\mathfrak{L}(\lambda)$ intersects transversally the singular hypersurface $\mathcal{S}(X,Y)$.
	\par
	The fact that the $\mathscr{C}$-transversal paths are dense in the set of all admissible paths, together with the next stability property: for any $\mathscr{C}$-transversal path $\mathfrak{L}\in \mathcal{C}([a,b],\Phi_{0}(X,Y))$, there exists $\varepsilon>0$ such that
\begin{equation*}
\sigma(\mathfrak{L},[a,b])=\sigma(\tilde{\mathfrak{L}},[a,b])
\end{equation*}
for all $\mathscr{C}$-transversal path  $\tilde {\mathfrak{L}}\in \mathcal{C}([a,b],\Phi_{0}(X,Y))$
with  $\|\mathfrak{L}-\tilde{\mathfrak{L}}\|_{\infty}<\varepsilon$ (see \cite{FP1}); allows us
to define the parity for a general admissible path $\mathfrak{L}\in \mathscr{C}([a,b],\Phi_{0}(X,Y))$ through
\begin{equation*}
	\sigma(\mathfrak{L},[a,b]):=\sigma(\tilde{\mathfrak{L}},[a,b]),
\end{equation*}
where $\tilde{\mathfrak{L}}$ is any $\mathscr{C}$-transversal curve satisfying $\|\mathfrak{L}-\tilde{\mathfrak{L}}\|_{\infty}<\varepsilon$ for sufficiently small $\varepsilon>0$.
\par
Subsequently, a given homotopy $H\in\mathcal{C}([0,1]\times[a,b],\Phi_{0}(X,Y))$  is said to be  \emph{admissible} if $H([0,1]\times\{a,b\})\subset GL(X,Y)$. Moreover, two given paths, $\mathfrak{L}_{1}$ and $\mathfrak{L}_{2}$, are said to be $\mathcal{A}$-\emph{homotopic} if they are homotopic through an admissible homotopy. A fundamental property of the parity is its invariance under admissible homotopies, which was established in \cite{FP2}.
\par
	The next result, proven by the authors in \cite{LSA}, establishes that, as soon as the Fredholm path $\mathfrak{L}(\lambda)$ is defined in
	$\mathcal{L}_{c}(X)$, every transversal intersection with $\mathcal{S}(X)$ induces a change of orientation, i.e., a change of path-connected component.
	
\begin{theorem}
		\label{th3.2}
		Let $\mathfrak{L}\in \mathscr{C}([a,b],\mathcal{L}_{c}(X))$ be an admisible curve with values in $\mathcal{L}_{c}(X)$. Then, $\sigma(\mathfrak{L},[a,b])=-1$ if, and only if, $\mathfrak{L}(a)$ and $\mathfrak{L}(b)$ lye in different path-connected components of $GL_{c}(X)$.
\end{theorem}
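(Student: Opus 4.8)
The plan is to restate the claim as the single identity $\sigma(\mathfrak{L},[a,b]) = \iota(\mathfrak{L}(a))\,\iota(\mathfrak{L}(b))$, where $\iota:GL_c(X)\to\{-1,+1\}$ is the locally constant function with $\iota^{-1}(1)=GL_c^{+}(X)$; since $GL_c(X)$ has exactly the two path components $GL_c^{+}(X)$ and $GL_c^{-}(X)$, this identity is equivalent to the asserted ``if and only if''. By \eqref{b.4} together with \eqref{1.2} of Theorem \ref{th1.1}, for every $L=I_X-K\in GL_c(X)$ with $K$ compact one has $\iota(L)=(-1)^{\nu(K)}$, where $\nu(K):=\sum_{\mu\in\Spec(K)\cap(1,\infty)}\mathfrak{m}_{\mathrm{alg}}[K;\mu]$ is a finite sum (the eigenvalues of a compact operator in $(1,\infty)$ are finitely many). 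Writing $\mathfrak{L}(\lambda)=I_X-K(\lambda)$, the goal becomes to prove $\sigma(\mathfrak{L},[a,b])=(-1)^{\nu(K(a))+\nu(K(b))}$.

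First I would reduce to a $\mathscr{C}$-transversal curve. By the very definition of the parity of a general admissible curve, $\sigma(\mathfrak{L},[a,b])$ equals $\sigma(\tilde{\mathfrak{L}},[a,b])$ for any $\mathscr{C}$-transversal $\tilde{\mathfrak{L}}$ close enough to $\mathfrak{L}$, and such a $\tilde{\mathfrak{L}}$ can be chosen inside $\mathscr{C}([a,b],\mathcal{L}_c(X))$ (the transversalizing perturbations of \cite{FP1} being of finite rank, hence compact, they preserve the class $\mathcal{L}_c(X)$ of compact perturbations of $I_X$). As $\iota$ is locally constant on the open set $GL_c(X)\ni\mathfrak{L}(a),\mathfrak{L}(b)$, the endpoints $\tilde{\mathfrak{L}}(a),\tilde{\mathfrak{L}}(b)$ lie in the same respective components as $\mathfrak{L}(a),\mathfrak{L}(b)$; hence both sides of the target identity are unchanged, and we may assume $\mathfrak{L}$ is $\mathscr{C}$-transversal, with $\Sigma(\mathfrak{L})=\{\lambda_1<\cdots<\lambda_k\}\subset(a,b)$ and each $\mathfrak{L}(\lambda_j)\in\mathcal{S}_1(X)$ a transversal intersection, so that $\sigma(\mathfrak{L},[a,b])=(-1)^{k}$. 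The map $\lambda\mapsto\iota(\mathfrak{L}(\lambda))=(-1)^{\nu(K(\lambda))}$ is well defined and locally constant on $[a,b]\setminus\{\lambda_1,\dots,\lambda_k\}$, so everything reduces to a local \emph{jump lemma}: for each $j$ the value $\iota(\mathfrak{L}(\lambda))$ changes sign across $\lambda_j$. Granting it, $\iota(\mathfrak{L}(b))=(-1)^{k}\iota(\mathfrak{L}(a))$, whence $(-1)^{\nu(K(a))+\nu(K(b))}=\iota(\mathfrak{L}(a))\iota(\mathfrak{L}(b))=(-1)^{k}=\sigma(\mathfrak{L},[a,b])$.

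To prove the jump lemma at $\lambda_0:=\lambda_j$, I would localize by the Riesz projection $P(\lambda)$ of $K(\lambda)$ attached to a small disk around the eigenvalue $1$ of $K(\lambda_0)$; for $\lambda$ near $\lambda_0$ it is of class $C^{1}$, has rank $m:=\mathfrak{m}_{\mathrm{alg}}[K(\lambda_0);1]$, and furnishes a $K(\lambda)$-invariant splitting $X=E(\lambda)\oplus F(\lambda)$ with $\dim E(\lambda)=m$. On $F(\lambda)$ the operator $\mathfrak{L}(\lambda)$ is an isomorphism throughout a neighbourhood of $\lambda_0$ (since $1\notin\Spec(K(\lambda)|_{F(\lambda)})$), so, trivializing the bundle $F(\lambda)$ locally, $\nu(K(\lambda)|_{F(\lambda)})$ is constant modulo $2$ near $\lambda_0$. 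As $\nu(K(\lambda))=\nu(K(\lambda)|_{E(\lambda)})+\nu(K(\lambda)|_{F(\lambda)})$, it remains to control the finite-dimensional part $\hat K(\lambda):=K(\lambda)|_{E(\lambda)}$: for $0<|\lambda-\lambda_0|$ small, $\mathfrak{L}(\lambda)\in GL(X)$, so $g(\lambda):=\det\big(\hat K(\lambda)-I_{E(\lambda)}\big)$ is a $C^{1}$ real function, nonzero there and vanishing at $\lambda_0$, and a count of the eigenvalues of $\hat K(\lambda)$ pairing complex conjugate ones gives $(-1)^{\nu(\hat K(\lambda))}=(-1)^{m}\,\sign g(\lambda)$. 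Thus the jump lemma is equivalent to: $g$ changes sign at $\lambda_0$.

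The hard part is this last step. I would compute $g'(\lambda_0)$ by Jacobi's formula, $g'(\lambda_0)=\Tr\big(\operatorname{adj}A(\lambda_0)\cdot A'(\lambda_0)\big)$, where $A(\lambda)$ is the matrix of $\hat K(\lambda)-I_{E(\lambda)}$ in a $C^{1}$ frame of $E(\lambda)$. Because $\dim N[\mathfrak{L}(\lambda_0)]=1$ and $N[\mathfrak{L}(\lambda_0)]\subset E(\lambda_0)$, $A(\lambda_0)$ has rank $m-1$ and $\operatorname{adj}A(\lambda_0)$ is a nonzero rank-one matrix whose column space is the coordinate line of a generator $\phi_0$ of $N[\mathfrak{L}(\lambda_0)]$ and whose row space is a nonzero multiple of $\psi_0|_{E(\lambda_0)}$, with $\psi_0$ a generator of the annihilator $R[\mathfrak{L}(\lambda_0)]^{\perp}$. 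Expanding $A'(\lambda_0)$ by the product rule (using the $C^{1}$ extension $f_i^{*}(\lambda)\circ P(\lambda)$ of the dual frame) and contracting against this rank-one adjugate makes the two frame-derivative terms vanish --- one because $\mathfrak{L}(\lambda_0)\phi_0=0$, the other because $\mathfrak{L}(\lambda_0)$ maps $X$ into $R[\mathfrak{L}(\lambda_0)]\subset\ker\psi_0$ --- and leaves $g'(\lambda_0)=-c\,\langle\psi_0,\mathfrak{L}'(\lambda_0)\phi_0\rangle$ for some $c\neq 0$. Since transversality of $\mathfrak{L}$ to $\mathcal{S}_1(X)$ at $\lambda_0$ is precisely the statement $\mathfrak{L}'(\lambda_0)\notin T_{\mathfrak{L}(\lambda_0)}\mathcal{S}_1(X)=\{T\in\mathcal{L}(X):\langle\psi_0,T\phi_0\rangle=0\}$, i.e. $\langle\psi_0,\mathfrak{L}'(\lambda_0)\phi_0\rangle\neq 0$, one gets $g'(\lambda_0)\neq 0$, so $g$ changes sign at $\lambda_0$, completing the jump lemma and hence the theorem. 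The two places I expect to cost genuine work are the cancellation of the frame-dependent terms in $g'(\lambda_0)$ (which hinges on the precise identification of the row and column spaces of $\operatorname{adj}A(\lambda_0)$ with $\psi_0$ and $\phi_0$) and the verification that the $\mathscr{C}$-transversal approximations of \cite{FP1} can be realized inside $\mathscr{C}([a,b],\mathcal{L}_c(X))$.
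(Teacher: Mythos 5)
The paper itself offers no proof of Theorem~\ref{th3.2}: it is quoted from \cite{LSA}, where it follows from the interplay between the parity, the multiplicity $\chi$ and the Leray--Schauder formula (in the spirit of Theorem~\ref{th3.3} combined with \eqref{b.8} and \eqref{1.2}, or equivalently from the Fitzpatrick--Pejsachowicz identity expressing the parity of a path of compact vector fields as the product of the Leray--Schauder indices of its endpoints). Your argument is a correct, essentially self-contained re-derivation of that identity $\sigma(\mathfrak{L},[a,b])=\deg_{LS}(\mathfrak{L}(a),\Omega)\cdot\deg_{LS}(\mathfrak{L}(b),\Omega)$ from first principles: reduce to a $\mathscr{C}$-transversal path, observe that $\lambda\mapsto(-1)^{\nu(K(\lambda))}$ is locally constant off $\Sigma(\mathfrak{L})$ by homotopy invariance of $\deg_{LS}$, and prove a jump lemma at each transversal crossing of $\mathcal{S}_1(X)$ via the Riesz projection, the sign of $\det(\hat K(\lambda)-I)$, and Jacobi's formula, with transversality entering exactly as $\langle\psi_0,\mathfrak{L}'(\lambda_0)\phi_0\rangle\neq 0$. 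I checked the two computational pivots --- the identity $(-1)^{\nu(\hat K(\lambda))}=(-1)^m\operatorname{sign}g(\lambda)$ (complex eigenvalues pair off, real ones below $1$ contribute the sign) and the cancellation of the frame-derivative terms against the rank-one adjugate (using $A(\lambda_0)\phi_0=0$, $\psi_0\circ A(\lambda_0)=0$ and $\psi_0\circ P(\lambda_0)=\psi_0$) --- and both are sound. The one step you rightly flag and that genuinely needs to be nailed down is that the $\mathscr{C}$-transversal approximation can be performed \emph{inside} $\mathcal{L}_c(X)$; this does work, since $\mathcal{L}_c(X)=I_X+\mathcal{K}(X)$ is a closed affine subspace in which $\mathcal{S}_1(X)\cap\mathcal{L}_c(X)$ still has codimension one (rank-one perturbations already realize $\langle\psi_0,T\phi_0\rangle\neq 0$), so the transversality argument of \cite{FP1} runs verbatim in that subspace, but you should say this rather than only appeal to the perturbations of \cite{FP1} being finite rank. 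Compared with the route through Theorem~\ref{th3.3}, yours avoids the analytic $\mathcal{A}$-homotopy (which need not stay in $\mathcal{L}_c(X)$) and the multiplicity $\chi$ altogether, at the cost of a hands-on finite-dimensional perturbation computation; the \cite{LSA} route is shorter but leans on the full machinery of Sections 2--3.
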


Theorem \ref{th3.2} motivates the geometrical interpretation of the parity as a local detector of the change of orientation of the operators of a Fredholm path. As illustrated by Figure \ref{Fig3}, each transversal intersection of the path $\mathfrak{L}(\lambda)$ with the singular hypersurface $\mathcal{S}(X)$ can be viewed as a change of path-connected component.
	
	\begin{figure}
		\begin{center}

			\tikzset{every picture/.style={line width=0.75pt}} 
			
			\begin{tikzpicture}[x=0.75pt,y=0.75pt,yscale=-1,xscale=1]
			
			\draw   (74,160.09) .. controls (74,126.9) and (100.27,100) .. (132.69,100) .. controls (165.1,100) and (191.37,126.9) .. (191.37,160.09) .. controls (191.37,193.28) and (165.1,220.19) .. (132.69,220.19) .. controls (100.27,220.19) and (74,193.28) .. (74,160.09) -- cycle ;
			\draw   (242.13,160.91) .. controls (242.13,127.72) and (268.4,100.81) .. (300.81,100.81) .. controls (333.23,100.81) and (359.5,127.72) .. (359.5,160.91) .. controls (359.5,194.1) and (333.23,221) .. (300.81,221) .. controls (268.4,221) and (242.13,194.1) .. (242.13,160.91) -- cycle ;
			\draw    (76.38,168.21) .. controls (99.38,190.14) and (158.06,192.58) .. (189.79,168.21) ;

			\draw    (243.71,166.59) .. controls (266.71,188.52) and (325.4,190.95) .. (357.12,166.59) ;

			\draw  [dash pattern={on 4.5pt off 4.5pt}]  (76.38,168.21) .. controls (108.1,143.85) and (163.62,143.04) .. (189.79,168.21) ;

			\draw  [dash pattern={on 4.5pt off 4.5pt}]  (243.71,166.59) .. controls (275.44,142.23) and (330.95,141.42) .. (357.12,166.59) ;

			\draw    (124.76,150.35) -- (112.07,182.83) ;

			\draw    (146.96,150.35) -- (134.27,186.08) ;

			\draw    (165.99,154.41) -- (156.48,182.02) ;

			\draw    (102.55,154.41) -- (92.24,177.15) ;

			\draw    (271.47,152.79) -- (261.16,175.52) ;

			\draw    (290.5,148.72) -- (277.82,181.21) ;

			\draw    (309.54,147.1) -- (296.85,182.83) ;

			\draw    (328.57,152.79) -- (319.05,180.4) ;

			\draw    (93.03,136.54) .. controls (124.76,112.18) and (115.64,229.93) .. (147.36,205.57) ;

			\draw    (270.28,121.93) .. controls (302,97.56) and (288.92,137.36) .. (279.4,162.53) ;

			\draw  [dash pattern={on 4.5pt off 4.5pt}]  (279.4,159.28) .. controls (275.69,169.74) and (276.49,164.87) .. (273.31,181.92) ;

			\draw    (273.31,181.92) .. controls (259.58,213.69) and (317.86,225.87) .. (304.38,184.46) ;

			\draw  [dash pattern={on 4.5pt off 4.5pt}]  (304.38,184.46) .. controls (296.72,171.46) and (311.39,170.65) .. (312.18,167.4) ;

			\draw    (312.18,167.4) .. controls (343.9,143.04) and (296.85,149.54) .. (319.05,124.36) ;

			\draw  [fill={rgb, 255:red, 0; green, 0; blue, 0 }  ,fill opacity=1 ] (89.86,139.79) .. controls (89.86,138) and (91.28,136.54) .. (93.03,136.54) .. controls (94.79,136.54) and (96.21,138) .. (96.21,139.79) .. controls (96.21,141.59) and (94.79,143.04) .. (93.03,143.04) .. controls (91.28,143.04) and (89.86,141.59) .. (89.86,139.79) -- cycle ;
			\draw  [fill={rgb, 255:red, 0; green, 0; blue, 0 }  ,fill opacity=1 ] (312.18,167.4) .. controls (312.18,165.61) and (313.6,164.15) .. (315.35,164.15) .. controls (317.11,164.15) and (318.53,165.61) .. (318.53,167.4) .. controls (318.53,169.2) and (317.11,170.65) .. (315.35,170.65) .. controls (313.6,170.65) and (312.18,169.2) .. (312.18,167.4) -- cycle ;
			\draw  [fill={rgb, 255:red, 0; green, 0; blue, 0 }  ,fill opacity=1 ] (276.23,162.53) .. controls (276.23,160.74) and (277.65,159.28) .. (279.4,159.28) .. controls (281.15,159.28) and (282.57,160.74) .. (282.57,162.53) .. controls (282.57,164.32) and (281.15,165.78) .. (279.4,165.78) .. controls (277.65,165.78) and (276.23,164.32) .. (276.23,162.53) -- cycle ;
			\draw  [fill={rgb, 255:red, 0; green, 0; blue, 0 }  ,fill opacity=1 ] (335.71,192.58) .. controls (335.71,190.78) and (337.13,189.33) .. (338.88,189.33) .. controls (340.63,189.33) and (342.05,190.78) .. (342.05,192.58) .. controls (342.05,194.37) and (340.63,195.83) .. (338.88,195.83) .. controls (337.13,195.83) and (335.71,194.37) .. (335.71,192.58) -- cycle ;
			\draw  [fill={rgb, 255:red, 0; green, 0; blue, 0 }  ,fill opacity=1 ] (267.11,121.93) .. controls (267.11,120.13) and (268.53,118.68) .. (270.28,118.68) .. controls (272.03,118.68) and (273.45,120.13) .. (273.45,121.93) .. controls (273.45,123.72) and (272.03,125.17) .. (270.28,125.17) .. controls (268.53,125.17) and (267.11,123.72) .. (267.11,121.93) -- cycle ;
			\draw  [fill={rgb, 255:red, 0; green, 0; blue, 0 }  ,fill opacity=1 ] (144.19,205.57) .. controls (144.19,203.78) and (145.61,202.32) .. (147.36,202.32) .. controls (149.11,202.32) and (150.53,203.78) .. (150.53,205.57) .. controls (150.53,207.36) and (149.11,208.82) .. (147.36,208.82) .. controls (145.61,208.82) and (144.19,207.36) .. (144.19,205.57) -- cycle ;
			\draw   (112.02,135.29) -- (111.7,142.51) -- (104.92,140.48) ;
			\draw   (128.6,188.36) -- (128.45,198.05) -- (119.87,193.97) ;
			\draw   (308.58,142.92) -- (313.28,134.52) -- (318.87,142.33) ;
			\draw   (292.07,206.6) -- (301.33,208.55) -- (295.83,216.43) ;
			\draw   (294.78,123.84) -- (289.8,132.08) -- (284.47,124.07) ;
			\draw    (171.24,171.21) -- (219.08,218.59) ;
			\draw [shift={(220.5,220)}, rotate = 224.72] [color={rgb, 255:red, 0; green, 0; blue, 0 }  ][line width=0.75]    (10.93,-3.29) .. controls (6.95,-1.4) and (3.31,-0.3) .. (0,0) .. controls (3.31,0.3) and (6.95,1.4) .. (10.93,3.29)   ;
			
			\draw    (268.32,172.15) -- (234.65,220.36) ;
			\draw [shift={(233.5,222)}, rotate = 304.93] [color={rgb, 255:red, 0; green, 0; blue, 0 }  ][line width=0.75]    (10.93,-3.29) .. controls (6.95,-1.4) and (3.31,-0.3) .. (0,0) .. controls (3.31,0.3) and (6.95,1.4) .. (10.93,3.29)   ;
			
			\draw    (319.05,124.36) .. controls (349.35,101.1) and (342.76,134.12) .. (340.72,147.75) ;
			\draw [shift={(340.47,149.54)}, rotate = 277.42] [color={rgb, 255:red, 0; green, 0; blue, 0 }  ][line width=0.75]    (10.93,-3.29) .. controls (6.95,-1.4) and (3.31,-0.3) .. (0,0) .. controls (3.31,0.3) and (6.95,1.4) .. (10.93,3.29)   ;
			
			\draw    (340.47,149.54) .. controls (340.47,151.97) and (336.24,158.47) .. (335.71,161.72) ;

			\draw  [dash pattern={on 4.5pt off 4.5pt}]  (335.71,161.72) .. controls (332.36,173.07) and (333.95,167.39) .. (332.36,173.88) ;

			\draw    (332.36,173.88) .. controls (330.14,185.25) and (335.69,188.5) .. (338.88,192.58) ;

			\draw  [fill={rgb, 255:red, 0; green, 0; blue, 0 }  ,fill opacity=1 ] (332.54,164.97) .. controls (332.54,163.17) and (333.96,161.72) .. (335.71,161.72) .. controls (337.46,161.72) and (338.88,163.17) .. (338.88,164.97) .. controls (338.88,166.76) and (337.46,168.21) .. (335.71,168.21) .. controls (333.96,168.21) and (332.54,166.76) .. (332.54,164.97) -- cycle ;
			\draw  [fill={rgb, 255:red, 0; green, 0; blue, 0 }  ,fill opacity=1 ] (117.62,173.9) .. controls (117.62,172.11) and (119.04,170.65) .. (120.79,170.65) .. controls (122.54,170.65) and (123.96,172.11) .. (123.96,173.9) .. controls (123.96,175.69) and (122.54,177.15) .. (120.79,177.15) .. controls (119.04,177.15) and (117.62,175.69) .. (117.62,173.9) -- cycle ;
			\draw  [color={rgb, 255:red, 255; green, 255; blue, 255 }  ,draw opacity=1 ][line width=3] [line join = round][line cap = round] (132.53,174.39) .. controls (132.53,175.24) and (130.99,175.2) .. (130.15,175.2) ;
			\draw  [color={rgb, 255:red, 255; green, 255; blue, 255 }  ,draw opacity=1 ][line width=3] [line join = round][line cap = round] (137.29,182.51) .. controls (138.1,182.51) and (139.67,180.9) .. (139.67,180.07) ;
			
			\draw (232,241) node   {$\mathcal{S}(X) \cap \mathcal{L}_{c}( X)$};
			\draw (110.48,121.93) node   {$\mathfrak{L}(a)$};
			\draw (164.41,194.2) node   {$\mathfrak{L}(b)$};
			\draw (269.09,137.36) node   {$\mathfrak{L}(a)$};
			\draw (323.19,200.51) node   {$\mathfrak{L}(b)$};
			\draw (70,85) node   {$GL^{+}_{c}(X)$};
			\draw (67,237) node   {$GL^{-}_{c}(X)$};
			\draw (354,84) node   {$GL^{+}_{c}(X)$};
			\draw (360,236) node   {$GL^{-}_{c}(X)$};

			\end{tikzpicture}
		\end{center}
		
		\caption{Geometrical interpretation of the parity on $\mathcal{L}_{c}(X)$.}
		\label{Fig3}
	\end{figure}
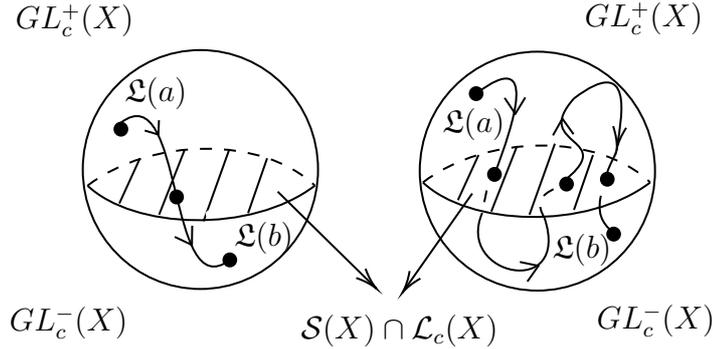
	
The next result, proven by the authors in \cite{LSA}, shows how the parity of any admissible Fredholm path
$\mathfrak{L}\in\mathscr{C}([a,b], \Phi_{0}(X,Y))$ can be computed though the algebraic multiplicity $\chi$.  This result is important from the point of view of the applications.
	
\begin{theorem}
		\label{th3.3}
		Any continuous path $\mathfrak{L}\in\mathscr{C}([a,b],\Phi_{0}(X,Y))$ is $\mathcal{A}$-homotopic to an analytic Fredholm curve $\mathfrak{L}_{\omega}\in\mathscr{C}^{\omega}([a,b],\Phi_{0}(X,Y))$. Moreover, for any of these analytic paths,
		\begin{equation*}
		\sigma(\mathfrak{L},[a,b])=(-1)^{\sum_{i=1}^{n}\chi[\mathfrak{L}_{\omega};\lambda_{i}]},
		\end{equation*}
		where
		\begin{equation*}
		\Sigma(\mathfrak{L}_{\omega})=\{\lambda_{1},\lambda_{2},...,\lambda_{n}\}.
		\end{equation*}
\end{theorem}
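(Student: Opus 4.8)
The plan is to prove the first assertion by a polynomial approximation, and then to deduce the formula by reducing, through homotopy invariance and the concatenation property of the parity, to the case of an analytic curve with a single eigenvalue, where the identity $\sigma=(-1)^{\chi}$ is read off from the local Smith form. For the first assertion, note that $\Phi_{0}(X,Y)$ is open in $\mathcal{L}(X,Y)$ and $\mathfrak{L}([a,b])$ is compact, so there is $\varepsilon>0$ such that the $\varepsilon$-neighbourhood of $\mathfrak{L}([a,b])$ lies in $\Phi_{0}(X,Y)$ and, shrinking $\varepsilon$, the $\varepsilon$-balls centred at $\mathfrak{L}(a),\mathfrak{L}(b)$ lie in $GL(X,Y)$. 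Approximating $\mathfrak{L}$ uniformly on $[a,b]$ by a polynomial (hence real analytic) $\mathcal{L}(X,Y)$-valued curve $\mathfrak{L}_{\omega}$ with $\|\mathfrak{L}-\mathfrak{L}_{\omega}\|_{\infty}<\varepsilon$ — e.g.\ by the associated Bernstein polynomials, whose construction works verbatim for Banach-space valued functions — we obtain $\mathfrak{L}_{\omega}\in\mathscr{C}^{\omega}([a,b],\Phi_{0}(X,Y))$, and the straight-line homotopy $H(s,\lambda):=(1-s)\mathfrak{L}(\lambda)+s\mathfrak{L}_{\omega}(\lambda)$ remains within $\varepsilon$ of $\mathfrak{L}$, hence in $\Phi_{0}(X,Y)$, while keeping its endpoints in $GL(X,Y)$; thus $H$ is admissible and $\mathfrak{L}$ is $\mathcal{A}$-homotopic to $\mathfrak{L}_{\omega}$.

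Next, fix any analytic $\mathfrak{L}_{\omega}\in\mathscr{C}^{\omega}([a,b],\Phi_{0}(X,Y))$ that is $\mathcal{A}$-homotopic to $\mathfrak{L}$. By homotopy invariance of the parity under admissible homotopies \cite{FP2}, $\sigma(\mathfrak{L},[a,b])=\sigma(\mathfrak{L}_{\omega},[a,b])$; and by Theorems 4.4.1 and 4.4.4 of \cite{LG01}, since the endpoints of $\mathfrak{L}_{\omega}$ are isomorphisms, $\Sigma(\mathfrak{L}_{\omega})=\{\lambda_{1},\dots,\lambda_{n}\}$ is finite with $\lambda_{i}\in\Alg(\mathfrak{L}_{\omega})$ for each $i$, so that every $\chi[\mathfrak{L}_{\omega};\lambda_{i}]$ is a well-defined natural number. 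Choosing $a=t_{0}<t_{1}<\dots<t_{n}=b$ with $\lambda_{i}\in(t_{i-1},t_{i})$, so that $\mathfrak{L}_{\omega}(t_{j})\in GL(X,Y)$ for all $j$, the multiplicativity of the parity under concatenation of admissible paths \cite{FP2} yields $\sigma(\mathfrak{L}_{\omega},[a,b])=\prod_{i=1}^{n}\sigma(\mathfrak{L}_{\omega},[t_{i-1},t_{i}])$. Since $(-1)^{\sum_{i}\chi[\mathfrak{L}_{\omega};\lambda_{i}]}=\prod_{i}(-1)^{\chi[\mathfrak{L}_{\omega};\lambda_{i}]}$, it remains to prove that $\sigma(\mathfrak{M},[c,d])=(-1)^{\chi[\mathfrak{M};\lambda_{0}]}$ for any analytic curve $\mathfrak{M}$, admissible on $[c,d]$, with $\Sigma(\mathfrak{M})=\{\lambda_{0}\}\subset(c,d)$.

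For this one-eigenvalue situation I would invoke the local Smith form. As $\lambda_{0}\in\Alg(\mathfrak{M})$, after shrinking $[c,d]$ around $\lambda_{0}$ the curve factorizes as $\mathfrak{M}(\lambda)=P(\lambda)D(\lambda)Q(\lambda)$ \cite[Ch.~7]{LGMC}, with $P:[c,d]\to GL(Y)$ and $Q:[c,d]\to GL(X)$ analytic and $D(\lambda)$ of the form $(\lambda-\lambda_{0})^{\kappa_{1}}\oplus\dots\oplus(\lambda-\lambda_{0})^{\kappa_{m}}\oplus I$ in appropriate splittings of $X$ and $Y$, where $\kappa_{1}\le\dots\le\kappa_{m}$ are the partial multiplicities and $\sum_{j=1}^{m}\kappa_{j}=\chi[\mathfrak{M};\lambda_{0}]$. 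Since $P$ and $Q$ never leave $GL$, homotopy invariance and the product formula for the parity \cite{FP2} give $\sigma(\mathfrak{M},[c,d])=\sigma(D,[c,d])$; and the parity of a direct sum of admissible curves is the product of the parities of the summands, the identity block contributing $1$ and each scalar block $\lambda\mapsto(\lambda-\lambda_{0})^{\kappa_{j}}$ contributing $(-1)^{\kappa_{j}}$ — the latter because a $\mathscr{C}$-transversal perturbation of $(\lambda-\lambda_{0})^{\kappa_{j}}$ on $[c,d]$ has a number of transversal zeros congruent to $\kappa_{j}$ mod $2$, as $(\lambda-\lambda_{0})^{\kappa_{j}}$ changes sign on $[c,d]$ exactly when $\kappa_{j}$ is odd. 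Hence $\sigma(\mathfrak{M},[c,d])=(-1)^{\sum_{j}\kappa_{j}}=(-1)^{\chi[\mathfrak{M};\lambda_{0}]}$, which together with the previous paragraph completes the argument. Alternatively, one may carry out a Lyapunov--Schmidt reduction of $\mathfrak{M}$ near $\lambda_{0}$ to a finite-dimensional analytic curve $B$, for which $\chi[\mathfrak{M};\lambda_{0}]=\chi[B;\lambda_{0}]=\ord_{\lambda_{0}}\det B(\lambda)$ and, in finite dimensions, $\sigma(\mathfrak{M},[c,d])=\sigma(B,[c,d])=\sign\!\big(\det B(c)\det B(d)\big)=(-1)^{\ord_{\lambda_{0}}\det B}$.

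The soft ingredients — polynomial approximation, homotopy invariance, finiteness of $\Sigma(\mathfrak{L}_{\omega})$, and the concatenation property — are standard or quoted from \cite{FP2,LG01,LGMC}; the crux, and the main obstacle, is the one-eigenvalue case: proving that the parity is unchanged under composition with curves of isomorphisms and under restriction to a direct summand, together with the elementary but decisive identity $\sigma((\lambda-\lambda_{0})^{\kappa},[c,d])=(-1)^{\kappa}$. This is precisely the step that matches the topological invariant $\sigma$ with the algebraic invariant $\chi$, and where the structure theory of the generalized multiplicity (the local Smith form, respectively the reduction properties of $\chi$) has to be used in full.
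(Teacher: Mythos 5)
Your argument is correct. The paper does not actually prove Theorem \ref{th3.3} (it is quoted from \cite{LSA}), but your route --- uniform polynomial (Bernstein) approximation plus the straight-line admissible homotopy for the first assertion; then homotopy invariance of the parity, finiteness of $\Sigma(\mathfrak{L}_{\omega})$ via Theorems 4.4.1 and 4.4.4 of \cite{LG01}, concatenation at regular points, and the local Smith form to get $\sigma=(-1)^{\chi}$ at each isolated eigenvalue --- is essentially the same circle of ideas the paper itself deploys in Section 4, where the proof of Theorem \ref{th4.1} uses the equivalent rank-one factorization $\mathfrak{L}=\mathfrak{I}\circ\mathfrak{C}_{\Pi_{0}}\circ\cdots\circ\mathfrak{C}_{\Pi_{k-1}}$ of Corollary 5.3.2 of \cite{LGMC} together with $\chi[\mathfrak{L};\lambda_{0}]=\sum_{i}\operatorname{rank}\Pi_{i}$, and the proof of Theorem \ref{th4.2} performs exactly your global reduction.
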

	
Subsequently, we consider $\mathfrak{L}\in\mathcal{C}([a,b],\Phi_{0}(X,Y))$ and an isolated eigenvalue $\lambda_{0}\in\Sigma(\mathfrak{L})$. Then, the \textit{localized parity} of $\mathfrak{L}$ at $\lambda_{0}$ is defined through
	\begin{equation*}
	\sigma(\mathfrak{L},\lambda_{0}):=\lim_{\eta\downarrow 0}\sigma(\mathfrak{L},[\lambda_{0}-\eta,\lambda_{0}+\eta]).
	\end{equation*}
As a consequence of Theorem \ref{th3.3}, the next result, going back to \cite{LSA}, holds.

\begin{corollary}
		\label{cr4.6}
		Let $\mathfrak{L}\in\mathcal{C}^{r}([a,b],\Phi_{0}(X,Y))$ with $r\in\mathbb{N}\uplus\{\infty,\omega\}$ and $\lambda_{0}\in\Alg_{k}(\mathfrak{L})$ for some $1\leq k \leq r$. Then
		\begin{equation}\label{c.2}
		\sigma(\mathfrak{L},\lambda_{0})=(-1)^{\chi[\mathfrak{L};\lambda_{0}]}.
		\end{equation}
\end{corollary}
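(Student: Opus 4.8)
\noindent The plan is to reduce the computation of $\sigma(\mathfrak{L},\lambda_{0})$ to a small interval about $\lambda_{0}$, to transversalize $\mathfrak{L}$ there by means of the curve of local isomorphisms provided by the multiplicity theory of \cite{LG01}, and then to read off the parity from a scalar local model, invoking Theorem \ref{th3.3} at the last step. First I would localize. Since $\lambda_{0}\in\Alg_{k}(\mathfrak{L})$, there exists $\varepsilon>0$ such that $\mathfrak{L}(\lambda)\in GL(X,Y)$ whenever $0<|\lambda-\lambda_{0}|<\varepsilon$; hence, for each $\eta\in(0,\varepsilon)$, the restriction $\mathfrak{L}|_{[\lambda_{0}-\eta,\lambda_{0}+\eta]}$ is an admissible Fredholm path whose unique eigenvalue is $\lambda_{0}$. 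Using that the parity is multiplicative under concatenation of admissible paths and that any Fredholm path contained in $GL(X,Y)$ has parity $+1$ \cite{FP2}, one checks that $\sigma(\mathfrak{L},[\lambda_{0}-\eta,\lambda_{0}+\eta])$ does not depend on $\eta\in(0,\varepsilon)$; therefore it equals the localized parity $\sigma(\mathfrak{L},\lambda_{0})$.

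Next I would transversalize. By Theorems 4.3.2 and 5.3.3 of \cite{LG01} there is a polynomial curve $\Phi:\mathbb{R}\to\mathcal{L}(X)$ with $\Phi(\lambda_{0})=I_{X}$ such that $\lambda_{0}$ is a $k$-transversal eigenvalue of $\mathfrak{L}^{\Phi}$, see \eqref{b.7}, and $\chi[\mathfrak{L};\lambda_{0}]=\chi[\mathfrak{L}^{\Phi};\lambda_{0}]$. Shrinking $\eta$ so that $\|\Phi(\lambda)-I_{X}\|<1$ for all $|\lambda-\lambda_{0}|\leq\eta$, the map
\begin{equation*}
H(s,\lambda):=\mathfrak{L}(\lambda)\bigl[(1-s)I_{X}+s\,\Phi(\lambda)\bigr],\qquad (s,\lambda)\in[0,1]\times[\lambda_{0}-\eta,\lambda_{0}+\eta],
\end{equation*}
is continuous, takes values in $\Phi_{0}(X,Y)$ (as $\mathfrak{L}(\lambda)\in\Phi_{0}(X,Y)$ and $(1-s)I_{X}+s\,\Phi(\lambda)\in GL(X)$), equals $\mathfrak{L}$ for $s=0$ and $\mathfrak{L}^{\Phi}$ for $s=1$, and maps $[0,1]\times\{\lambda_{0}\pm\eta\}$ into $GL(X,Y)$. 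Thus $H$ is an admissible homotopy and, by the invariance of the parity under admissible homotopies, $\sigma(\mathfrak{L},\lambda_{0})=\sigma(\mathfrak{L}^{\Phi},[\lambda_{0}-\eta,\lambda_{0}+\eta])$.

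Finally, since $\lambda_{0}$ is a $k$-transversal eigenvalue of $\mathfrak{L}^{\Phi}$, it admits a local Smith form at $\lambda_{0}$ (see \cite{LGMC}): on a sufficiently small neighborhood of $\lambda_{0}$ there are curves of isomorphisms $P(\lambda)$ and $Q(\lambda)$ with $\mathfrak{L}^{\Phi}(\lambda)=P(\lambda)\,\mathfrak{D}(\lambda)\,Q(\lambda)$, where
\begin{equation*}
\mathfrak{D}(\lambda)=I_{X}+\sum_{i=1}^{m}\bigl((\lambda-\lambda_{0})^{n_{i}}-1\bigr)P_{i},\qquad \sum_{i=1}^{m}n_{i}=\chi[\mathfrak{L}^{\Phi};\lambda_{0}],
\end{equation*}
the $P_{i}$ being rank-one projections with $P_{i}P_{j}=0$ for $i\neq j$. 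Since the parity is unaffected by pre- and post-composition with curves of isomorphisms \cite{FP2}, $\sigma(\mathfrak{L}^{\Phi},[\lambda_{0}-\eta,\lambda_{0}+\eta])=\sigma(\mathfrak{D},[\lambda_{0}-\eta,\lambda_{0}+\eta])$ for $\eta$ small. Now $\mathfrak{D}$ splits as the direct sum of the one-dimensional analytic blocks $\lambda\mapsto(\lambda-\lambda_{0})^{n_{i}}$ and the identity on a complement; by Theorem \ref{th3.3} the parity of the $i$-th block is $(-1)^{n_{i}}$ (its only eigenvalue is $\lambda_{0}$, with $\chi[(\lambda-\lambda_{0})^{n_{i}};\lambda_{0}]=\ord_{\lambda_{0}}(\lambda-\lambda_{0})^{n_{i}}=n_{i}$), and, since the parity of a direct sum is the product of the parities, we obtain
\begin{equation*}
\sigma(\mathfrak{L},\lambda_{0})=\sigma(\mathfrak{D},[\lambda_{0}-\eta,\lambda_{0}+\eta])=\prod_{i=1}^{m}(-1)^{n_{i}}=(-1)^{\sum_{i}n_{i}}=(-1)^{\chi[\mathfrak{L}^{\Phi};\lambda_{0}]}=(-1)^{\chi[\mathfrak{L};\lambda_{0}]}.
\end{equation*}

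The delicate point is the justification of these last two reductions: one has to verify that the transversalizing composition $H$ and the Smith-form factorization are genuinely admissible on a sufficiently small interval — so that the parity is indeed left unchanged — and that neither of them introduces spurious eigenvalues near $\lambda_{0}$; equivalently, that an analytic representative of $\mathfrak{L}|_{[\lambda_{0}-\eta,\lambda_{0}+\eta]}$ of the kind provided by Theorem \ref{th3.3} can be chosen with $\lambda_{0}$ as its sole eigenvalue and with algebraic multiplicity there equal to $\chi[\mathfrak{L};\lambda_{0}]$. Once the problem is reduced to the scalar model $\mathfrak{D}$, the remaining computation is immediate.
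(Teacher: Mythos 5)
Your argument is sound and, as far as I can tell, complete up to the one point you yourself flag. The paper does not actually prove Corollary \ref{cr4.6}: it is quoted from \cite{LSA} as ``a consequence of Theorem \ref{th3.3}'', so there is no internal proof to compare against. Your route --- localize, pass to the transversalized path $\mathfrak{L}^{\Phi}=\mathfrak{L}(\cdot)\Phi(\cdot)$ via the admissible homotopy $H(s,\lambda)=\mathfrak{L}(\lambda)[(1-s)I_{X}+s\Phi(\lambda)]$, and then read the parity off a diagonal local model --- is exactly in the spirit of the machinery the paper deploys in the proof of Theorem \ref{th4.1}, where an analytic germ in $\mathcal{C}^{\omega}_{\lambda_0}$ is factored as $\mathfrak{I}(\lambda)\circ\mathfrak{C}_{\Pi_{0}}(\lambda-\lambda_{0})\circ\cdots\circ\mathfrak{C}_{\Pi_{k-1}}(\lambda-\lambda_{0})$ with $\sum_{i}\mathrm{rank}\,\Pi_{i}=\chi[\mathfrak{L};\lambda_{0}]$ (Corollary 5.3.2(b) of \cite{LGMC}). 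Your homotopy step is correctly justified: $I_{X}+s(\Phi(\lambda)-I_{X})$ is invertible by the Neumann series once $\|\Phi(\lambda)-I_{X}\|<1$, the values stay in $\Phi_{0}(X,Y)$, and the endpoints stay in $GL(X,Y)$, so admissible homotopy invariance of the parity applies.

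The only place where you lean on a statement whose exact form for finite regularity needs checking is the local Smith form: for $\mathfrak{L}\in\mathcal{C}^{r}$ with $r<\infty$ the factorization in \cite[Ch.~7]{LGMC} may carry a higher-order remainder, and you would then need one more admissible homotopy to discard it (or a regularity hypothesis relating $r$ to the largest partial multiplicity, which here is $k\le r$, so this should go through). A cleaner way to close the argument, bypassing the Smith form entirely, is to use the multiplicative decomposition of Corollary 5.3.2(b) of \cite{LGMC} quoted above, which is precisely what the paper itself invokes in Theorem \ref{th4.1}: it expresses the (transversalized) germ as an invertible factor times a product of paths $\mathfrak{C}_{P}(\lambda-\lambda_{0})=(\lambda-\lambda_{0})P+I_{X}-P$ with $P$ of rank one, and multiplicativity of the parity under operator composition plus $\sigma(\mathfrak{C}_{P},\lambda_{0})=-1$ gives $(-1)^{\chi[\mathfrak{L};\lambda_{0}]}$ at once. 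Finally, note that your last step does not need the ``parity of a direct sum'' claim: $\mathfrak{D}$ is itself an analytic admissible path with sole eigenvalue $\lambda_{0}$ and $\chi[\mathfrak{D};\lambda_{0}]=\sum_{i}n_{i}$ by the product formula, so Theorem \ref{th3.3} applied to $\mathfrak{D}$ directly yields $\sigma(\mathfrak{D},[\lambda_{0}-\eta,\lambda_{0}+\eta])=(-1)^{\sum_{i}n_{i}}$.
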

	
The identity \eqref{c.2} establishes the precise relationship between the topological notion of parity and the algebraic concept of multiplicity. The importance of Corollary \ref{cr4.6} relies on the fact that, since the localized parity detects any change of orientation, \eqref{c.2} makes intrinsic to the concept of algebraic multiplicity any change of the local degree.
As the principal difficulty to introduce a topological degree for Fredholm operators of index zero is the absence of orientations in the space of linear isomorphisms $GL(X,Y)\subset\Phi_{0}(X,Y)$, the notion introduced in the next definition, going back to Fitzpatrick, Pejsachowicz and Rabier \cite{FPR},
restricts the space of operators to subsets of $\Phi_{0}(X,Y)$ where is possible to introduce a notion of \emph{orientability}.

\begin{definition}
	\label{de3.5}
	A subset $\mathcal{O}\subset\Phi_{0}(X,Y)$ is said to be \textbf{orientable} if there exists a map $\varepsilon: \mathcal{O}\cap GL(X,Y) \to \mathbb{Z}_{2}$, called orientation, such that
	\begin{equation}
	\label{c.3}
	\sigma(\mathfrak{L},[a,b])=\varepsilon(\mathfrak{L}(a))\cdot \varepsilon(\mathfrak{L}(b)) \quad
	\hbox{for all} \;\; \mathfrak{L}\in \mathscr{C}([a,b],\mathcal{O}).
	\end{equation}
\end{definition}

Observe that if $\mathcal{O}\cap GL(X,Y)=\emptyset$, then $\mathcal{O}$ is trivially orientable. Actually, rather than orienting subsets of $\Phi_{0}(X,Y)$, in \cite{FPR} were oriented, instead,  maps $h:\Lambda\to\Phi_{0}(X,Y)$, where $\Lambda$ is a given topological space. However, if $h(\Lambda)\subset \Phi_{0}(X,Y)$ is orientable in the sense of Definition \ref{de3.5} with orientation $\varepsilon$, necessarily $h:\Lambda\to\Phi_{0}(X,Y)$ is orientable in the sense of \cite{FPR} with orientation $\varepsilon(h(\cdot))$.
\par
Since the parity of a Fredholm curve $\mathfrak{L}$ can be regarded as a generalized local detector of any change of orientation, it is natural to define an orientation $\varepsilon$ of a subset $\mathcal{O}$ of
$\Phi_{0}(X,Y)$ as a map satisfying \eqref{c.3}. Indeed, owing to \eqref{c.3},  $\sigma(\mathfrak{L},[a,b])=-1$ if $\varepsilon(\mathfrak{L}(a))$ and $\varepsilon(\mathfrak{L}(b))$ have contrary sign. Also, note that if $\mathcal{O}$ is an orientable subset of $\Phi_{0}(X,Y)$ with orientation $\varepsilon$, then $\varepsilon$ is locally constant, i.e., $\varepsilon$ is constant on each path connected component of $\mathcal{O} \cap GL(X,Y)$. This is a rather natural property of an orientation.
\par
When $\mathcal{O}\subset \Phi_{0}(X,Y)$ is orientable and $\mathcal{O}\cap GL(X,Y)\neq\emptyset$, then there are, exactly, two different orientations in $\mathcal{O}$. Precisely, given $T\in\mathcal{O} \cap GL(X,Y)$, the two orientations of $\mathcal{O}$ are defined by
\begin{equation}
\label{c.5}
\varepsilon^{\pm}: \mathcal{O} \cap GL(X,Y) \longrightarrow \mathbb{Z}_{2}, \quad
L \mapsto \pm \sigma(\mathfrak{L}_{LT},[a,b])
\end{equation}
where $\mathfrak{L}_{LT}\in \mathcal{C}([a,b],\mathcal{O})$ is an arbitrary  Fredholm path linking $L$ to $T$, and the sign $\pm$ determines the orientation of the path connected component of $T$, i.e., if we choose $\varepsilon^+$, then the orientation of the path connected component of $T$ is $1$, whereas it is
$-1$ if $\varepsilon^-$ is chosen. Lastly, note that any subset of an orientable set of operators is also orientable with the restricted orientation map.
\par
According to \cite{FPR}, any simply connected subset $\mathcal{O}$ of  $\Phi_{0}(X,Y)$
is orientable. So, the set of orientable subsets of $\Phi_{0}(X,Y)$ is really large. More generally, $\mathcal{O}$ is orientable if its $\mathbb{Z}_{2}$-cohomology group $H^{1}(\mathcal{O},\mathbb{Z}_{2})$ is trivial.
\par
The next result, going back to  \cite{LSA}, justifies the geometrical interpretation of the parity as a local detector of change of orientation for the operators of a Fredholm path.

\begin{proposition}
	\label{pr3.7}
	Let $\mathcal{O}$ be an orientable subset of $\Phi_{0}(X,Y)$ and $\mathfrak{L}\in\mathscr{C}([a,b],\mathcal{O})$. Then, $\sigma(\mathfrak{L},[a,b])=-1$ if, and only if, $\mathfrak{L}(a)$ and $\mathfrak{L}(b)$ lye in different path connected components of $\mathcal{O}\cap GL(X,Y)$ with opposite orientations.
\end{proposition}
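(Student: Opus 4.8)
The plan is to read off the statement almost directly from Definition~\ref{de3.5}, together with the fact recorded above that an orientation is locally constant. Fix an orientation $\varepsilon:\mathcal{O}\cap GL(X,Y)\to\mathbb{Z}_{2}$ of $\mathcal{O}$, where we regard $\mathbb{Z}_{2}=\{+1,-1\}$ multiplicatively, so that \eqref{c.3} reads $\sigma(\mathfrak{M},[a,b])=\varepsilon(\mathfrak{M}(a))\,\varepsilon(\mathfrak{M}(b))$ for every $\mathfrak{M}\in\mathscr{C}([a,b],\mathcal{O})$. Since $\mathfrak{L}\in\mathscr{C}([a,b],\mathcal{O})$, the endpoints $\mathfrak{L}(a),\mathfrak{L}(b)$ automatically lie in $\mathcal{O}\cap GL(X,Y)$, so in particular $\varepsilon(\mathfrak{L}(a))$ and $\varepsilon(\mathfrak{L}(b))$ are defined and the degenerate case $\mathcal{O}\cap GL(X,Y)=\emptyset$ does not arise. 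Because $\varepsilon$ is constant on each path-connected component of $\mathcal{O}\cap GL(X,Y)$, it descends to a $\{+1,-1\}$-valued function on the set of such components; this induced value is what we mean by the \emph{orientation} of a component. With this convention, the assertion ``$\mathfrak{L}(a)$ and $\mathfrak{L}(b)$ lie in different path-connected components of $\mathcal{O}\cap GL(X,Y)$ with opposite orientations'' is precisely the statement $\varepsilon(\mathfrak{L}(a))\neq\varepsilon(\mathfrak{L}(b))$.

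For the forward implication, suppose $\sigma(\mathfrak{L},[a,b])=-1$. Then \eqref{c.3} gives $\varepsilon(\mathfrak{L}(a))\,\varepsilon(\mathfrak{L}(b))=-1$, and since both factors lie in $\{+1,-1\}$ this forces $\varepsilon(\mathfrak{L}(a))=-\varepsilon(\mathfrak{L}(b))\neq\varepsilon(\mathfrak{L}(b))$. By local constancy of $\varepsilon$, points with distinct $\varepsilon$-values cannot lie in the same path-connected component of $\mathcal{O}\cap GL(X,Y)$; hence $\mathfrak{L}(a)$ and $\mathfrak{L}(b)$ lie in different components, and by construction these components carry opposite orientations.

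For the converse, assume $\mathfrak{L}(a)$ and $\mathfrak{L}(b)$ lie in different path-connected components of $\mathcal{O}\cap GL(X,Y)$ with opposite orientations, i.e.\ $\varepsilon(\mathfrak{L}(a))=-\varepsilon(\mathfrak{L}(b))$. Then, again by \eqref{c.3},
\[
\sigma(\mathfrak{L},[a,b])=\varepsilon(\mathfrak{L}(a))\,\varepsilon(\mathfrak{L}(b))=-\varepsilon(\mathfrak{L}(b))^{2}=-1,
\]
since $\varepsilon(\mathfrak{L}(b))\in\{+1,-1\}$; note that here the hypothesis ``different components'' is in fact automatic from ``opposite orientations,'' so nothing is lost. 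This completes the argument.

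I do not anticipate a genuine obstacle: the content of the proposition is essentially the translation of the defining relation \eqref{c.3} into the language of path-connected components, once one knows an orientation is locally constant (established above). The only points needing a little care are the bookkeeping of the multiplicative $\mathbb{Z}_{2}$-convention, the passage from $\varepsilon$ as a function on $\mathcal{O}\cap GL(X,Y)$ to an orientation assigned to each component, and the remark that admissibility of $\mathfrak{L}$ guarantees its endpoints lie where $\varepsilon$ is defined.
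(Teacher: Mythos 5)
Your argument is correct: the proposition is an immediate consequence of the defining relation \eqref{c.3} together with the local constancy of $\varepsilon$ on the path-connected components of $\mathcal{O}\cap GL(X,Y)$, and you handle both directions and the relevant bookkeeping (admissibility placing the endpoints in $GL(X,Y)$, the multiplicative $\mathbb{Z}_2$ convention) cleanly. The paper itself states this result without proof, citing \cite{LSA}, so there is nothing to compare against; your direct unwinding of Definition \ref{de3.5} is exactly the intended argument.
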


Finally, the next result, going back as well to \cite{LSA},
reduces the problem of detecting any change of orientation
to the problem of the computation of the local multiplicity. It allows to interpret the algebraic multiplicity as a local detector of change of orientation for the operators of a Fredholm path.
Given $\mathfrak{L}\in\mathcal{C}([a,b],\Phi_{0}(X,Y))$ and $\delta>0$, an isolated eigenvalue $\lambda_{0}\in\Sigma(\mathfrak{L})$ is said to be \emph{$\delta$-isolated} if
$$
\Sigma(\mathfrak{L})\cap[\lambda_{0}-\delta,\lambda_{0}+\delta]=\{\lambda_{0}\}.
$$

\begin{theorem}
	\label{th3.8}
Suppose that $\mathcal{O}\subset\Phi_{0}(X,Y)$ is an orientable subset, $\mathfrak{L}\in\mathcal{C}([a,b],\mathcal{O})$  is a Fredholm curve and $\lambda_{0}\in\Sigma(\mathfrak{L})$ a $\delta$-isolated eigenvalue. Then, the next assertions are equivalent:
	\begin{enumerate}
		\item[{\rm (a)}] $\sum_{\lambda\in\Sigma(\mathfrak{L}_{\omega})}\chi[\mathfrak{L}_{\omega};\lambda_{0}]$ is odd for any analytical path $\mathfrak{L}_{\omega}\in \mathscr{C}^{\omega}([\lambda_{0}-\delta,\lambda_{0}+\delta],\Phi_{0}(X,Y))$ such that $\mathfrak{L}\vert_{[\lambda_{0}-\delta,\lambda_{0}+\delta]}$ and $\mathfrak{L}_\omega$ are $\mathcal{A}$-homotopic.
		\item[{\rm (b)}] $\mathfrak{L}(\lambda_{0}-\delta)$ and $\mathfrak{L}(\lambda_{0}+\delta)$ live in different path-connected components of $\mathcal{O}\cap GL(X,Y)$ with opposite orientations.
	\end{enumerate}
\end{theorem}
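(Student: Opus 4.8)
The plan is to derive Theorem~\ref{th3.8} by combining the computation of the parity through the algebraic multiplicity $\chi$ (Theorem~\ref{th3.3}) with the geometric characterization of a parity equal to $-1$ provided by Proposition~\ref{pr3.7}, the bridge between the two being the invariance of the parity under admissible homotopies.

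First I would record that, since $\lambda_{0}$ is $\delta$-isolated, both endpoints $\mathfrak{L}(\lambda_{0}-\delta)$ and $\mathfrak{L}(\lambda_{0}+\delta)$ belong to $GL(X,Y)$, so that the restriction $\mathfrak{L}_{\delta}:=\mathfrak{L}\vert_{[\lambda_{0}-\delta,\lambda_{0}+\delta]}$ is an admissible Fredholm path, $\mathfrak{L}_{\delta}\in\mathscr{C}([\lambda_{0}-\delta,\lambda_{0}+\delta],\mathcal{O})$. By Theorem~\ref{th3.3}, $\mathfrak{L}_{\delta}$ is $\mathcal{A}$-homotopic to some analytic Fredholm curve $\mathfrak{L}_{\omega}\in\mathscr{C}^{\omega}([\lambda_{0}-\delta,\lambda_{0}+\delta],\Phi_{0}(X,Y))$, and for every such $\mathfrak{L}_{\omega}$,
\[
\sigma(\mathfrak{L}_{\delta},[\lambda_{0}-\delta,\lambda_{0}+\delta])=(-1)^{\sum_{\lambda\in\Sigma(\mathfrak{L}_{\omega})}\chi[\mathfrak{L}_{\omega};\lambda]}.
\]
Since the parity is invariant under admissible homotopies and any two analytic curves that are $\mathcal{A}$-homotopic to $\mathfrak{L}_{\delta}$ are $\mathcal{A}$-homotopic to each other, the oddness of the integer $\sum_{\lambda\in\Sigma(\mathfrak{L}_{\omega})}\chi[\mathfrak{L}_{\omega};\lambda]$ does not depend on the analytic representative chosen; in particular the quantifier ``for any $\mathfrak{L}_{\omega}$'' in assertion (a) is equivalent to ``for some $\mathfrak{L}_{\omega}$''. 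Hence (a) holds if and only if $\sigma(\mathfrak{L}_{\delta},[\lambda_{0}-\delta,\lambda_{0}+\delta])=-1$.

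It then remains to identify assertion (b) with the condition $\sigma(\mathfrak{L}_{\delta},[\lambda_{0}-\delta,\lambda_{0}+\delta])=-1$, and this is exactly Proposition~\ref{pr3.7} applied to the orientable subset $\mathcal{O}$ and to the admissible path $\mathfrak{L}_{\delta}\in\mathscr{C}([\lambda_{0}-\delta,\lambda_{0}+\delta],\mathcal{O})$: that proposition states that $\sigma(\mathfrak{L}_{\delta},[\lambda_{0}-\delta,\lambda_{0}+\delta])=-1$ precisely when the endpoints $\mathfrak{L}(\lambda_{0}-\delta)$ and $\mathfrak{L}(\lambda_{0}+\delta)$ lie in different path-connected components of $\mathcal{O}\cap GL(X,Y)$ with opposite orientations. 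Chaining the two equivalences gives (a)~$\Leftrightarrow$~(b). The chaining itself is essentially bookkeeping once Theorem~\ref{th3.3} and Proposition~\ref{pr3.7} are in hand; the only point that needs genuine care is the well-definedness remark above, namely that the parity of $\sum_{\lambda\in\Sigma(\mathfrak{L}_{\omega})}\chi[\mathfrak{L}_{\omega};\lambda]$ is independent of the analytic representative, for otherwise assertion (a) would be ambiguous. I do not expect this to require any new argument beyond the $\mathcal{A}$-homotopy invariance of $\sigma$ already available from \cite{FP1,FP2,LSA}.
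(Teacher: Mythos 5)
Your argument is correct: the $\delta$-isolation of $\lambda_{0}$ makes the restriction $\mathfrak{L}\vert_{[\lambda_{0}-\delta,\lambda_{0}+\delta]}$ an admissible path in $\mathcal{O}$, Theorem~\ref{th3.3} (whose ``for any of these analytic paths'' clause already settles the well-definedness issue you flag) identifies assertion (a) with $\sigma(\mathfrak{L},[\lambda_{0}-\delta,\lambda_{0}+\delta])=-1$, and Proposition~\ref{pr3.7} identifies that condition with assertion (b). The paper itself gives no proof of Theorem~\ref{th3.8}, deferring it to \cite{LSA}, but your chaining of Theorem~\ref{th3.3} and Proposition~\ref{pr3.7} is exactly the intended deduction from the results assembled in Section~3.
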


\section{Axiomatization and Uniqueness of the Parity}

The aim of this section is axiomatizing the concept of \emph{parity}. So, establishing its uniqueness. Our
axiomatization is based upon Theorem \ref{th2.1} and Corollary \ref{cr4.6}. Thanks to this axiomatization, we are establishing the uniqueness of a local detector of change of orientability. We will begin by  axiomatizing the parity as a local object. Then, we will do it in a global setting.
\par
Subsequently, for any interval $\mathcal{I}\subset \mathbb{R}$ and $\lambda_{0}\in \text{Int\,}\mathcal{I}$, we will denote by $\mathcal{C}^{\omega}_{\lambda_{0}}(\mathcal{I},\Phi_{0}(X,Y))$ the space of all the analytic paths  $\mathfrak{L}\in \mathcal{C}^{\omega}(\mathcal{I},\Phi_{0}(X,Y))$ such that $\mathfrak{L}(\lambda)\in GL(X,Y)$ for $\lambda\neq\lambda_{0}$.

\begin{theorem}
\label{th4.1}
For every $\varepsilon>0$ and $\lambda_{0}\in\mathbb{R}$, there exists a unique $\mathbb{Z}_{2}$-valued map $$
   \sigma(\cdot,\lambda_{0}):\;\; \mathcal{C}^\omega_{\lambda_0}\equiv  \mathcal{C}^{\omega}_{\lambda_0}((\lambda_{0}-\varepsilon,\lambda_{0}+\varepsilon),\Phi_{0}(X))
   \longrightarrow\mathbb{Z}_{2}
$$
such that:
\begin{enumerate}
		\item[{\rm (N)}]  \textbf{Normalization:}  $\sigma(\mathfrak{L},\lambda_{0})=1$ if $\mathfrak{L}(\lambda_{0})\in GL(X)$, and there exists a rank one projection $P_{0}\in\mathcal{L}(X)$ such that $\sigma(\mathfrak{E},\lambda_{0})=-1$ where
\begin{equation*}
		\mathfrak{E}(\lambda):=(\lambda-\lambda_{0})P_{0}+I_{X}-P_{0}.
\end{equation*}
		
\item[{\rm (P)}]  \textbf{Product Formula:} For every  $\mathfrak{L}, \mathfrak{M}\in\mathcal{C}^{\omega}_{\lambda_{0}}$,
\begin{equation*} \sigma(\mathfrak{L}\circ\mathfrak{M},\lambda_{0})=\sigma(\mathfrak{L},\lambda_{0})
\cdot\sigma(\mathfrak{M},\lambda_{0}).
\end{equation*}
\end{enumerate}
Moreover, for every $\mathfrak{L}\in\mathcal{C}^{\omega}_{\lambda_{0}}$, the parity map is given by
\begin{equation*}
	\sigma(\mathfrak{L},\lambda_{0})=(-1)^{\chi[\mathfrak{L};\lambda_{0}]}.
\end{equation*}
\end{theorem}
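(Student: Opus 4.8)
The plan is to reduce Theorem \ref{th4.1} to the already-established uniqueness result for the generalized algebraic multiplicity, namely Theorem \ref{th2.1}. The key observation is that the map $\chi[\cdot;\lambda_{0}]$ takes values in $[0,\infty]$, but when restricted to the smaller class $\mathcal{C}^{\omega}_{\lambda_0}$ of \emph{analytic} paths that are invertible off $\lambda_0$, it takes only finite values; indeed, by the analyticity dichotomy recalled after \eqref{b.6} (Theorems 4.4.1 and 4.4.4 of \cite{LG01}), for such a path either $\Sigma(\mathfrak{L})=(\lambda_0-\varepsilon,\lambda_0+\varepsilon)$ — which is excluded since $\mathfrak{L}(\lambda)\in GL(X)$ for $\lambda\neq\lambda_0$ — or $\Sigma(\mathfrak{L})\subset\Alg(\mathfrak{L})$, so that $\lambda_0\in\Alg_k(\mathfrak{L})$ for some $k$ and $\chi[\mathfrak{L};\lambda_0]<\infty$. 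Hence the composite $(-1)^{\chi[\cdot;\lambda_0]}:\mathcal{C}^{\omega}_{\lambda_0}\to\mathbb{Z}_2$ is well defined, and Corollary \ref{cr4.6} identifies it with $\sigma(\cdot,\lambda_0)$.

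First I would prove \emph{existence}: set $\sigma(\mathfrak{L},\lambda_0):=(-1)^{\chi[\mathfrak{L};\lambda_0]}$ and verify (N) and (P). For (N), when $\mathfrak{L}(\lambda_0)\in GL(X)$ we have $\chi[\mathfrak{L};\lambda_0]=0$ by the listed properties of $\chi$, so $\sigma=1$; and for the elementary path $\mathfrak{E}(\lambda)=(\lambda-\lambda_0)P_0+I_X-P_0$ with $P_0$ a rank-one projection, a direct computation (or axiom (N) of Theorem \ref{th2.1}) gives $\chi[\mathfrak{E};\lambda_0]=1$, hence $\sigma(\mathfrak{E},\lambda_0)=-1$. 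For (P), the product formula $\chi[\mathfrak{L}\circ\mathfrak{M};\lambda_0]=\chi[\mathfrak{L};\lambda_0]+\chi[\mathfrak{M};\lambda_0]$ (axiom (P) of Theorem \ref{th2.1}) immediately yields $\sigma(\mathfrak{L}\circ\mathfrak{M},\lambda_0)=\sigma(\mathfrak{L},\lambda_0)\cdot\sigma(\mathfrak{M},\lambda_0)$, where one must note that composition of analytic paths invertible off $\lambda_0$ is again such a path.

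Next I would prove \emph{uniqueness}: suppose $\tilde\sigma(\cdot,\lambda_0):\mathcal{C}^{\omega}_{\lambda_0}\to\mathbb{Z}_2$ satisfies (N) and (P). The strategy is to lift $\tilde\sigma$ to a candidate multiplicity and invoke Theorem \ref{th2.1}. The obstruction is that Theorem \ref{th2.1} characterizes a map on all of $\mathcal{C}^{\infty}((\lambda_0-\varepsilon,\lambda_0+\varepsilon),\Phi_0(X))$ with values in $[0,\infty]$, whereas $\tilde\sigma$ only sees the $\mathbb{Z}_2$-reduction and only on analytic paths. So the cleaner route is to argue directly. Given $\mathfrak{L}\in\mathcal{C}^{\omega}_{\lambda_0}$ with $\lambda_0\in\Alg_k(\mathfrak{L})$, use the transversalizing local isomorphism $\Phi$ from \eqref{b.7}, with $\Phi(\lambda_0)=I_X$, so that $\lambda_0$ is a $k$-transversal eigenvalue of $\mathfrak{L}^\Phi=\mathfrak{L}\circ\Phi$ and $\chi[\mathfrak{L};\lambda_0]=\chi[\mathfrak{L}^\Phi;\lambda_0]$. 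Since $\Phi(\lambda_0)=I_X$ is invertible and $\Phi$ is polynomial, $\Phi(\lambda)\in GL(X)$ for $\lambda$ near $\lambda_0$ (shrinking $\varepsilon$), so $\Phi\in\mathcal{C}^{\omega}_{\lambda_0}$-type with $\Phi(\lambda_0)\in GL(X)$, hence $\tilde\sigma(\Phi,\lambda_0)=1$ by (N); then (P) gives $\tilde\sigma(\mathfrak{L},\lambda_0)=\tilde\sigma(\mathfrak{L}^\Phi,\lambda_0)$. Thus it suffices to show $\tilde\sigma$ agrees with $(-1)^{\chi[\cdot;\lambda_0]}$ on $k$-transversal paths, and for these one has an explicit normal form: $\mathfrak{L}^\Phi$ is, up to composition with paths invertible at $\lambda_0$ (on which $\tilde\sigma=1$), a composition of $\chi[\mathfrak{L}^\Phi;\lambda_0]$ elementary factors of the type $\mathfrak{E}$ above (this is precisely the content of the local Smith form / the construction underlying Theorem \ref{th2.1} in \cite{MC,LGMC}), whence by (P) and (N), $\tilde\sigma(\mathfrak{L}^\Phi,\lambda_0)=(-1)^{\chi[\mathfrak{L}^\Phi;\lambda_0]}$.

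The main obstacle is this last step — establishing the \emph{normal form} for $k$-transversal (equivalently, via $\Phi$, for general analytic $\mathcal{C}^{\omega}_{\lambda_0}$) paths as a product of elementary one-dimensional factors modulo units. This is exactly where the real content of Mora-Corral's uniqueness theorem lies, and the honest way to discharge it is to appeal to Theorem \ref{th2.1}: any $\tilde\sigma$ satisfying (N) and (P) would, through the reduction above, be forced to coincide with $(-1)^{\chi[\cdot;\lambda_0]}$ because the factorization data is intrinsic to $\chi$. Concretely, I would: (i) fix $\mathfrak{L}\in\mathcal{C}^{\omega}_{\lambda_0}$ and reduce via $\Phi$ to a $k$-transversal path; (ii) invoke the local Smith factorization from \cite[Ch.~6--7]{LGMC} to write the transversalized path as a finite product of factors, each either a unit at $\lambda_0$ or conjugate to $\mathfrak{E}$; (iii) apply (N) and (P) of the hypothetical $\tilde\sigma$ to evaluate it on this product, obtaining $(-1)^{\chi[\mathfrak{L};\lambda_0]}$; (iv) conclude $\tilde\sigma=\sigma$. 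Existence having been checked above, this completes the proof.
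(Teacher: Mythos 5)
Your proposal is correct and follows essentially the same route as the paper: both arguments rest on factoring an analytic path of $\mathcal{C}^{\omega}_{\lambda_0}$ as a globally invertible path composed with elementary rank-one factors $(\lambda-\lambda_0)P+I_X-P$ (the local Smith form, Corollary 5.3.2 of \cite{LGMC}), conjugating each rank-one projection to $P_0$, and then evaluating via (P) and (N) to force $\tilde\sigma(\mathfrak{L},\lambda_0)=(-1)^{\sum_i\mathrm{rank}\,\Pi_i}=(-1)^{\chi[\mathfrak{L};\lambda_0]}$. The only differences are cosmetic: you verify existence explicitly (the paper leaves it implicit) and insert a preliminary transversalization step via $\Phi$, whereas the paper invokes the factorization result directly, which already subsumes that reduction.
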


\begin{proof}
First, we will prove that,  for every rank one projection $P\in\mathcal{L}(X)$, setting
$$
  \mathfrak{F}(\lambda)=(\lambda-\lambda_{0})P+I_{X}-P,
$$
one has that
\begin{equation}
\label{d.1}
   \sigma(\mathfrak{F},\lambda_{0})=-1.
\end{equation}
Indeed,
By Lemma 6.1.1 of \cite{LGMC}, there exists $T\in GL(X)$ such that $P=T^{-1}P_{0}T$. Thus,
$$
  \mathfrak{F}(\lambda)=T^{-1}[(\lambda-\lambda_{0})P_{0}+I_{X}-P_{0}]T=T^{-1}\mathfrak{E}(\lambda)T
$$
and hence, by axioms (P) and (N),
\begin{equation*}
 \sigma(\mathfrak{L},\lambda_{0})=\sigma(T^{-1},\lambda_{0})\cdot
 \sigma(\mathfrak{E}(\lambda),\lambda_{0})\cdot\sigma(T,\lambda_{0})=-1.
\end{equation*}
On the other hand, for any given $\mathfrak{L}\in\mathcal{C}^{\omega}_{\lambda_{0}}$, by Corollary 5.3.2(b) of \cite{LGMC}, which goes back to the proof of Theorem 5.3.1 of \cite{LG01}, there exist $k$ finite-rank projections $\Pi_{0},\Pi_{2},\cdots,\Pi_{k-1}\in\mathcal{L}(X)$ and a (globally invertible) path  $\mathfrak{I}\in\mathcal{C}^{\omega}((\lambda_{0}-\varepsilon,\lambda_{0}+\varepsilon),GL(X))$
such that, setting
\begin{equation*}
	\mathfrak{C}_{\Pi_{i}}(t):=t\, \Pi_{i}+I_{X}-\Pi_{i},\qquad i\in\{0,1,...,k-1\}, \;\; t\in\mathbb{R},
\end{equation*}
one has that
\begin{equation*}
\mathfrak{L}(\lambda)=\mathfrak{I}(\lambda)\circ\mathfrak{C}_{\Pi_{0}}
(\lambda-\lambda_{0})\circ\mathfrak{C}_{\Pi_{1}}(\lambda-\lambda_{0})\circ \cdots\circ
\mathfrak{C}_{\Pi_{k-1}}(\lambda-\lambda_{0}).
\end{equation*}
Moreover, for every $i\in\{0,1,...,k-1\}$, there are $r_{i}=\text{rank}\,\Pi_{i}$ projections of rank one, $P_{j,i}$, $1\leq j \leq r_{i}$, such that
\begin{equation*}	 \mathfrak{C}_{\Pi_{i}}=\mathfrak{C}_{P_{1,i}}\circ\mathfrak{C}_{P_{2,i}}\circ\cdots\circ
\mathfrak{C}_{P_{r_{i},i}}.
\end{equation*}
Consequently, we find from the axiom (P) and \eqref{d.1} that
\begin{equation*}	
\sigma(\mathfrak{C}_{\Pi_{i}},\lambda_{0})=
\sigma(\mathfrak{C}_{P_{1,i}},\lambda_{0})\cdots\sigma(\mathfrak{C}_{P_{r_{i},i}},\lambda_{0})
=(-1)^{r_{i}}=(-1)^{\mathrm{rank}\,\Pi_{i}}
\end{equation*}
and, therefore, applying again the axiom (P) yields
\begin{equation*}	
\sigma(\mathfrak{L},\lambda_{0})=\sigma(\mathfrak{J},\lambda_{0})
\cdot\sigma(\mathfrak{C}_{\Pi_{0}},\lambda_{0})\cdots\sigma(\mathfrak{C}_{\Pi_{k-1}},\lambda_{0})
=(-1)^{\sum_{i=0}^{k-1}{\mathrm{rank}\,\Pi_{i}}}.
\end{equation*}
Finally, since owing to Corollary 5.3.2 of \cite{LGMC}, we  have that
$$
\chi[\mathfrak{L};\lambda_{0}]=\sum_{i=0}^{k-1}{\mathrm{rank}\,\Pi_{i}},
$$
it becomes apparent that
\begin{equation*}
	\sigma(\mathfrak{L},\lambda_{0})=(-1)^{\chi[\mathfrak{L};\lambda_{0}]}.
\end{equation*}
This concludes the proof.
\end{proof}

Once the local parity is determined, we will give the global axiomatization. A pair $(\mathfrak{L},[a,b])$ is said to be \emph{admissible} if $\mathfrak{L}\in\mathscr{C}([a,b],\Phi_{0}(X))$. The set of admissible pairs will be denoted by $\mathscr{A}$.

\begin{theorem}
\label{th4.2}
	There exists a unique $\mathbb{Z}_{2}$-valued map $\sigma:\mathscr{A}\to\mathbb{Z}_{2}$ such that
\begin{enumerate}
		\item[{\rm (N)}]  \textbf{Normalization:} For every $\mathfrak{L}\in\mathcal{C}^{\omega}_{\lambda_0}([\lambda_{0}-\eta,\lambda_{0}+\eta],\Phi_{0}(X))$,
\begin{equation*}		 \sigma(\mathfrak{L},[\lambda_{0}-\eta,\lambda_{0}+\eta])=(-1)^{\chi[\mathfrak{L};\lambda_{0}]}.
\end{equation*}
		
\item[{\rm (P)}]  \textbf{Product Formula:} For every $(\mathfrak{L},[a,b])\in\mathscr{A}$ and $c\in(a,b)$ such that $c\notin\Sigma(\mathfrak{L})$,
\begin{equation*}
		\sigma(\mathfrak{L},[a,b])=\sigma(\mathfrak{L},[a,c])\cdot\sigma(\mathfrak{L},[c,b]).
\end{equation*}
		
\item[{\rm (H)}]  \textbf{Homotopy Invariance:} For every homotopy  $H\in\mathcal{C}([0,1]\times[a,b],\Phi_{0}(X))$
    such that $(H(t,\cdot),[a,b])\in\mathscr{A}$ for all $t\in [0,1]$,
\begin{equation*}
		\sigma(H(0,\cdot),[a,b])=\sigma(H(1,\cdot),[a,b]).
\end{equation*}		
\end{enumerate}
Moreover, $\sigma(\mathfrak{L},[a,b])$ equals the parity map introduced by Fitzpatrick and Pejsachowitz in \cite{FP2}.
\end{theorem}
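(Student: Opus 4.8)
The plan is to prove existence and uniqueness separately, building on Theorem~\ref{th3.3}, on Corollary~\ref{cr4.6}, and on the three axioms themselves.

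For \emph{existence} I would take $\sigma$ to be the Fitzpatrick--Pejsachowicz parity $\sigma(\cdot,[a,b])$ of Section~3, defined for every admissible pair $(\mathfrak{L},[a,b])\in\mathscr{A}$ via density of $\mathscr{C}$-transversal paths; call it $\sigma_{\mathrm{FP}}$ for the moment. Its additivity over subintervals is axiom (P), and its invariance under admissible homotopies is axiom (H) — a homotopy $H\in\mathcal{C}([0,1]\times[a,b],\Phi_{0}(X))$ with $(H(t,\cdot),[a,b])\in\mathscr{A}$ for all $t$ being exactly an admissible ($\mathcal{A}$-)homotopy — and both are among the properties of the parity established in \cite{FP1,FP2} and recalled in Section~3. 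The only axiom requiring an argument is (N): if $\mathfrak{L}\in\mathcal{C}^{\omega}_{\lambda_{0}}([\lambda_{0}-\eta,\lambda_{0}+\eta],\Phi_{0}(X))$, then $\lambda_{0}$ is the only possible eigenvalue of $\mathfrak{L}$ in $[\lambda_{0}-\eta,\lambda_{0}+\eta]$, so by (P) the value $\sigma_{\mathrm{FP}}(\mathfrak{L},[\lambda_{0}-\eta,\lambda_{0}+\eta])$ is independent of $\eta$ (the discarded end intervals meet no eigenvalue, hence have parity $1$); letting $\eta\downarrow 0$ identifies it with the localized parity $\sigma_{\mathrm{FP}}(\mathfrak{L},\lambda_{0})$, and Corollary~\ref{cr4.6} — applicable because an isolated eigenvalue of an analytic Fredholm path is automatically algebraic by the analytic dichotomy of Section~2 — gives $\sigma_{\mathrm{FP}}(\mathfrak{L},\lambda_{0})=(-1)^{\chi[\mathfrak{L};\lambda_{0}]}$ (the case $\lambda_0\notin\Sigma(\mathfrak{L})$ being trivial, both sides equal to $1$). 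This establishes (N) and at the same time shows that the map described in the ``Moreover'' clause is precisely $\sigma_{\mathrm{FP}}$.

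For \emph{uniqueness}, let $\sigma:\mathscr{A}\to\mathbb{Z}_{2}$ satisfy (N), (P), (H). Given $(\mathfrak{L},[a,b])\in\mathscr{A}$, Theorem~\ref{th3.3} furnishes an analytic path $\mathfrak{L}_{\omega}\in\mathscr{C}^{\omega}([a,b],\Phi_{0}(X))$ which is $\mathcal{A}$-homotopic to $\mathfrak{L}$; the connecting homotopy is admissible in exactly the sense allowed by (H), so $\sigma(\mathfrak{L},[a,b])=\sigma(\mathfrak{L}_{\omega},[a,b])$, and it suffices to evaluate $\sigma$ on analytic admissible paths. Since $\mathfrak{L}_{\omega}(a),\mathfrak{L}_{\omega}(b)\in GL(X)$, the analytic dichotomy recalled in Section~2 forces $\Sigma(\mathfrak{L}_{\omega})=\{\lambda_{1}<\cdots<\lambda_{n}\}$ to be finite and contained in $\Alg(\mathfrak{L}_{\omega})$. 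I would then choose division points $a<e_{1}^{-}<\lambda_{1}<e_{1}^{+}<e_{2}^{-}<\lambda_{2}<\cdots<e_{n}^{+}<b$, with each pair $e_{i}^{\pm}$ placed symmetrically about $\lambda_{i}$ (so that $[e_{i}^{-},e_{i}^{+}]=[\lambda_{i}-\eta_{i},\lambda_{i}+\eta_{i}]$) and small enough that $\lambda_{i}$ is the only eigenvalue in $[e_{i}^{-},e_{i}^{+}]$. None of these points belongs to $\Sigma(\mathfrak{L}_{\omega})$, so iterating (P) factors $\sigma(\mathfrak{L}_{\omega},[a,b])$ into a product over the eigenvalue-free initial, gap and final intervals and the intervals $[e_{i}^{-},e_{i}^{+}]$. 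On an eigenvalue-free interval $[p,q]$ the restriction of $\mathfrak{L}_{\omega}$ lies in $\mathcal{C}^{\omega}_{(p+q)/2}([p,q],\Phi_{0}(X))$ with $\chi=0$ at the midpoint, so (N) yields parity $1$; on $[e_{i}^{-},e_{i}^{+}]$ the restriction lies in $\mathcal{C}^{\omega}_{\lambda_{i}}([\lambda_{i}-\eta_{i},\lambda_{i}+\eta_{i}],\Phi_{0}(X))$, so (N) yields $(-1)^{\chi[\mathfrak{L}_{\omega};\lambda_{i}]}$. Multiplying, $\sigma(\mathfrak{L}_{\omega},[a,b])=(-1)^{\sum_{i=1}^{n}\chi[\mathfrak{L}_{\omega};\lambda_{i}]}$, which by Theorem~\ref{th3.3} equals $\sigma_{\mathrm{FP}}(\mathfrak{L},[a,b])$; since also $\sigma(\mathfrak{L},[a,b])=\sigma(\mathfrak{L}_{\omega},[a,b])$ by (H), we conclude $\sigma=\sigma_{\mathrm{FP}}$ on all of $\mathscr{A}$.

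The genuinely routine parts are the combinatorics of the subdivision and checking that each restricted piece lands in the appropriate class $\mathcal{C}^{\omega}_{\lambda_{0}}$. The step I expect to demand the most care is the normalization check in the existence half — reconciling the interval-parity formulation of axiom (N) with the localized-parity identity of Corollary~\ref{cr4.6} via the $\eta$-independence argument — together with pinning down that the reduction supplied by Theorem~\ref{th3.3} is carried out through a homotopy admissible in precisely the sense axiom (H) permits.
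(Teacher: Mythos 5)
Your proposal is correct and follows essentially the same route as the paper: uniqueness is obtained by homotoping to an analytic path via Theorem \ref{th3.3} and axiom (H), using the analytic dichotomy to get a finite spectrum, factoring with (P) over symmetric intervals around each eigenvalue (the eigenvalue-free pieces contributing $1$ by (N) with $\chi=0$), and evaluating each factor with (N), which identifies $\sigma$ with the Fitzpatrick--Pejsachowicz parity through Theorem \ref{th3.3}. Your existence half, verifying (N) for the FP parity via the $\eta$-independence argument and Corollary \ref{cr4.6}, is only slightly more explicit than the paper, which leaves existence to the cited properties of the parity.
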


\begin{proof}
Pick $(\mathfrak{L},[a,b])\in\mathscr{A}$.  By Theorem \ref{th3.3}, we already know that there exists an analytic curve $\mathfrak{L}_{\omega}\in\mathscr{C}^{\omega}([a,b],\Phi_{0}(X))$ $\mathcal{A}$-homotopic to $\mathfrak{L}$, i.e., there exists $H\in\mathcal{C}([0,1]\times[a,b],\Phi_{0}(X))$ such that  $H([0,1]\times\{a,b\})\subset GL(X)$,
$$
  H(0,\lambda)=\mathfrak{L}(\lambda) \quad\hbox{and}\quad  H(1,\lambda)=\mathfrak{L}_\omega(\lambda)
  \quad \hbox{for all}\;\;\lambda \in [a,b].
$$
Then, by the axiom (H),
\begin{equation}
\label{d.2}
	\sigma(\mathfrak{L},[a,b])=\sigma(\mathfrak{L}_{\omega},[a,b]).
\end{equation}
Suppose that $\Sigma(\mathfrak{L}_{\omega})\cap [a,b]=\emptyset$ and pick any $\lambda_0\in (a,b)$. Then, since $\chi[\mathfrak{L}_\omega;\lambda_0]=0$, it follows from (N)
that
$$
  \sigma(\mathfrak{L}_{\omega},[a,b])=(-1)^{\chi[\mathfrak{L}_\omega;\lambda_0]}=1.
$$
Therefore, \eqref{d.2} implies that
\begin{equation*}
	\sigma(\mathfrak{L},[a,b])=1.
\end{equation*}
Now, suppose that $\Sigma(\mathfrak{L}_{\omega})\neq\emptyset$. Since $\mathfrak{L}_\omega(a)\in GL(X)$,
it follows from Theorems 4.4.1 and 4.4.4 of \cite{LG01} that $\Sigma(\mathfrak{L}_{\omega})$ is discrete.
Thus,
$$
   \Sigma(\mathfrak{L}_\omega)=\{\lambda_{1},\lambda_{2},....,\lambda_{n}\}
$$
for some
$$
   a<\lambda_1<\lambda_2<\cdots<\lambda_n<b.
$$
Let $\varepsilon>0$ be sufficiently small so that $\lambda_i$ is $\varepsilon$-isolated for all $i\in\{1,...,n\}$. Then, by the axioms (P) and (N), we find that
\begin{align*}	
\sigma(\mathfrak{L}_{\omega},[a,b]) & =
\prod_{i=1}^{n}\sigma(\mathfrak{L}_{\omega},[\lambda_{i}-\varepsilon,\lambda_{i}+\varepsilon])\\ & = \prod_{i=1}^n (-1)^{\chi[\mathfrak{L}_{\omega},\lambda_{i}]} =(-1)^{\sum_{i=1}^{n}\chi[\mathfrak{L}_{\omega};\lambda_{i}]}.
\end{align*}
Therefore, by \eqref{d.2}, it becomes apparent that
\begin{equation*}
	\sigma(\mathfrak{L},[a,b])=(-1)^{\sum_{i=1}^{n}\chi[\mathfrak{L}_{\omega};\lambda_{i}]}.
\end{equation*}
Consequently, owing to Theorem \ref{th3.3}, the map
$\sigma:\mathscr{A}\to\mathbb{Z}_{2}$ is the parity defined by Fitzpatrick and Pejsachowitz in \cite{FP2}. This concludes the proof.
\end{proof}

Note that the normalization property (N) in Theorem \ref{th4.2} is determined by the local uniqueness
of the parity provided by Theorem \ref{th4.1}.

\section{Axiomatization and Uniqueness of the Topological Degree}

The aim of this section is delivering the proof of Theorem \ref{th1.2}. We begin by recalling our main theorem. As already discussed in Section 1, for any open and bounded subset, $\Omega$, of a Banach space $X$, an operator $f:\overline{\Omega}\subset X \to Y$ is said to be $\mathcal{C}^{1}$-\emph{Fredholm of index zero} if $f\in \mathcal{C}^{1}(\overline{\Omega},Y)$ and $Df\in \mathcal{C}(\overline{\Omega},\Phi_{0}(X,Y))$, and the set of all these operators is denoted by $\mathscr{F}^{1}_{0}(\Omega,Y)$. An operator $f\in\mathscr{F}^{1}_{0}(\Omega,Y)$ is said to be \emph{orientable} when  $Df(\Omega)$ is an orientable subset of $\Phi_{0}(X,Y)$. Moreover, for any open and bounded subset, $\Omega$, of a Banach space $X$ and any operator $f:\overline{\Omega}\subset X \to Y$ satisfying 	
\begin{enumerate}
	\item $f\in \mathscr{F}^{1}_{0}(\Omega,Y)$ is \emph{orientable} with orientation $\varepsilon$,
	\item $f$ is \emph{proper} in $\overline{\Omega}$, i.e., $f^{-1}(K)$ is compact for every compact subset $K\subset Y$,
	\item $0\notin f(\partial \Omega)$,
\end{enumerate}
it is said that $(f,\Omega,\varepsilon)$ is a \emph{Fredholm admissible tern}. The set of all
Fredholm admissible tern is denoted by $\mathscr{A}$. Given $(f,\Omega,\varepsilon)\in \mathscr{A}$,
it is said that $(f,\Omega,\varepsilon)$ is a regular tern if $0$ is a regular value of $f$, i.e.,
$Df(x)\in GL(X,Y)$ for all $x \in f^{-1}(0)$. The set of regular terns is denoted by $\mathscr{R}$.
Lastly, a map $H\in \mathcal{C}^{1}([0,1]\times \overline{\Omega},Y)$ is said to be $\mathcal{C}^{1}$-\textit{Fredholm homotopy} if $D_{x}H(t,\cdot)\in\Phi_{0}(X,Y)$ for each $t\in[0,1]$ and it is called \textit{orientable} if $D_{x}H([0,1]\times\Omega)$ is an orientable subset of $\Phi_{0}(X,Y)$. Hereinafter, the notation $\varepsilon_{t}$ stands for the restriction of $\varepsilon$ to $D_{x}H(\{t\}\times\Omega)\cap GL(X,Y)$ for each $t\in[0,1]$. Theorem \ref{th1.2} reads as follows.

\begin{theorem}
	\label{th5.1}
	There exists a unique integer valued map $\deg: \mathscr{A}\to \mathbb{Z}$
	satisfying the next properties:
	\begin{enumerate}
\item[{\rm (N)}] \textbf{Normalization:} $\deg(L,\Omega,\varepsilon)=\varepsilon(L)$ for all
		$L\in GL(X,Y)$ if $0\in \Omega$.
		
\item[{\rm (A)}] \textbf{Additivity:} For every $(f,\Omega,\varepsilon)\in\mathscr{A}$  and any
		pair of disjoint open subsets $\Omega_{1}$ and $\Omega_{2}$ of $\Omega$ with $0\notin f(\Omega\backslash (\Omega_{1}\uplus \Omega_{2}))$,
\begin{equation*}
 \deg(f,\Omega,\varepsilon)=\deg(f,\Omega_{1},\varepsilon)+\deg(f,\Omega_{2},\varepsilon).
\end{equation*}
		
\item[{\rm (H)}] \textbf{Homotopy Invariance:} For each proper $\mathcal{C}^{1}$-Fredholm homotopy
		$H\in \mathcal{C}^{1}([0,1]\times \overline{\Omega}, Y)$ with orientation $\varepsilon$ and $(H(t,\cdot),\Omega,\varepsilon_{t})\in\mathscr{A}$ for each $t\in[0,1]$
\begin{equation*}
		\deg(H(0,\cdot),\Omega,\varepsilon_{0})=\deg(H(1,\cdot),\Omega,\varepsilon_{1}).
		\end{equation*}
\end{enumerate}

\noindent Moreover, for every $(f,\Omega,\varepsilon)\in\mathscr{R}$ satisfying $Df(\Omega)\cap GL(X,Y)\neq\emptyset$ and $L\in Df(\Omega)\cap GL(X,Y)$,
\begin{equation}
\label{5.1}
\deg(f,\Omega,\varepsilon)=\varepsilon(L)\cdot\sum_{x\in f^{-1}(0)\cap \Omega} (-1)^{\sum_{\lambda_{x}\in\Sigma(\mathfrak{L}_{\omega,x})}\chi[\mathfrak{L}_{\omega,x};\lambda_{x}]}
\end{equation}
where $\mathfrak{L}_{\omega,x}\in \mathscr{C}^{\omega}([a,b],\Phi_{0}(X,Y))$ is an analytical path $\mathcal{A}$-homotopic to some path $\mathfrak{L}_{x}\in\mathcal{C}([a,b],Df(\Omega))$ joining $Df(x)$ and $L$.
\end{theorem}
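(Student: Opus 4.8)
The plan is to establish existence and uniqueness separately, with uniqueness being the genuinely new contribution and existence being a reference to \cite{FPR} combined with the formula \eqref{5.1} of \cite{LSA}. For \emph{existence}, I would simply recall that Fitzpatrick, Pejsachowicz and Rabier constructed in \cite{FPR} a map $\deg:\mathscr{A}\to\mathbb{Z}$ satisfying (N), (A) and (H), and that the authors proved in \cite{LSA} that this degree is given by the generalized Leray--Schauder formula \eqref{5.1} on regular terns meeting $GL(X,Y)$. So the content to be written is the \emph{uniqueness}: any map $\deg':\mathscr{A}\to\mathbb{Z}$ satisfying (N), (A), (H) must coincide with the one from \cite{FPR}. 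The strategy mirrors the Amann--Weiss scheme: reduce to regular terns via Sard--Smale, reduce regular terns to a sum of local degrees via additivity/excision, and compute each local degree by a homotopy to a linear isomorphism, where the parity axiomatization of Section~4 (Theorems~\ref{th4.1} and \ref{th4.2}, together with Corollary~\ref{cr4.6}) pins down the sign.

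The key steps, in order, are as follows. \emph{Step 1 (Reduction to regular terns).} Given an arbitrary $(f,\Omega,\varepsilon)\in\mathscr{A}$, use the Sard--Smale theorem to find a regular value $y$ of $f$ arbitrarily close to $0$, with $0\notin f(\partial\Omega)$ forcing $y\notin f(\partial\Omega)$ as well; then $f-y$ is a regular tern, and a straight-line (or suitably small) $\mathcal{C}^1$-Fredholm homotopy from $f$ to $f-y$ stays admissible, so by axiom (H), $\deg'(f,\Omega,\varepsilon)=\deg'(f-y,\Omega,\varepsilon)$. Properness of $f$ guarantees $f^{-1}(0)\cap\overline\Omega$ is compact and, on a regular tern, discrete, hence finite. \emph{Step 2 (Localization).} Enclose each $x_j\in f^{-1}(0)\cap\Omega$ in a small open ball $B_j\subset\Omega$ with pairwise disjoint closures and $0\notin f(\overline\Omega\setminus\biguplus_j B_j)$; axiom (A) (with the excision consequence drawn exactly as in Section~1) gives $\deg'(f,\Omega,\varepsilon)=\sum_j\deg'(f,B_j,\varepsilon)$. \emph{Step 3 (Computing a single local degree).} Fix $x_j$ and shrink $B_j$ so that $Df(x)\in GL(X,Y)$ for all $x\in\overline{B_j}$; then the affine homotopy $H(t,x)=f(x_j)+Df(x_j)(x-x_j)+t\,[\,f(x)-f(x_j)-Df(x_j)(x-x_j)\,]$, or rather $f$ connected to its linearization $L_j:=Df(x_j)$ at $x_j$, is an orientable proper $\mathcal{C}^1$-Fredholm homotopy on $\overline{B_j}$ keeping $0$ off the boundary for $B_j$ small; by (H), $\deg'(f,B_j,\varepsilon)=\deg'(L_j(\cdot-x_j),B_j,\varepsilon)$, and since $x_j\in B_j$, the normalization axiom (N) gives this value as $\varepsilon(L_j)\in\{\pm1\}$. \emph{Step 4 (Rewriting $\varepsilon(L_j)$ via the parity).} Choose $L\in Df(\Omega)\cap GL(X,Y)$ and a path $\mathfrak{L}_x\in\mathcal{C}([a,b],Df(\Omega))$ joining $L_j=Df(x_j)$ to $L$; since $Df(\Omega)$ is orientable with orientation $\varepsilon$, Definition~\ref{de3.5} gives $\varepsilon(L_j)\cdot\varepsilon(L)=\sigma(\mathfrak{L}_x,[a,b])$. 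By Theorem~\ref{th3.3} the path is $\mathcal{A}$-homotopic to an analytic curve $\mathfrak{L}_{\omega,x}$ with $\sigma(\mathfrak{L}_x,[a,b])=(-1)^{\sum_{\lambda_x\in\Sigma(\mathfrak{L}_{\omega,x})}\chi[\mathfrak{L}_{\omega,x};\lambda_x]}$. Hence $\varepsilon(L_j)=\varepsilon(L)\cdot(-1)^{\sum_{\lambda_x}\chi[\mathfrak{L}_{\omega,x};\lambda_x]}$. Summing over $j$ yields precisely formula \eqref{5.1} for $\deg'$, so $\deg'=\deg$ on $\mathscr{R}$ with $Df(\Omega)\cap GL(X,Y)\neq\emptyset$, and then Steps~1--2 propagate this to all of $\mathscr{A}$ (the case $Df(\Omega)\cap GL(X,Y)=\emptyset$ being handled by noting that then $f^{-1}(0)\cap\Omega=\emptyset$ on a regular tern, so both degrees vanish by the excision consequence of (A)).

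The main obstacle I anticipate is \emph{Step 1 and the legality of the homotopies in Steps 1 and 3}: one must check that the Sard--Smale perturbation and the linearization homotopy are genuinely \emph{orientable} proper $\mathcal{C}^1$-Fredholm homotopies carrying a consistent orientation $\varepsilon$ with $(H(t,\cdot),\Omega,\varepsilon_t)\in\mathscr{A}$ for every $t$ — properness is not homotopy-stable in general, so one needs the smallness of $B_j$ and a local properness argument (e.g.\ $Df$ bounded away from $\mathcal{S}(X,Y)$ on $\overline{B_j}$ makes $f$ a local diffeomorphism there, hence trivially proper), and orientability follows because $D_xH([0,1]\times B_j)$ lies in a small, hence simply connected, neighborhood of $L_j$ in $\Phi_0(X,Y)$, which is orientable by \cite{FPR}. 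A secondary delicate point is the path-dependence in Step~4: different choices of $L$ or of the joining path must give the same sign, which is exactly guaranteed by the orientability identity \eqref{c.3} together with the $\mathcal{A}$-homotopy invariance of the parity, so this reduces to citing Section~3 rather than to new work. Once these admissibility verifications are in place, the computation is a clean bookkeeping of signs.
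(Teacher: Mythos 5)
Your proposal follows the same route as the paper: Sard--Smale reduction to regular terns via the translation homotopy $f-tx_0$, localization by additivity, linearization homotopy on small balls, and conversion of $\varepsilon(Df(x_j))$ into the multiplicity formula via \eqref{c.5} and Theorem \ref{th3.3}. The admissibility checks you flag as the "main obstacle" are indeed where the paper spends its effort (Lemmas \ref{le5.2} and \ref{le5.3}, using Smale's local properness theorem and a compactness covering argument, plus Theorem 3.9.2 of \cite{ZR} for the translation homotopies).

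There is, however, one step in your Step 3 that does not follow as written: you claim that "since $x_j\in B_j$, the normalization axiom (N) gives" $\deg'(L_j(\cdot-x_j),B_j,\varepsilon)=\varepsilon(L_j)$. Axiom (N) applies only to a \emph{linear} isomorphism $L$ on a domain containing the \emph{origin} of $X$; the pair you have at this stage is an affine map $L_j(\cdot-x_j)$ on a ball centred at $x_j$, which need not contain $0$, so (N) is not directly applicable. The paper closes this gap with a second homotopy $G_i(t,x)=Df(x_i)(x-tx_i)$ on the enlarged domain $\Pi=\bigcup_{t\in[0,1]}B_{\tau_i}(tx_i)$, followed by two applications of the excision consequence of (A), to trade $\deg(Df(x_i)(\cdot-x_i),B_{\tau_i}(x_i),\varepsilon)$ for $\deg(Df(x_i),B_{\tau_i}(0),\varepsilon)$, to which (N) genuinely applies. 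This is a repairable omission rather than a wrong approach, but as stated your Step 3 invokes the normalization axiom outside its hypotheses, and the translation homotopy together with its own admissibility and orientation checks must be supplied.
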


Applying axiom (A) with $\Omega_1=\Omega_2=\Omega =\emptyset$, it becomes apparent that $\mathrm{deg}(f,\emptyset,\varepsilon)=0$. If $(f,\Omega,\varepsilon)\in\mathscr{A}$ and $f^{-1}(0)\cap\Omega=\emptyset$, applying again (A) with $\Omega_1=\Omega_2=\emptyset$, we find that $\mathrm{deg}(f,\Omega,\varepsilon)=0$. Equivalently, $f$ admits a zero in
$\Omega$ if   $\mathrm{deg}(f,\Omega,\varepsilon)\neq 0$.
\par
As already commented in Section 1, as the existence of  $\deg$
goes back to Fitzpatrick, Pejsachowicz and Rabier \cite{FPR}, and the formula \eqref{5.1}
was proven  by the authors in \cite{LSA}, Theorem \ref{th5.1} actually establishes the uniqueness of $\deg$.
\par
To prove the uniqueness, it is appropriate to sketch briefly the construction of $\deg$ carried over in \cite{FPR}. Let  $(f,\Omega,\varepsilon)\in\mathscr{A}$. By definition, $f\in\mathscr{F}^{1}_{0}(\Omega,Y)$ is $\mathcal{C}^{1}$-Fredhom of index zero and it is $\varepsilon$-orientable, i.e., $Df(\Omega)$ is an orientable subset of $\Phi_{0}(X,Y)$ with orientation
$$
\varepsilon:Df(\Omega)\cap GL(X,Y) \longrightarrow \mathbb{Z}_{2}.
$$
Once an orientation has been defined in $Df(\Omega)$,  the degree $\deg$ can be defined
as the Leray--Schauder degree $\deg_{LS}$ as soon as $0$ is a regular value of $f$, because since in such case $f^{-1}(0)\cap \Omega$ is finite, possibly empty, one can define, in complete agreement with the axioms (N), (A) and (H),
\begin{equation}
\label{5.2}
\deg(f,\Omega,\varepsilon):=\sum_{x\in f^{-1}(0)\cap \Omega} \varepsilon(Df(x)).
\end{equation}
If $f^{-1}(0)\cap \Omega=\emptyset$, as we already mentioned, $\mathrm{deg}(f,\Omega,\varepsilon)=0$. When $0$ is not a regular value, then, by definition,
\begin{equation*}
\deg(f,\Omega,\varepsilon):=\deg(f-x_{0},\Omega,\varepsilon)
\end{equation*}
where $x_{0}$ is any regular value of $f$ lying in a sufficiently small neighborhood of $0$. The existence of such regular values is guaranteed by a theorem of Quinn and Sard \cite{QS}, a version of the Sard--Smale Theorem, \cite{Sm},  not requiring the separability of the involved Banach spaces.
\par
It is worth-mentioning that $\deg$ extends $\deg_{LS}$ to this more general setting if we consider the Leray--Schauder admissible pairs to be in $\mathscr{A}_{LS}^{1}$. Indeed,
$\mathcal{L}_{c}(X)$ is simply connected and hence, according to \cite{FPR}, orientable. Moreover, the maps $\varepsilon^{\pm}: GL_{c}(X)\to\mathbb{Z}_{2}$ defined by
\begin{equation}
\label{5.3}
\varepsilon^{\pm}(L)=\pm\deg_{LS}(L,\Omega),
\end{equation}
where the right hand of equation \eqref{5.3} is defined by \eqref{b.4}, determine the two orientations of $\mathcal{L}_{c}(X)$. Therefore, if for every  Leray--Schauder regular pair $(f,\Omega)$ the orientation of $f$ is chosen as the restriction of $\varepsilon^{+}$ to $Df(\Omega)\cap GL(X)$, then $(f,\Omega,\varepsilon^{+})\in\mathscr{R}$ and, thanks to \eqref{5.2} and \eqref{5.3},
\begin{align*}
\mathrm{deg}_{LS}(f,\Omega) & =\sum_{x\in f^{-1}(0)\cap \Omega} \mathrm{deg}_{LS}(Df(x),\Omega)\\ & =\sum_{x\in f^{-1}(0)\cap \Omega}\varepsilon^{+}(Df(x))=\deg(f,\Omega,\varepsilon^{+})
\end{align*}
and therefore,
\begin{equation*}
\deg(f,\Omega,\varepsilon^{+})=\deg_{LS}(f,\Omega),
\end{equation*}
as claimed above. We have all necessary ingredients to prove Theorem \ref{th5.1}. It suffices to prove the uniqueness.

\begin{proof}[Proof of the Uniqueness]
We first prove that, for every $(f,\Omega,\varepsilon)\in\mathscr{R}$, the topological degree is given by \eqref{5.1} if $Df(\Omega)\cap GL(X,Y)\neq\emptyset$ and $\deg(f,\Omega,\varepsilon)=0$ if $Df(\Omega)\cap GL(X,Y)=\emptyset$. Pick $(f,\Omega,\varepsilon)\in\mathscr{R}$. Then, $f^{-1}(0)\cap\Omega$ is finite, possibly empty. If it is empty, then, applying axiom (A) with $\Omega_1=\Omega_2=\Omega =\emptyset$, it becomes apparent that $\mathrm{deg}(f,\emptyset,\varepsilon)=0$. Thus, applying again (A) with $\Omega_1=\Omega_2=\emptyset$, we find that
$$
  \mathrm{deg}(f,\Omega,\varepsilon)=0.
$$
If $Df(\Omega)\cap GL(X,Y)=\emptyset$, necessarily $f^{-1}(0)\cap\Omega=\emptyset$ and therefore $\deg(f,\Omega,\varepsilon)=0$ as required. Now suppose that, for some $n\geq 1$ and $x_i\in\Omega$, $i\in\{1,...,n\}$,
$$
  f^{-1}(0)\cap\Omega=\{x_{1},x_{2},...,x_{n}\}.
$$
Since $0$ is a regular value of $f$, by the inverse function theorem, $f\vert_{B_{\eta_{i}}(x_{i})}$ is a diffeomorphism for each $i\in\{1,2,...,n\}$ for sufficiently small $\eta_{i}>0$ and therefore by axiom (A)
\begin{equation}
\label{tete}
	\deg(f,\Omega,\varepsilon)=\sum_{i=1}^{n}\deg(f,B_{\eta_{i}}(x_{i}),\varepsilon).
\end{equation}
Subsequently, we fix $i\in\{1,2,...,n\}$ and consider the homotopy $H_{i}$ defined by
\begin{equation*}
	\begin{array}{lccl}
	H_{i}: & [0,1]\times B_{\eta_{i}}(x_{i}) & \longrightarrow & Y \\
	 & (t,x) & \mapsto & tf(x)+(1-t)Df(x_{i})(x-x_{i}).
	\end{array}
\end{equation*}
The next result of technical nature holds.

\begin{lemma}
\label{le5.2}
$H_{i}\in\mathcal{C}^{1}([0,1]\times\overline{B_{\tau_i}(x_i)},Y)$ and it is proper for sufficiently small
$\tau_i>0$.
\end{lemma}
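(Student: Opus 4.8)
The plan is to handle the two claims separately: the $\mathcal{C}^{1}$-regularity is immediate, while properness rests on a uniform local Lipschitz bound near $x_{i}$. Throughout I write $L_{i}:=Df(x_{i})$, which lies in $GL(X,Y)$ because $0$ is a regular value of $f$ and $x_{i}\in f^{-1}(0)$.

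For regularity I would first pick $\tau_{i}<\eta_{i}$, so that $\overline{B_{\tau_{i}}(x_{i})}\subset B_{\eta_{i}}(x_{i})\subseteq\Omega$. On that set $H_{i}(t,x)=tf(x)+(1-t)L_{i}(x-x_{i})$ is a combination, with coefficients affine in $t$, of the $\mathcal{C}^{1}$ map $f$ and the affine map $x\mapsto L_{i}(x-x_{i})$, hence $H_{i}\in\mathcal{C}^{1}([0,1]\times\overline{B_{\tau_{i}}(x_{i})},Y)$. Shrinking $\tau_{i}$ once more and using the continuity of $Df$ together with the openness of $GL(X,Y)$, one moreover gets $D_{x}H_{i}(t,x)=tDf(x)+(1-t)L_{i}\in GL(X,Y)\subset\Phi_{0}(X,Y)$ for all $(t,x)$; this last point is not part of the statement, but is what later turns $H_{i}$ into an admissible Fredholm homotopy to which (H) applies.

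For properness I would set $R(x):=f(x)-L_{i}(x-x_{i})$, a $\mathcal{C}^{1}$ map with $R(x_{i})=0$ and $DR(x_{i})=Df(x_{i})-L_{i}=0$, so that $H_{i}(t,x)=L_{i}(x-x_{i})+tR(x)$. By continuity of $Df$ I can further shrink $\tau_{i}$ so that $\sup_{x\in\overline{B_{\tau_{i}}(x_{i})}}\|Df(x)-L_{i}\|<\kappa$ for a fixed $\kappa<\frac{1}{2\|L_{i}^{-1}\|}$; the mean value inequality on the convex ball then makes $R$ Lipschitz there with constant $\kappa$, in particular $\|R(x)\|\le\kappa\tau_{i}$. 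The estimate I am after, for any two points $(t,x),(s,y)$ of $[0,1]\times\overline{B_{\tau_{i}}(x_{i})}$, is obtained by applying $L_{i}^{-1}$ to $H_{i}(t,x)-H_{i}(s,y)$, bounding $\|tR(x)-sR(y)\|\le|t-s|\,\kappa\tau_{i}+\kappa\|x-y\|$, and absorbing the resulting term $\|L_{i}^{-1}\|\kappa\|x-y\|<\frac12\|x-y\|$:
\[
	\tfrac12\|x-y\|\ \le\ \|L_{i}^{-1}\|\,\bigl(\|H_{i}(t,x)-H_{i}(s,y)\|+\kappa\tau_{i}\,|t-s|\bigr).
\]
Properness then follows by a sequential argument: for compact $C\subset Y$ and $(t_{n},x_{n})\in H_{i}^{-1}(C)$, pass to a subsequence with $t_{n}\to t^{\ast}$ in $[0,1]$ and $H_{i}(t_{n},x_{n})\to y^{\ast}$ in $C$; the displayed inequality forces $(x_{n})$ to be Cauchy, so $x_{n}\to x^{\ast}\in\overline{B_{\tau_{i}}(x_{i})}$ and $(t_{n},x_{n})\to(t^{\ast},x^{\ast})$ with $H_{i}(t^{\ast},x^{\ast})=y^{\ast}\in C$; hence $H_{i}^{-1}(C)$ is sequentially compact, so compact.

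The main (essentially the only) obstacle is securing the bound $\sup_{\overline{B_{\tau_{i}}(x_{i})}}\|Df-L_{i}\|<\frac{1}{2\|L_{i}^{-1}\|}$ on a \emph{fixed} closed ball — exactly what forces one to shrink $\eta_{i}$ to a smaller $\tau_{i}$ — and keeping the Lipschitz constant small enough for the absorption step to work; everything else is bookkeeping. I would also note that the properness of $f$ on $\overline{\Omega}$ granted by $(f,\Omega,\varepsilon)\in\mathscr{A}$ plays no role here: the invertibility of $L_{i}$ alone supplies all the compactness, the estimate reducing compactness of $H_{i}^{-1}(C)$ to that of $[0,1]$ and of $C$.
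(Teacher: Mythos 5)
Your proof is correct, but it takes a genuinely different route from the paper's. The paper deduces from $D_xH_i(t,x)=Df(x_i)+t(Df(x)-Df(x_i))\in GL(X,Y)$ that $DH_i$ is Fredholm of index one on $\mathbb{R}\times X$, invokes Smale's local properness theorem (Theorem 1.6 of \cite{Sm}) to get properness of $H_i$ on small products $\overline{\mathcal{I}(t)}\times\overline{B_{\delta_t}(x_i)}$, and then covers the compact set $[0,1]\times\{x_i\}$ by finitely many of these to obtain properness on $[0,1]\times\overline{B_{\tau_i}(x_i)}$. You instead exploit directly that $L_i=Df(x_i)$ is an isomorphism: writing $H_i(t,x)=L_i(x-x_i)+tR(x)$ with $R$ having small Lipschitz constant $\kappa<\tfrac{1}{2\|L_i^{-1}\|}$ on a shrunken ball, you absorb the perturbation to get the coercive estimate $\tfrac12\|x-y\|\le\|L_i^{-1}\|\bigl(\|H_i(t,x)-H_i(s,y)\|+\kappa\tau_i|t-s|\bigr)$, from which properness follows by a Cauchy-sequence argument (your estimate is correct: split $tR(x)-sR(y)=t(R(x)-R(y))+(t-s)R(y)$ and use $\|R(y)\|\le\kappa\tau_i$). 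Your argument is more elementary and self-contained — it avoids the appeal to the Smale/Quinn--Sard machinery entirely and, as a bonus, shows each section $H_i(t,\cdot)$ is injective with continuous inverse on its image, which is stronger than properness. What the paper's route buys in exchange is generality: local properness via \cite{Sm} needs only the Fredholm property of $DH_i$, not invertibility of $Df(x_i)$; but since the lemma is applied only at regular zeros $x_i$, where $Df(x_i)\in GL(X,Y)$ is guaranteed, your hypothesis is available and the argument is complete.
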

\begin{proof}
Obviously, $H_{i}\in\mathcal{C}^{1}([0,1]\times B_{\eta_{i}}(x_{i}),Y)$ and
\begin{equation*}
	D_{x}H_{i}(t,\cdot)=tDf(\cdot)+(1-t)Df(x_{i})=Df(x_{i})+t(Df(\cdot)-Df(x_{i})).
\end{equation*}
Since $Df\in\mathcal{C}(\Omega,\Phi_{0}(X,Y))$, $Df(x_{i})\in GL(X,Y)$, and $GL(X,Y)$ is open,  we have that, for sufficiently small $\eta_i>0$,
\begin{equation}
\label{tata}
   D_{x}H_{i}(t,B_{\eta_{i}}(x_{i}))\subset GL(X,Y) \quad \hbox{for all} \;\, t \in [0,1].
\end{equation}
In particular, $D_{x}H_{i}(t,x)\in \Phi_0(X,Y)$ for all $(t,x)\in [0,1]\times B_{\eta_{i}}(x_{i})$.
Thus, by definition, $H_{i}(t,\cdot)\in\mathscr{F}^{1}_{0}(\Omega,Y)$ for all $t\in [0,1]$. This also entails that $DH_{i}$ is a Fredholm operator of index one from $\mathbb{R}\times X$ to $Y$. Thus, by Theorem 1.6 of Smale \cite{Sm}, $H_{i}$ is locally proper, i.e., for every $t\in[0,1]$, there exists an open interval containing $t$, $\mathcal{I}(t)\subset [0,1]$, and an open ball centered in $x_{i}$ with radius $\delta_{t}$, $B_{\delta_t}(x_i)$, such that $H_{i}$ is proper in $\overline{\mathcal{I}(t)}\times\overline{B_{\delta_t}(x_i)}$. In particular
\begin{equation*}
	[0,1]\times\{x_{i}\}\subset \bigcup_{t\in[0,1]}\mathcal{I}(t)\times B_{\delta_t}(x_i).
\end{equation*}
Since $[0,1]\times\{x_{i}\}$ is compact, there exists a finite subset $\{t_{1},t_{2},...,t_{n}\}\subset[0,1]$ such that
\begin{equation*}
	[0,1]\times\{x_{i}\}\subset \bigcup_{j=1}^{n}\mathcal{I}(t_{j})\times B_{\delta_{_{t_j}}}(x_{i}),
\end{equation*}
as illustrated in Figure \ref{Fig5}.
Let
$$
   \delta_{i}:=\min\{\delta_{t_{1}},\delta_{t_{2}},...,\delta_{t_{n}}\}.
$$
Then,
\begin{equation*}
	[0,1]\times\{x_{i}\}\subset [0,1]\times B_{\delta_{i}}(x_{i}).
\end{equation*}
Let $\tau_{i}<\min\{\eta_{i},\delta_{i}\}$. Then, $H_{i}$ is proper in $\overline{\mathcal{I}(t)}\times\overline{B_{\tau_{i}}(x_{i})}$ since the restriction of a proper map to a closed subset is also proper. On the other hand, since
\begin{equation*}
[0,1]\times\overline{B_{\tau_{i}}(x_{i})}=\bigcup_{i=1}^{n}\overline{\mathcal{I}(t_{i})}\times\overline{B_{\tau_{i}}(x_{i})}
\end{equation*}
and each $\overline{\mathcal{I}(t_{i})}\times\overline{B_{\tau_{i}}(x_{i})}$ is closed, necessarily $H_{i}$ is proper on $[0,1]\times\overline{B_{\tau_{i}}(x_{i})}$. Therefore, $$
  H_{i}\in\mathcal{C}^{1}([0,1]\times\overline{B_{\tau_{i}}(x_{i})},Y)
$$
and it is proper. The proof is complete.
\end{proof}
	
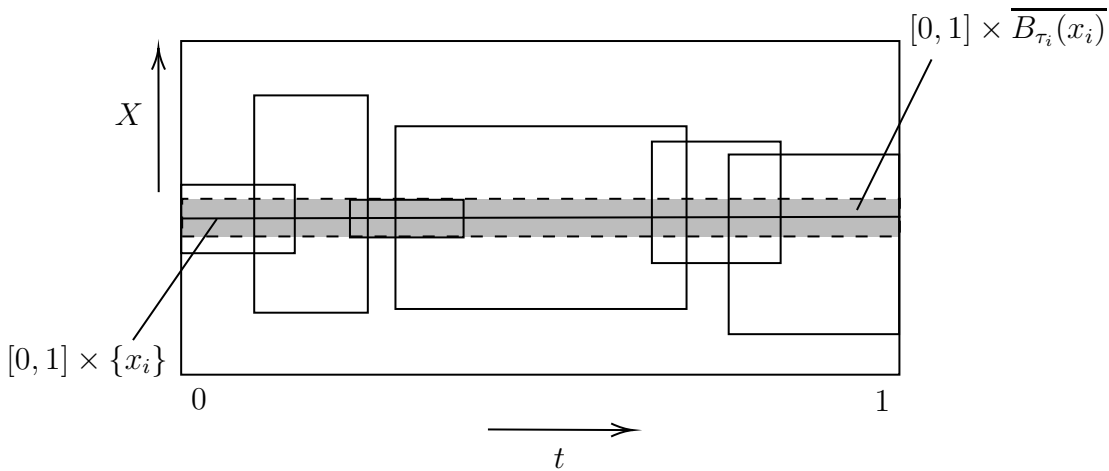
\begin{figure}
	\begin{center}

		\tikzset{every picture/.style={line width=0.75pt}} 
		
		\begin{tikzpicture}[x=0.75pt,y=0.75pt,yscale=-1,xscale=1]
		
		\draw   (124.08,91.36) -- (482.44,91.36) -- (482.44,259.21) -- (124.08,259.21) -- cycle ;
		\draw    (124.23,180.63) -- (483,179.77) ;

		\draw   (124.08,163.66) -- (180.77,163.66) -- (180.77,198.1) -- (124.08,198.1) -- cycle ;
		\draw   (160.52,118.71) -- (217.21,118.71) -- (217.21,228.03) -- (160.52,228.03) -- cycle ;
		\draw   (208.3,171.22) -- (264.99,171.22) -- (264.99,190.16) -- (208.3,190.16) -- cycle ;
		\draw   (231,134.24) -- (376.23,134.24) -- (376.23,226.16) -- (231,226.16) -- cycle ;
		\draw   (358.94,141.95) -- (423.18,141.95) -- (423.18,203.07) -- (358.94,203.07) -- cycle ;
		\draw   (397.26,148.45) -- (482.3,148.45) -- (482.3,238.83) -- (397.26,238.83) -- cycle ;
		\draw  [fill={rgb, 255:red, 0; green, 0; blue, 0 }  ,fill opacity=0.26 ][dash pattern={on 4.5pt off 4.5pt}] (124.58,170.73) -- (482.65,170.73) -- (482.65,189.67) -- (124.58,189.67) -- cycle ;
		\draw    (498.49,100.63) -- (461.24,176.38) ;

		\draw    (100.04,241.81) -- (141.89,180.88) ;

		\draw    (112.87,167.39) -- (112.74,96.8) ;
		\draw [shift={(112.74,94.8)}, rotate = 449.89] [color={rgb, 255:red, 0; green, 0; blue, 0 }  ][line width=0.75]    (10.93,-3.29) .. controls (6.95,-1.4) and (3.31,-0.3) .. (0,0) .. controls (3.31,0.3) and (6.95,1.4) .. (10.93,3.29)   ;
		
		\draw    (277.14,286.76) -- (347.55,287.08) ;
		\draw [shift={(349.55,287.09)}, rotate = 180.26] [color={rgb, 255:red, 0; green, 0; blue, 0 }  ][line width=0.75]    (10.93,-3.29) .. controls (6.95,-1.4) and (3.31,-0.3) .. (0,0) .. controls (3.31,0.3) and (6.95,1.4) .. (10.93,3.29)   ;

		\draw (536.39,84.19) node    {$[ 0,1] \times \overline{B_{\tau _{i}}( x_{i})}$};
		\draw (77.01,252.19) node    {$[ 0,1] \times \{x_{i}\}$};
		\draw (132.94,271.12) node    {$0$};
		\draw (473.89,272.12) node    {$1$};
		\draw (98.16,128.37) node    {$X$};
		\draw (312.77,301.39) node    {$t$};

		\end{tikzpicture}
	\end{center}

		\caption{Scheme of the construction.}
		\label{Fig5}
\end{figure}

\begin{lemma}
\label{le5.3}
 $0\notin H_{i}(t,\partial B_{\tau_{i}}(x_i))$ for each $t\in[0,1]$ and sufficiently small $\tau_i>0$.
\end{lemma}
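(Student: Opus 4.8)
The plan is to obtain a lower bound for $\|H_{i}(t,x)\|$ on a small sphere centered at $x_{i}$ that is uniform in $t\in[0,1]$, and then read off the conclusion. First I would record the two facts already available from the proof of Lemma~\ref{le5.2}: one has $D_{x}H_{i}(t,x)=Df(x_{i})+t\bigl(Df(x)-Df(x_{i})\bigr)$ with $Df(x_{i})\in GL(X,Y)$, and, since $f(x_{i})=0$, the basepoint is a zero of the whole homotopy, $H_{i}(t,x_{i})=t f(x_{i})=0$ for every $t\in[0,1]$. Writing $c_{i}:=\|Df(x_{i})^{-1}\|^{-1}>0$, so that $\|Df(x_{i})v\|\ge c_{i}\|v\|$ for all $v\in X$, and using the continuity of $Df$ at $x_{i}$, I would shrink $\tau_{i}$, below the radius furnished by Lemma~\ref{le5.2} and below $\eta_{i}$, so that $\|Df(x)-Df(x_{i})\|<\tfrac12 c_{i}$ for all $x\in\overline{B_{\tau_{i}}(x_{i})}$.

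The main estimate would then come from the fundamental theorem of calculus on the (convex) ball: for $x\in\overline{B_{\tau_{i}}(x_{i})}$ and $t\in[0,1]$,
$$
H_{i}(t,x)=H_{i}(t,x)-H_{i}(t,x_{i})=\int_{0}^{1}D_{x}H_{i}\bigl(t,x_{i}+s(x-x_{i})\bigr)(x-x_{i})\,ds .
$$
Peeling off the leading linear term $Df(x_{i})(x-x_{i})$ and bounding the remainder by the smallness of $\|Df(\cdot)-Df(x_{i})\|$ on the ball gives
$$
\|H_{i}(t,x)\|\ge \|Df(x_{i})(x-x_{i})\|-\tfrac12 c_{i}\|x-x_{i}\|\ge c_{i}\|x-x_{i}\|-\tfrac12 c_{i}\|x-x_{i}\|=\tfrac12 c_{i}\|x-x_{i}\| .
$$
In particular $\|H_{i}(t,x)\|\ge \tfrac12 c_{i}\tau_{i}>0$ whenever $\|x-x_{i}\|=\tau_{i}$, which is exactly $0\notin H_{i}(t,\partial B_{\tau_{i}}(x_{i}))$ for every $t\in[0,1]$.

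The step I expect to be the only genuinely delicate one is making the bound uniform in $t$: estimating the norm of the integrand inside the integral naively goes the wrong way, so one must instead isolate the $t$\nobreakdash-independent invertible part $Df(x_{i})(x-x_{i})$ (which is bounded below by $c_{i}\|x-x_{i}\|$) and absorb the $t$\nobreakdash-dependent correction into an arbitrarily small error by choosing $\tau_{i}$ small. An essentially equivalent alternative would be a compactness argument: were the conclusion to fail, there would be sequences $t_{k}\to t_{*}$, $x_{k}\to x_{i}$ with $\|x_{k}-x_{i}\|=\tau_{k}\downarrow 0$ and $H_{i}(t_{k},x_{k})=0$; dividing by $\tau_{k}$ and using $f(x_{k})=Df(x_{i})(x_{k}-x_{i})+o(\tau_{k})$ would force $Df(x_{i})$ to annihilate a limit of unit vectors, contradicting $Df(x_{i})\in GL(X,Y)$.
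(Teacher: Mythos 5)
Your argument is correct and is essentially the paper's: both decompose $H_i(t,x)=Df(x_i)(x-x_i)+t\bigl[f(x)-Df(x_i)(x-x_i)\bigr]$, bound the invertible linear part below by $c_i\|x-x_i\|$ (the paper uses $m=\inf_{\|v\|=1}\|Df(x_i)v\|>0$), and show the $t$-weighted remainder is $o(\|x-x_i\|)$ uniformly in $t\in[0,1]$. The only difference is cosmetic: you control the remainder via the fundamental theorem of calculus and continuity of $Df$ on a small ball, whereas the paper invokes the definition of the Fr\'echet derivative at $x_i$ directly and phrases the conclusion as a contradiction; both are valid here since $f$ is $\mathcal{C}^1$.
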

\begin{proof}
On the contrary, assume that $0\in H_{i}(t,\partial B_{\tau_{i}}(x_i))$ for some $t\in[0,1]$ and $\tau_i<\eta_i$, i.e., there exists $x\in\partial B_{\tau_{i}}(x_i)$ such that $H_{i}(t,x)=0$. Thus,
\begin{equation}
\label{5.4}
  t[f(x)-Df(x_{i})(x-x_{i})]+Df(x_{i})(x-x_{i})=0.
\end{equation}
Necessarily $t>0$, because $t=0$ implies $Df(x_{i})(x-x_{i})=0$ and in such case $Df(x_i)$ cannot be an
isomorphism. So, dividing \eqref{5.4} by $t$ yields
\begin{equation*}
	f(x)-Df(x_{i})(x-x_{i})=-\frac{1}{t}Df(x_{i})(x-x_{i}).
\end{equation*}
Taking norms and dividing by $\|x-x_{i}\|>0$, we obtain that
\begin{equation}
\label{5.5} 	 \frac{\|f(x)-Df(x_{i})(x-x_{i})\|}{\|x-x_{i}\|}=\frac{1}{t}\Big\|Df(x_{i})\Big(\frac{x-x_{i}}{\|x-x_{i}\|}
\Big)\Big\|\geq \frac{1}{t}\inf_{\|x\|=1}\|Df(x_{i})(x)\|.
\end{equation}
Since $Df(x_{i})\in GL(X,Y)$ and $\partial B_{1}(x_i)$ is closed, $Df(x_{i})(\partial B_{1}(x_i))$ is closed. Hence,
$$
  m\equiv \inf_{\|x\|=1}\|Df(x_{i})(x)\|
$$
is attained and, since $0\notin Df(x_{i})(\partial B_{1}(x_i))$, we find that $m>0$ and therefore,
it follows from \eqref{5.5} that
\begin{equation}
\label{5.6}
	\frac{\|f(x)-Df(x_{i})(x-x_{i})\|}{\|x-x_{i}\|}\geq \frac{m}{t}>0.
\end{equation}
On the other hand, since $f$ is differentiable at $x_i$ and $f(x_{i})=0$
\begin{equation*}
	\frac{\|f(x)-Df(x_{i})(x-x_{i})\|}{\|x-x_{i}\|}\xrightarrow[x\to x_{i}]{}0.
\end{equation*}
Thus, for sufficiently small $\tau_{i}>0$ we have that
\begin{equation*}
	\frac{\|f(x)-Df(x_{i})(x-x_{i})\|}{\|x-x_{i}\|}<\frac{m}{t},
\end{equation*}
which contradicts \eqref{5.6} and ends the proof.
\end{proof}

On the other hand, since, by construction,  $f\vert_{B_{\eta_{i}}(x_{i})}$ is a diffeomorphism, necessarily  $Df(B_{\eta_{i}}(x_{i}))$ is a path connected subset of $GL(X,Y)$ and hence, since the restricted orientation
\begin{equation}\label{ll}
   \varepsilon|_{Df(B_{\eta_{i}}(x_{i}))}: \; Df(B_{\eta_{i}}(x_{i}))\cap GL(X,Y)\longrightarrow\mathbb{Z}_{2}
\end{equation}
is always constant in each path connected component of its domain, it is actually constant. Denote its constant value by $\varepsilon_{0}$.  Subsequently, we consider the map
\begin{equation*}
	\begin{array}{lccl}
	\varepsilon^{H_{i}}: & D_xH_i([0,1]\times B_{\eta_{i}}(x_{i}))& \longrightarrow & \mathbb{Z}_{2} \\
	 & T & \mapsto & \varepsilon_{0}
	\end{array}
\end{equation*}
Note that, thanks to \eqref{tata}, for each $t\in [0,1]$
$$
  D_xH_i(t,B_{\eta_{i}}(x_{i}))=t Df(B_{\eta_{i}}(x_{i}))+(1-t)Df(x_i)\subset GL(X,Y)
$$
is  a wedge in $GL(X,Y)$. Consequently, $D_xH_i([0,1]\times B_{\eta_{i}}(x_{i}))$ is a path connected subset of $GL(X,Y)$ and hence $\varepsilon^{H_{i}}$ provides us with an orientation of $H_{i}$.
Therefore, thanks to Lemmas \ref{le5.2} and \ref{le5.3}, it becomes apparent that $H_{i}$ is a proper $\mathcal{C}^{1}$-Fredholm homotopy with orientation $\varepsilon^{H_{i}}$ and $0\notin H_{i}([0,1]\times\partial B_{\tau_{i}}(x_{i}))$ for sufficiently small $\tau_i>0$. Moreover, $\varepsilon^{H_{i}}_{t}(\equiv\varepsilon_{0})$ provides us with an orientation of the section $H_{i}(t,\cdot)$ and therefore $(H_{i}(t,\cdot),\Omega,\varepsilon^{H_{i}}_{t})\in\mathscr{A}$ for each $t\in[0,1]$. By the axiom (H), and taking into account that $\varepsilon^{H_{i}}_{j}= \varepsilon|_{Df(B_{\eta_{i}}(x_{i}))}(\equiv\varepsilon_{0})$ for each $j\in\{0,1\}$,
\begin{equation}
\label{E1}	 \deg(f,B_{\tau_{i}}(x_{i}),\varepsilon)=\deg(Df(x_{i})(\cdot-x_{i}),B_{\tau_{i}}(x_{i}),\varepsilon).
\end{equation}
In order to remove the affine term in \eqref{E1}, we consider the homotopy
\begin{equation*}
	\begin{array}{lccl}
	G_{i}: & [0,1]\times \overline{\Pi} & \longrightarrow & Y \\
	 & (t,x) & \mapsto & Df(x_{i})(x-tx_{i})
	\end{array}
\end{equation*}
where $\Pi=\bigcup_{t\in[0,1]}B_{\tau_{i}}(tx_{i})$. Obviously, $G_{i}\in\mathcal{C}^{1}([0,1]\times \overline{\Pi},Y)$. Moreover, since, for every $t\in [0,1]$,  $G_{i}(t,\cdot)=Df(x_{i})(\cdot-t x_{i})$ is a diffeomorphism, we have that $G_{i}(t,\cdot)$ is proper for each $t\in[0,1]$. Therefore, since $G_{i}$ is uniformly continuous in $t$, it follows from Theorem 3.9.2 of \cite{ZR} that $G_{i}$ is proper.
\par
As $G_{i}(t,\cdot)=Df(x_{i})(\cdot-tx_{i})$ is a diffeomorphism for each $t\in [0,1]$ and $G_{i}(t,tx_{i})=0$, it is obvious that
$$
   0\notin G_{i}(t,\partial \Pi)\quad \hbox{for all} \;\; t\in[0,1].
$$
Moreover, since, for every $t\in [0,1]$,
$$
   D_x G_{i}(t,\cdot)=Df(x_{i})\in GL(X,Y),
$$
it is apparent that
$$
  D_{x}G_{i}([0,1]\times \Pi)=\{Df(x_{i})\}\subset GL(X,Y)
$$
and if we choose the orientation
$$
   \varepsilon^{G_{i}}: \{Df(x_{i})\}\longrightarrow\mathbb{Z}_{2}
$$
to be $\varepsilon^{G_{i}}\equiv \varepsilon_{0}$, $G_{i}$ is a proper $\mathcal{C}^{1}$-Fredholm homotopy with orientation $\varepsilon^{G_{i}}$ such that $0\notin G_{i}([0,1]\times\partial \Pi)$. Moreover, since $D_{x}G_{i}(\{t\}\times \Pi)\cap GL(X,Y)=\{Df(x_{i})\}\neq \emptyset$ for each $t\in[0,1]$, $\varepsilon^{G_{i}}_{t}(\equiv\varepsilon_{0})$ provides us with an orientation of the section $G_{i}(t,\cdot)$ and therefore $(G_{i}(t,\cdot),\Pi,\varepsilon^{G_{i}}_{t})\in\mathscr{A}$ for each $t\in[0,1]$. Thanks to the axiom (H) and taking into account that $\varepsilon^{G_{i}}_{j}=\varepsilon|_{Df(B_{\tau_{i}}(x_{i}))}(\equiv\varepsilon_{0})$ for each $j\in\{0,1\}$, we find that
\begin{align*}
\deg(Df(x_{i}),\Pi,\varepsilon) & = \deg(G_{i}(0,\cdot),\Pi,\varepsilon)=\deg(G_{i}(1,\cdot),\Pi,\varepsilon)\\ & =
\deg(Df(x_{i})(\cdot-x_{i}),\Pi,\varepsilon).
\end{align*}
Since
	$$
	F_{t}(x)=Df(x_{i})(x-tx_{i}), \quad x\in\Pi,
	$$
	is a diffeomorphism for each $t\in\{0,1\}$ and $F_{t}(tx_{i})=0$, we have that $0\notin F_{t}(\Pi\backslash B_{\tau_{i}}(tx_{i}))$. Thus, applying the axiom (A) with $\Omega=\Pi$, $\Omega_1=B_{\tau_i}(tx_i)$ and $\Omega_2=\emptyset$, it becomes apparent that
	\begin{equation*}
	\deg(Df(x_{i}),\Pi,\varepsilon)=\deg(Df(x_{i}),B_{\tau_{i}}(0),\varepsilon)
	\end{equation*}
	and
	\begin{equation*} \deg(Df(x_{i})(\cdot-x_{i}),\Pi,\varepsilon)=\deg(Df(x_{i})(\cdot-x_{i}),B_{\tau_{i}}(x_{i}),\varepsilon).
	\end{equation*}
	Therefore by the last argument,
$$
\deg(Df(x_{i})(\cdot-x_{i}),B_{\tau_{i}}(x_{i}),\varepsilon)=
\deg(Df(x_{i}),B_{\tau_{i}}(0),\varepsilon).
$$
Consequently, by the axiom (N)
\begin{equation}\label{E3}
\deg(Df(x_{i})(\cdot-x_{i}),B_{\tau_{i}}(x_{i}),\varepsilon)=
\deg(Df(x_{i}),B_{\tau_{i}}(0),\varepsilon)=\varepsilon(Df(x_{i})).
\end{equation}
Thus, combining \eqref{E1} and \eqref{E3} yields
\begin{equation*}
\deg(f,B_{\tau_{i}}(x_{i}),\varepsilon)=\varepsilon(Df(x_{i}))
\end{equation*}
and therefore, by \eqref{tete},
\begin{equation}
\label{titi}
\deg(f,\Omega,\varepsilon)=\sum_{i=1}^{n}\varepsilon(Df(x_{i})).
\end{equation}
Now, since $f^{-1}(0)\cap\Omega\neq \emptyset$ and $0$ is a regular value, necessarily $Df(\Omega)\cap GL(X,Y)\neq \emptyset$. Take $L\in Df(\Omega)\cap GL(X,Y)$. Then, according to \eqref{c.5},
$$
   \varepsilon(Df(x_{i}))=\varepsilon(L)\cdot\sigma(\mathfrak{L}_{x_{i}},[a,b]),
$$
where $\mathfrak{L}_{x_{i}}\in\mathcal{C}([a,b],Df(\Omega))$ is a Fredholm path linking $Df(x_{i})$ with $L$. By Theorem \ref{th3.3}, for any analytical curve $\mathfrak{L}_{\omega,x_{i}}\in\mathscr{C}^{\omega}([a,b],\Phi_{0}(X,Y))$ $\mathcal{A}$-homotopic to $\mathfrak{L}_{x_{i}}$
\begin{equation*}
\varepsilon(Df(x_{i}))=\varepsilon(L)\cdot\sigma(\mathfrak{L}_{x_{i}},[a,b])=\varepsilon(L)\cdot
(-1)^{\sum_{\lambda_{x}\in\Sigma(\mathfrak{L}_{\omega,x_{i}})}\chi[\mathfrak{L}_{\omega,x_{i}};\lambda_{x}]}.
\end{equation*}
Therefore, by \eqref{titi},
\begin{equation*}
\deg(f,\Omega,\varepsilon)=\varepsilon(L)\cdot\sum_{i=1}^{n} (-1)^{\sum_{\lambda_{x}\in\Sigma(\mathfrak{L}_{\omega,x_{i}})}
\chi[\mathfrak{L}_{\omega,x_{i}};\lambda_{x}]},
\end{equation*}
which ends the proof of the theorem in the regular case. We have actually proven that in the regular case
any map satisfying the axioms (N), (A) and (N) must coincide with the degree of  Fitzpatrick, Pejsachowicz and Rabier \cite{FPR}.
\par
If the general case when $(f,\Omega,\varepsilon)\notin\mathscr{R}$, for every $\eta>0$, by a theorem of  Quinn and Sard, \cite{QS}, there exists a regular value $x_{0}$ such that $\|x_{0}\|<\eta$. Let $H$ be the homotopy defined by
\begin{equation*}
\begin{array}{lccl}
H: & [0,1]\times\overline{\Omega} & \longrightarrow & Y \\
 & (t,x) & \mapsto & f(x)-tx_{0}
\end{array}
\end{equation*}
Then, $H\in\mathcal{C}^{1}([0,1]\times\overline{\Omega},Y)$ and it is proper. Obviously,
$H\in\mathcal{C}^{1}([0,1]\times\overline{\Omega},Y)$ and
$$
   D_xH(t,\cdot)=Df(\cdot)\in \Phi_{0}(X,Y).
$$
First, we will prove that $H(t,\cdot)$ is proper for each $t\in[0,1]$. By the definition of $H$,
for any compact subset, $K$, of $Y$, 	
\begin{equation*}
	H(t,\cdot)^{-1}(K)=f^{-1}(K+tx_{0})
\end{equation*}
is compact because  $K+tx_{0}$ is compact and $f$ proper. Therefore, as the map $H$ is uniformly continuous in $t$, as above, it follows from Theorem 3.9.2 of \cite{ZR} that $H$ is proper.
\par
Now, we will show that $0\notin H(t,\partial\Omega)$ for each $t\in[0,1]$. On the contrary, suppose that $0\in H(t,\partial\Omega)$ for some $t\in[0,1]$. Then, there exists $x\in\partial\Omega$ such that $f(x)=tx_{0}$. In particular, $tx_{0}\in f(\partial\Omega)$. Since $f$ is proper, by Lemma 3.9.1 of \cite{ZR}, $f$ is a closed map and since $\partial\Omega$ is closed, $f(\partial\Omega)$ is closed. Since $0\notin f(\partial\Omega)$ and $f(\partial\Omega)$ is closed, there exists $\eta>0$ such that $B_{\eta}(0)\cap f(\partial\Omega)=\emptyset$. As we have already taken $\|x_{0}\|<\eta$, we also have that $tx_{0}\in B_{\eta}(0)$ and therefore $tx_{0}\notin f(\partial\Omega)$. This contradicts $tx_{0}\in f(\partial\Omega)$.
\par
Since $D_xH(\{t\}\times\Omega)=Df(\Omega)$ for each $t\in[0,1]$, $D_{x}H([0,1]\times\Omega)=Df(\Omega)$ and if we define $\varepsilon^{H}: Df(\Omega)\cap GL(X,Y)\to\mathbb{Z}_{2}$ by $\varepsilon^{H}\equiv\varepsilon$, where $\varepsilon$ is the orientation of $Df(\Omega)$ given by \eqref{ll}, then $H$ is a proper $\mathcal{C}^{1}$-Fredholm homotopy with orientation $\varepsilon^{H}$ and $0\notin H([0,1]\times \partial\Omega)$. Observe that in this case the domain is the whole $\Omega$ and therefore the orientation $\varepsilon$ might not be, in general, constant. Moreover, since $D_xH(\{t\}\times\Omega)=Df(\Omega)$ and $Df(\Omega)$ is orientable with orientation $\varepsilon(=\varepsilon^{H}_{t})$ for each $t\in[0,1]$, necessarily $(H(t,\cdot),\Omega,\varepsilon^{H}_{t})\in\mathscr{A}$ for each $t\in[0,1]$. Owing to the axiom (H) and taking into account that $\varepsilon^{H}_{j}=\varepsilon$ for each $j\in\{0,1\}$
\begin{equation*}
\deg(f,\Omega,\varepsilon)=\deg(f-x_{0},\Omega,\varepsilon).
\end{equation*}

\noindent Since $x_{0}$ is a regular value of $f$, we have that $0$ is a regular value of $f-x_{0}$ and $(f-x_{0},\Omega,\varepsilon)\in\mathscr{R}$. This reduces the case to the regular one discussed previously and proves that the map $\deg:\mathscr{A}\to\mathbb{Z}$ coincides with the one constructed by Fitzpatrick, Pejsachowicz and Rabier \cite{FPR}. This concludes the proof.
\end{proof}

\end{document}